\numberwithin{equation}{section}
\newtheorem{theorem}{Theorem}[section]
\newtheorem{lemma}[theorem]{Lemma}
\newtheorem{proposition}[theorem]{Proposition}
\newtheorem{remark}[theorem]{Remark}
\newtheorem{definition}[theorem]{Definition}
\renewcommand{\tilde}{\widetilde}
\newcommand{\cF}{{\ensuremath{\mathcal F}} }
\newcommand{\cE}{{\ensuremath{\mathcal E}} }
\newcommand{\cC}{{\ensuremath{\mathcal C}} }
\newcommand{\cN}{{\ensuremath{\mathcal N}} }
\newcommand{\cM}{{\ensuremath{\mathcal M}} }
\newcommand{\cO}{{\ensuremath{\mathcal O}} }
\newcommand{\cD}{{\ensuremath{\mathcal D}} }
\newcommand{\cU}{{\ensuremath{\mathcal U}} }
\newcommand{\cZ}{{\ensuremath{\mathcal Z}} }
\newcommand{\bP}{{\ensuremath{\mathbf P}} }
\newcommand{\bE}{{\ensuremath{\mathbf E}} }
\newcommand{\bL}{{\ensuremath{\mathbf L}} }
\DeclareMathSymbol{\leqslant}{\mathalpha}{AMSa}{"36} 
\DeclareMathSymbol{\geqslant}{\mathalpha}{AMSa}{"3E} 
\DeclareMathSymbol{\eset}{\mathalpha}{AMSb}{"3F}     
\renewcommand{\leq}{\;\leqslant\;}                   
\renewcommand{\geq}{\;\geqslant\;}                   
\newcommand{\dd}{\,\text{\rm d}}             
\DeclareMathOperator*{\Supp}{Supp}
\DeclareMathOperator{\proj}{proj}
\newcommand{\bbC}{{\ensuremath{\mathbb C}} }
\newcommand{\bbE}{{\ensuremath{\mathbb E}} }
\newcommand{\bbR}{{\ensuremath{\mathbb R}} }
\newcommand{\bbT}{{\ensuremath{\mathbb T}} }
\newcommand{\bbZ}{{\ensuremath{\mathbb Z}} }
\newcommand{\ga}{\alpha}
\newcommand{\gd}{\delta}
\newcommand{\gep}{\varepsilon}       
\newcommand{\gz}{\zeta}
\newcommand{\go}{\omega}
\newcommand{\gO}{\Omega}
\newcommand{\gl}{\lambda}
\newcommand{\gs}{\sigma}
\def\captionfont@{\footnotesize}
\def\captionheadfont@{\scshape}
\long\def\@makecaption#1#2{%
  \vspace{2mm}
  \setbox\@tempboxa\vbox{\color@setgroup
    \advance\hsize-6pc\noindent
    \captionfont@\captionheadfont@#1\@xp\@ifnotempty\@xp
        {\@cdr#2\@nil}{.\captionfont@\upshape\enspace#2}%
    \unskip\kern-6pc\par
    \global\setbox\@ne\lastbox\color@endgroup}%
  \ifhbox\@ne 
    \setbox\@ne\hbox{\unhbox\@ne\unskip\unskip\unpenalty\unkern}%
  \fi
  \ifdim\wd\@tempboxa=\z@ 
    \setbox\@ne\hbox to\columnwidth{\hss\kern-6pc\box\@ne\hss}%
  \else 
    \setbox\@ne\vbox{\unvbox\@tempboxa\parskip\z@skip
        \noindent\unhbox\@ne\advance\hsize-6pc\par}%
\fi
  \ifnum\@tempcnta<64 
    \addvspace\abovecaptionskip
    \moveright 3pc\box\@ne
  \else 
    \moveright 3pc\box\@ne
    \nobreak
    \vskip\belowcaptionskip
  \fi
\relax
}
\def\writefig#1 #2 #3 {\rlap{\kern #1 truecm
\raise #2 truecm \hbox{#3}}}
\DeclareMathOperator{\Span}{Span}
\title[Disorder-induced traveling waves in the stochastic Kuramoto model]
{Long time dynamics and disorder-induced traveling waves in the stochastic Kuramoto model}
\author{Eric Lu\c{c}on}
\address{Laboratoire MAP5 (UMR CNRS 8145), Universit\'e Paris Descartes, Sorbonne Paris Cit\'e, 75270 Paris, France, \url{eric.lucon@paridescartes.fr}.
}
\author{Christophe Poquet}
\address{
Universit\`a degli Studi di Roma Tor Vergata,
Dipartimento di matematica,
I-00133 Roma, Italia,
\url{poquet@mat.uniroma2.it }.
}
\keywords{Kuramoto synchronization model - mean-field particle systems - disordered models - nonlinear Fokker-Planck PDE - long time dynamics - traveling waves - stochastic partial differential equations}
\subjclass[2010]{60K35, 37N25, 82C26, 82C31, 82C44, 92B20}
\date{\today}
\begin{document}

\begin{abstract}
The aim of the paper is to address the long time behavior of the Kuramoto model of mean-field coupled phase rotators, subject to white noise and quenched frequencies. We analyse the influence of the fluctuations of both thermal noise and frequencies (seen as a disorder) on a large but finite population of $N$ rotators, in the case where the law of the disorder is symmetric. On a finite time scale $[0, T]$, the system is known to be self-averaging: the empirical measure of the system converges as $N\to \infty$ to the deterministic solution of a nonlinear Fokker-Planck equation which exhibits a stable manifold of synchronized stationary profiles for large interaction. On longer time scales, competition between the finite-size effects of the noise and disorder makes the system deviate from this mean-field behavior. In the main result of the paper we show that on a time scale of order $ \sqrt{N}$ the fluctuations of the disorder prevail over the fluctuations of the noise: we establish the existence of disorder-
induced traveling waves for the empirical measure along the stationary manifold. This result is proved for fixed realizations of the disorder and emphasis is put on the influence of the asymmetry of these quenched frequencies on the direction and speed of rotation of the system. Asymptotics on the drift are provided in the limit of small disorder.

\end{abstract}

\maketitle


\section{Introduction}
\subsection{Long time dynamics of mean-field interacting particle systems}
The macroscopic behavior of numerous stochastic interacting particle systems appearing in physics or biology is usually described by nonlinear partial differential equations. In this context, systems of diffusions in all-to-all interactions, that is \emph{mean-field particle systems} \cite{McKean1967,Oelsch1984}, have attracted much attention in the past years, since they are relevant in many situations from statistical physics (synchronization of oscillators \cite{Acebron2005,Kuramoto1975,Strogatz2000}) to biology (emergence of synchrony in neural networks \cite{22657695, Bossy:2014fk}) and have provided particle approximations for various PDEs (see \cite{Malrieu2003, MR1410117} and references therein). From a statistical physics point of view, a natural extension of these models concerns similar particle systems in a random environment, that is when the particles obey to the influence of an additional randomness, or \emph{disorder}, 
representing inhomogeneous behaviors between 
particles. Such a modeling is particularly relevant in a biological context, where each particle/diffusion captures the state of one single individual (activity of a neuron, phase in a circadian rythm) and the disorder models intrinsic dynamical behavior for each individual (e.g. inhibition or excitation in populations of heterogeneous neurons \cite{22657695,Bossy:2014fk}).

The aim of the paper is to address the influence of the disorder on the long time dynamics of a large but finite population of mean-field interacting diffusions with noise. A crucial aspect in this perspective is the notion of \emph{self-averaging}: in the limit of a large number of individuals and/or on a long time scale (in a way that needs to be made precise), is the macroscopic behavior of the system the same for every typical realization of the disorder?  If not, is it possible to quantify the influence of the fluctuations of the random environment on the behavior of the system?

It appears that the analysis of such mean-field systems differs significantly depending on the time scale one considers. On a time scale of order $1$ (w.r.t. the size of the population), it is now well-known that the macroscopic behavior of mean-field particle systems are well described by nonlinear PDEs of McKean-Vlasov type \cite{Gartner,Oelsch1984}. A vast literature exists on the links between the microscopic system and its mean-field limit (fluctuations, large deviations and finite time dynamics) mostly in the non-disordered case (see e.g. \cite{Fernandez1997,MR876258,MR865013} and references therein) but also for disordered systems \cite{daiPra96, Lucon2011}.

When one considers longer time scales (w.r.t. the size of the population) and for a large but finite number of particles, some randomness remains in the system so that Brownian fluctuations generally induce microscopic dynamics that may differ significantly from the dynamics of the mean-field equation. For mean-field systems without disorder, a vast literature exists concerning fluctuations induced by thermal noise. In this respect, the notion of \emph{uniform propagation of chaos} has been addressed for several mean-field models by many authors (see e.g. \cite{Malrieu2003,MR2731396} for the granular media equation or \cite{Jourdain:2013aa, Reygner:2014ta} for ranked-based models). In case the mean-field PDE admits an isolated stable fixed point, due to large deviation phenomena, the finite-size system exits from any neighborhood of the fixed point at exponential times in $N$ ($N$ being the size of the population) \cite{DawGar,cf:OV}, whereas in case of an unstable fixed point, the system escapes at a time 
scale of order $\log N$ \cite{cf:Errico}. Fewer results exist in the case where the mean-field PDE admits a whole stable curve of stationary solutions. In \cite{Bertini:2013aa,dahms}, the effect of thermal noise is considered for the mean-field plane rotators model \cite{BGP}
 which is known to admit in the limit as $N\to\infty$ a stable circle of stationary solutions. In this case, the finite size particle system has Brownian fluctuations on time scales of order~$N$.

In the case of disordered systems, we are not aware of any similar analysis on long time dynamics of mean-field interacting particles. The present work could be seen as a first result in this direction. In particular, we provide in Theorem~\ref{th:main} a rigorous and quantitative justification to a phenomenon already observed by Balmforth and Sassi \cite{Balmforth2000} on the basis of numerical simulations.

\subsection{The stochastic Kuramoto model with disorder}
We address in this paper the long time behavior of the Kuramoto model with noise and disorder, which describes the evolution of a population of rotators (the $j^\text{th}$ rotator being defined by its phase $\varphi^{ \omega}_j(t)\in \bbT:=\bbR/2\pi \bbZ$), given by the system of $N\geq1$ stochastic differential equations of mean-field type
\begin{equation}
\label{eq:eds_Kur_general}
 \dd \varphi^\go_j(t)\,
 =\, \gd\go_j\dd t -\frac{K}{N} \sum_{l=1}^{N} \sin(\varphi^\go_j(t)-\varphi^\go_l(t))\dd t +\gs \dd B_j(t),\  j=1, \ldots, N,\ t\geq0\, ,
\end{equation}
where $(B_j)_{j=1,\ldots,N}$ is a family of standard independent Brownian motions, $K$, $\gs$ and $\gd$ are positive parameters. In particular, $ \delta>0$ is a scaling parameter. The main result will be stated for small $ \delta>0$, as it relies on perturbation results of the case where $ \delta=0$.

The Kuramoto model \cite{Acebron2005,Kuramoto1975,Strogatz2000} is the main prototype for synchronization phenomena and, due to its mathematical tractability, has been studied in details in the past years \cite{BGP,collet,MR3207725,GPP2012}.
\begin{remark}
Note that \eqref{eq:eds_Kur_general} is invariant by rotation: if $(\varphi^\go_j(t))_{j=1,\ldots,N}$ solves \eqref{eq:eds_Kur_general}, then so does $(\varphi^\go_j(t)+\ga)_{j=1,\ldots,N}$ for all $\ga\in \bbR$. Moreover, by the change of variables $t \to t/\sigma^{ 2}$, one can get rid of the coefficient $ \sigma$ in front of the Brownian motions (up to the obvious modifications $\gd \to \delta/ \sigma^{ 2}$ and $K \to K/ \sigma^{ 2}$). Hence, with no loss of generality, we suppose $\gs=1$ in the following.
\end{remark}
Following the point of view adopted at the beginning of this introduction, the system \eqref{eq:eds_Kur_general} presents two types of noise: in addition to the thermal noise $(B_{ j})$, the \emph{disorder} in \eqref{eq:eds_Kur_general} is given by a sequence $(\go_j)_{j=1,\ldots,N}$ of i.i.d random variables with distribution~$ \lambda$, independent from the Brownian motions. Each $ \omega_{ j}$ represents an intrinsic inhomogeneous frequency for the rotator $ \varphi_{ j}^{ \omega}$.  The index $\go$ in the notation $\varphi_j^\go$ is used to emphasize the dependency of the system in the disorder. 

A crucial aspect in the understanding of the dynamics of \eqref{eq:eds_Kur_general} concerns the (possible lack of) symmetry of the sequence $( \omega_{ j})_{ j\geq1}$. First note that, by the obvious change of variables $ \varphi_{ j}^{ \omega}(t) \mapsto \varphi_{ t}^{ \omega}(t) - \bbE( \omega) t$ in \eqref{eq:eds_Kur_general}, it is always possible to assume that the expectation of the disorder $\bbE( \omega)= \int_{ \bbR} \omega \lambda(\dd \omega)$ is zero (otherwise, we observe macroscopic traveling waves with speed $\bbE( \omega)$). The asymmetry of the disorder can be given at different scales. The most simple situation corresponds to a \emph{macroscopic asymmetry}, that is when the law $ \lambda$ itself is asymmetric. With no loss of generality, we can for example assume that, on a macroscopic level, a majority of rotators will be associated to a positive frequency whereas a minority will have negative frequencies. In the limit of an infinite population, this asymmetry makes the whole system rotate 
at a constant speed that depends only on the law $ \lambda$ and this rotation is noticeable at the scale of the nonlinear Fokker-Planck equation \eqref{eq:FP_Kur_general} associated to \eqref{eq:eds_Kur_general}. This case has been the object of a previous paper (see \cite{MR3207725}, Theorem~2.2 and Section~\ref{sec:links_existing_models} below).

The present paper is concerned with the situation where the law of the disorder is symmetric. Here, the previous argument cannot be applied since in the limit as $N\to \infty$, the population is equally balanced between positive and negative frequencies: the macroscopic speed of rotation found in \cite{MR3207725}, Theorem~2.2 vanishes. Hence, the analysis of long time dynamics of \eqref{eq:eds_Kur_general} requires a deeper understanding of the \emph{microscopic asymmetry} of the disorder, that is the finite-size fluctuations of the disorder w.r.t. the thermal noise. An informal description of the dynamics of \eqref{eq:eds_Kur_general} is the following (see Figure~\ref{fig:simu} below): if the constant $K$ is sufficiently large, the mean-field coupling term leads to synchronization of the whole system along a nontrivial density. Even if $ \lambda$ is symmetric, finite-size fluctuations of the sample $( \omega_{ j})_{ j=1, \ldots, N}$ make it \emph{not} symmetric so that the fluctuations of the disorder 
compete with 
the fluctuations of the Brownian motions $(B_{ j})_{ j=1, \ldots, N}$ and make the whole system rotate with speed and direction depending on the fixed realization of the disorder $(\omega_{ j})$ (and not only on the law $ \lambda$ itself). The main point of the paper is to give a rigorous meaning to this phenomenon, noticed numerically in \cite{Balmforth2000}: we will show that at times of order $\sqrt{N}$, the dynamics of \eqref{eq:eds_Kur_general} deviates from its mean-field limit, with the apparition of synchronized traveling waves induced by the finite-size fluctuations of the disorder. We refer to Paragraph~\ref{sec:longtime_intro} below for a precise description of this phenomenon.

We present in the following subsections some well-known properties of \eqref{eq:eds_Kur_general} which are needed to state our result. We describe in particular its infinite population limit on bounded time intervals and the existence of stationary measures for the limit system in case of symmetric disorder.

\subsection{Mean-field limit on bounded time intervals}
All the statistical information of \eqref{eq:eds_Kur_general} is contained in the empirical measure $(\mu^\go_{N,t})_{t\geq0}\in C([0,\infty),\cM_{ 1}(\bbT\times \bbR))$ ($\cM_{ 1}$ being the set of probability measures endowed with its weak topology) defined as
\begin{equation}\label{eq:def_mu_general}
 \mu^\go_{N,t}\, :=\, \frac{ 1}{ N}\sum_{j=1}^N \gd_{(\varphi^\go_j(t),\go_j)},\ t\geq0\, .
\end{equation}
When the distribution $ \lambda$ of the disorder satisfies $\int |\go| \lambda(\dd\go)<\infty$ and the initial condition $\mu^\go_{N,0}$ converges weakly to some $ p_0$ when $N\rightarrow \infty$, it is easy to see (\cite{daiPra96,Lucon2011}) that the empirical measure \eqref{eq:def_mu_general} converges weakly on bounded time intervals (that is in $C([0,T],\cM_{ 1}(\bbT\times \bbR))$ for all $T\geq0$) to a deterministic limit measure whose density $p_{ t}$ with respect $\ell\otimes \lambda$ (where $\ell$ denotes the Lebesgue measure on $\bbT$) satisfies the following system of nonlinear Fokker-Planck PDEs:
\begin{equation}\label{eq:FP_Kur_general}
 \partial_t p_t(\theta,\go)\, =\, \frac12 \partial^2_\theta p_t(\theta,\go)- \partial_\theta\Big(p_t(\theta,\go)\big(\langle J*p_t\rangle_{ \lambda}(\theta)+\gd\go\big)\Big),\ \omega\in\Supp(\lambda),\ \theta\in\bbT,\ t\geq0\, ,
\end{equation}
where 
\begin{equation}
\label{eq:def_J}
J(\theta)\, :=\, -K\sin(\theta)\, ,
\end{equation} and $\langle \cdot\rangle_{ \lambda}$ represents the integration with respect to $ \lambda$: $\langle J*u\rangle_{ \lambda}(\theta)=\int_\bbR \int_\bbT J(\psi) u(\theta-\psi,\go)\dd\psi \lambda(\dd\go) $. We insist on the fact that in \eqref{eq:FP_Kur_general}, $\go$ is a real number in the support of $ \lambda$, while in \eqref{eq:eds_Kur_general} and \eqref{eq:def_mu_general}, it is an index emphasizing the dependency in the disorder of the system.

Some properties of system \eqref{eq:FP_Kur_general} are detailed in \cite{MR3207725}. In particular, if $\gl$-almost surely, $p_0(\cdot,\go)$ is a probability measure then \eqref{eq:FP_Kur_general} admits a unique solution $p_t$ for all $t>0$ such that $\gl$-almost surely, $p_t(\cdot,\go)$ is also a probability measure, with positive density with respect to the Lebesgue measure and is an element of $C^\infty((0,\infty)\times \bbT,\bbR)$.

\subsection{Symmetric disorder}
\label{sec:finite_disorder}
As already mentioned, we consider the case where the law $ \lambda$ of the disorder is \emph{symmetric}. We restrict our analysis to finite disorder: fix $d\geq1$ and suppose that the frequencies $(\go_j)_{ j\geq1}$ take their values in $\{\go^{-d},\go^{-(d-1)},\ldots,\go^{d-1},\go^d\}$, where $\go^i=-\go^{-i}$ for all $i=0, \ldots, d$. We denote as $ (\lambda^{ i}\in [0, 1], i=-d, \ldots, d)$ the probability of drawing each $ \omega^{ i}$ and assume that $ \lambda^{ i}= \lambda^{-i}$ for all $i=1, \ldots, d$. From now on, the law of the disorder $ \lambda$ is identified with $( \lambda^{-d},\ldots, \lambda^{ d})$.
Note that we may suppose in the following that $\omega_{ 0}=0\not\in\Supp( \lambda)$. The result still holds with obvious changes in notations.

Under this hypothesis, almost surely, for sufficiently large $N$, each possible value $\go^i$ of the disorder appears at least once and we can rewrite \eqref{eq:eds_Kur_general} by regrouping the rotators into $(2d+1)$ sub-populations: for all $i=-d, \ldots, d$, denote as $N^{ i}$ the number of rotators $(\varphi^i_j(t))_{j=1,\ldots,N^i}$ with frequency $ \omega^{ i}$. Obviously, $N=\sum_{ i=-d}^{ d} N^{ i}$ and the system \eqref{eq:eds_Kur_general} becomes
\begin{equation}\label{eq:eds_Kur}
 \dd \varphi^i_j(t)\, =\, \gd\go^i\dd t 
 -\frac{K}{N}\sum_{k=-d}^d \sum_{l=1}^{N^k} \sin(\varphi^i_j(t)-\varphi^k_l(t))\dd t +\dd B^i_j(t)\, ,\ j=1,\ldots, N^{ i},\ i=-d, \ldots, d\, .
\end{equation}
In this framework, the empirical measure $\mu^\go_{N,t}$ in \eqref{eq:def_mu_general} can be identified with $(\mu^{-d}_{N,t},\ldots,\mu^{d}_{N,t})$, where $ \mu^{ i}_{ N}$ is the empirical measure of the rotators with frequency $ \omega^{ i}$:
\begin{equation}\label{eq:def_mu_finite}
 \mu^i_{N,t}\, =\, \frac{1}{N^i} \sum_{j=1}^{N^i} \gd_{\varphi^i_j(t)},\ t\geq0,\ i=-d,\ldots,d\, ,
\end{equation}
and its mean-field limit \eqref{eq:FP_Kur_general} can be identified with $p_{t}=(p^{-d}_t,\ldots,p^d_t)$, solution to
\begin{equation}\label{eq:FP_Kur_finite}
 \partial_t p^i_t(\theta)\, =\, \frac12 \partial^2_\theta p^i_t(\theta)- \partial_\theta\left(p^i_t(\theta)\left(\sum_{k=-d}^d \lambda^k J*p^k_t (\theta)+\gd\go^i\right)\right),\ t\geq0,\ i=-d, \ldots, d\, .
\end{equation}
\subsection{Stationary solutions and phase transition}
\label{sec:stationary_solutions_disorder}
A remarkable aspect of the Kuramoto model is that one can compute semi-explicitly the stationary solutions of \eqref{eq:FP_Kur_finite}, when $ \lambda$ is symmetric (see e.g. \cite{Sakaguchi1988}): each stationary solution to \eqref{eq:FP_Kur_finite} is the rotation of a profile $q=(q^{-d},\ldots,q^d)$ (i.e. given by $q(\cdot+\ga)$ for some $\ga\in \bbT)$) of the form
\begin{equation}\label{eq:def_q}
 q^i(\theta)\, =\, \frac{S^i_\gd(\theta,2Kr)}{Z^i_\gd(2Kr)}\, ,
\end{equation}
where for each $i=-d,\ldots,d$, $q^i(\cdot)$ is a probability density on $\bbT$, $S^i_\gd(\theta,2Kr)$ is given by
\begin{multline}
\label{eq:S_delta}
S^i_\gd(\theta,x)\, =\, e^{x\cos \theta+2\gd \go^i\theta}\bigg[(1-e^{4\pi \gd \go^i})
 \int_0^\theta e^{-x\cos u-2\gd\go^i u}\dd u\\
 +e^{4\pi \gd \go^i}\int_0^{2\pi} e^{-x\cos u -2\gd\go^i u}\dd u\bigg]\, ,
\end{multline}
$Z^i_\gd(2Kr)$ is a normalization constant and $r$ is a solution of the fixed-point problem
\begin{equation}
\label{eq:fixed_point}
 r\, =\, \Psi_\gd(2Kr)\, ,
\end{equation}
with
\begin{equation}\label{eq:fixed_point_Psi}
 \Psi_\gd(x)\, =\, \sum_{k=-d}^d \lambda^k\frac{\int_0^{2\pi} \cos (\theta) S^k_\gd(\theta,x)\dd \theta}{Z^k_\gd(x)}\, .
\end{equation}
We refer to \cite{Sakaguchi1988} or \cite{Luconthesis}, p.~75 for more details on this calculation. Computing the solution to the fixed-point relation \eqref{eq:fixed_point} enables to exhibit a phase transition for \eqref{eq:FP_Kur_finite}: the value $r=0$ always solves \eqref{eq:fixed_point} and corresponds to the uniform stationary solution $q\equiv(1/2\pi,\ldots,1/2\pi)$. It is the only stationary solution to \eqref{eq:FP_Kur_finite} as long as $K \leq K_c$, for a certain critical parameter $K_c= K_{ c}( \delta, (\omega^{ i})_{ i}, (\lambda^{ i})_{ i})>1$. This characterizes the absence of synchrony in case of small interaction. When $K> K_{ c}$, this flat profile coexists with circles of synchronized solutions corresponding to positive fixed-points in \eqref{eq:fixed_point}: each solution $r>0$ to \eqref{eq:fixed_point} gives rise to a nontrivial stationary profile $q$ given by \eqref{eq:def_q}
and to the circle of all its translation $q(\cdot+\ga)$, by invariance by rotation of the system (see Figure~\ref{fig:fixedpoint}).
\begin{figure}[h]
\centering
\subfloat[Correspondance between fixed-points of $ \Psi_{ \delta}(\cdot)$ and stationary solutions to \eqref{eq:FP_Kur_finite}.]{\includegraphics[width=0.45\textwidth]{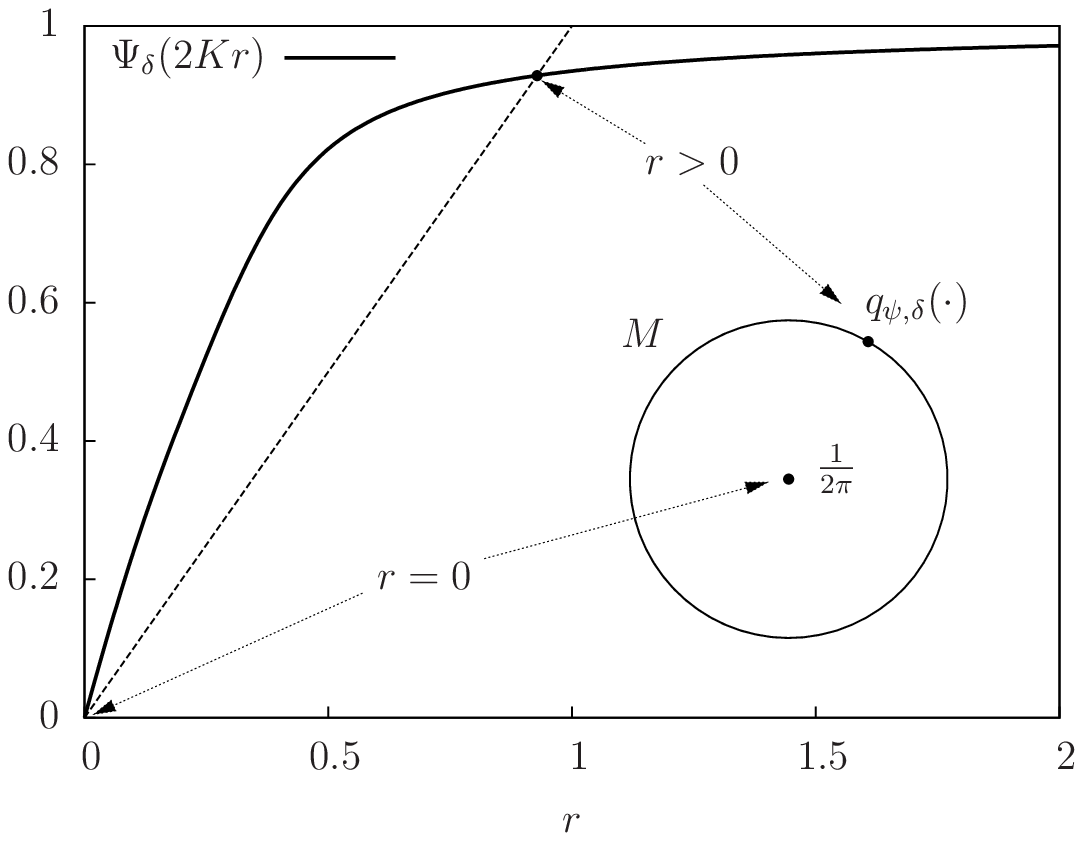}
\label{subfig:fonctionPsi}}
\quad\subfloat[A synchronized profile with $d=2$, $q=(q^{ -2}, q^{ -1}, q^{ 1}, q^{ 2})$.]{\includegraphics[width=0.48\textwidth]{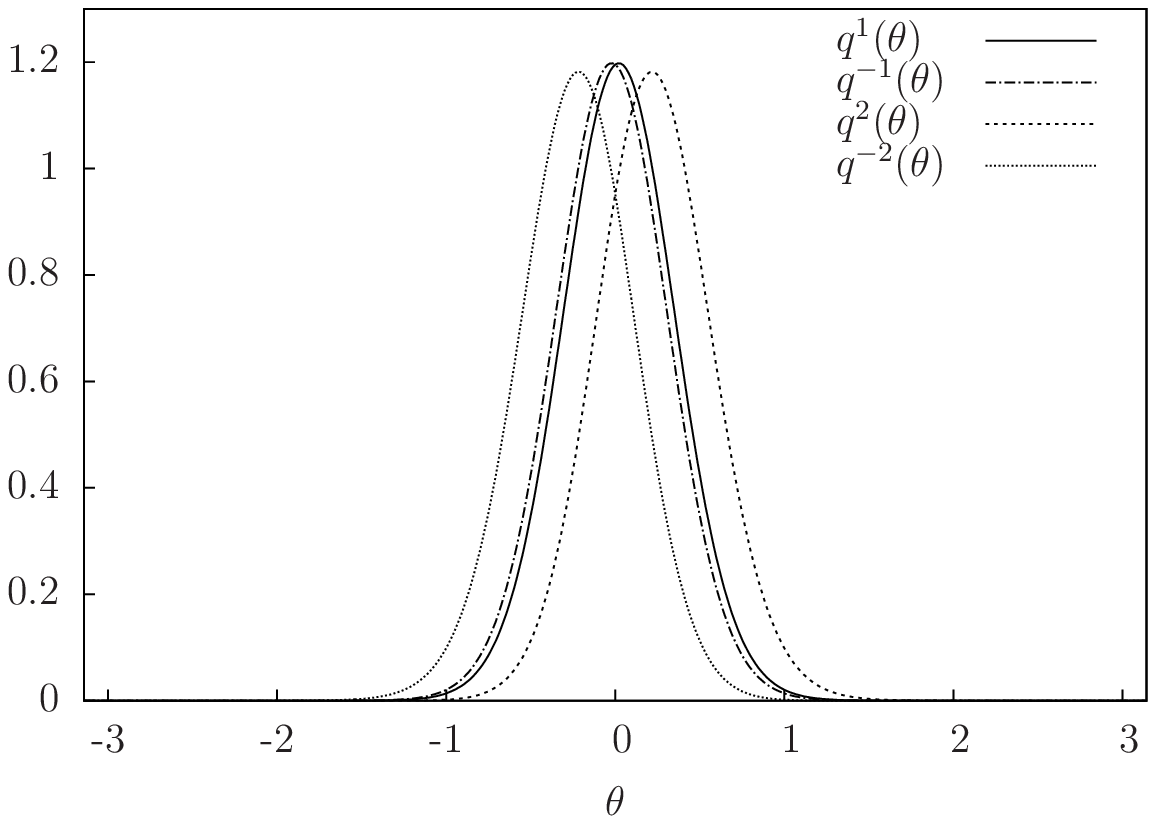}
\label{subfig:fonctionq}}
\caption{Fixed-point function $ \Psi_{ \delta}(\cdot)$ and stationary profiles when $K=5$, $d=2$, $ \omega_{ 1}=1$, $ \omega_{ 2}=10$ and $ \delta=0.1$.}%
\label{fig:fixedpoint}
\end{figure}

However, several circles may coexist when $K>K_c$ and these circles may not be locally stable (even the characterization of these circles in full generality is unclear, see e.g. \cite{Luconthesis}, \S~2.2.2). To ensure uniqueness and stability of a circle of non-trivial profiles, fix $K>1$ and restrict to small values of $\gd$: it is indeed proved in \cite{MR3207725}, Lemma~2.3 that there exists $ \delta_{ 1}=\gd_1(K)>0$ such that for all $\gd\leq \gd_1$, the fixed-point problem \eqref{eq:fixed_point} admits a unique positive solution $r_\gd$. We denote by $q_{0,\gd}$ the corresponding profile given by \eqref{eq:def_q} with $r=r_{ \delta}$, by $q_{\psi,\gd}$ its rotation of angle $\psi\in\bbT$ (i.e. $q_{\psi,\gd}(\cdot):=q_{0,\gd}(\cdot-\psi)$) and by $M$ the corresponding circle of stationary profiles (see Figure~\ref{fig:fixedpoint}):
\begin{equation}\label{eq:def_M}
 M\, :=\, \{q_{\psi,\gd}\, : \psi\in \bbT\}\, .
\end{equation}
It is proved in \cite{MR3207725}, Theorems~2.2 and~2.5 that the circle $M$ is stable under the evolution \eqref{eq:FP_Kur_finite}: the solution of \eqref{eq:FP_Kur_finite} starting from an initial condition sufficiently close to $M$ converges to a element $q_{\psi,\gd}$ of $M$ as $t\to\infty$. More details about this stability are given in Section \ref{sec:lin stab}. Whenever it is clear from the context, we will use the notations $q_{\gd}$ or $q_{\psi}$ instead of $q_{\psi,\gd}$, depending on the parameter we want to emphasize. 

\subsection{Long time behavior}
\label{sec:longtime_intro}
\begin{figure}[h]
\includegraphics[width=10cm]{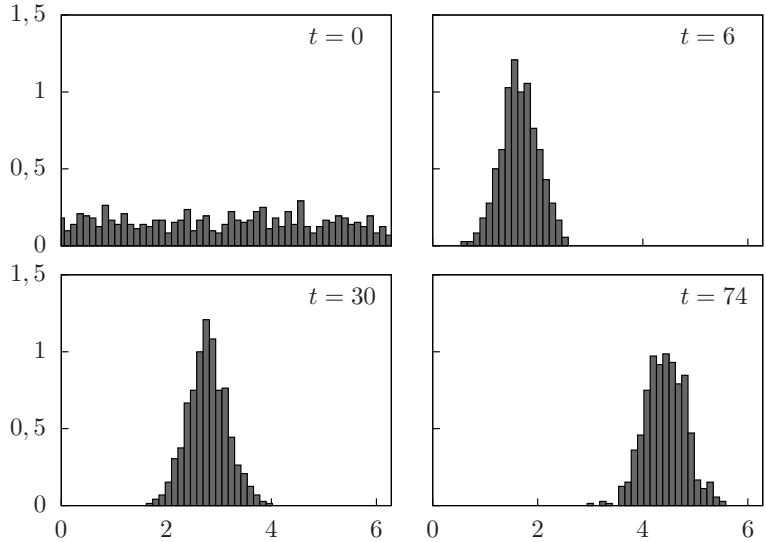}
\caption{Evolution of the marginal of the empirical measure \eqref{eq:def_mu_general} on $\bbT$ for a fixed choice of the disorder ($N=600$, $\lambda=\frac{1}{2}(\delta_{-1}+ \delta_{1})$, $K=6$). Starting from uniformly distributed rotators on $\bbT$ ($t=0$), the empirical measure converges to a synchronized profile on the manifold $M$ ($t=6$) and then moves (here to the right) at a constant speed, on a time scale compatible with $N^{ 1/2}$.}
\label{fig:simu}
\end{figure}
\begin{figure}[h]
\includegraphics[width=0.7\textwidth]{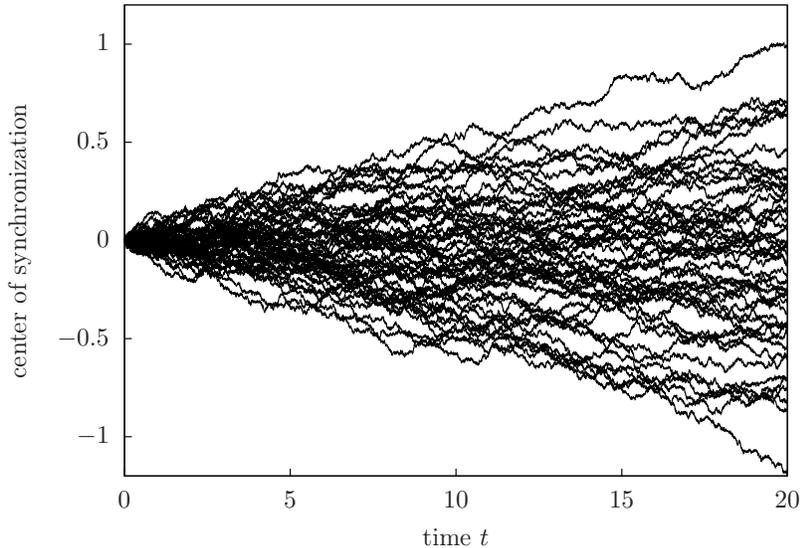}
\caption{Trajectories of the center of synchronization for different realizations of the disorder ($\lambda=\frac{1}{2}(\delta_{-0.5}+ \delta_{0.5})$, $K=4$, $N=400$).}
\label{fig:drift}
\end{figure}
Simulations of \eqref{eq:eds_Kur} (Figure \ref{fig:simu}) suggest an initial transition of the system from an incoherent state to a synchronized one, during  which the empirical measures of the rotators approaches the circle $M$ of synchronized stationary profiles. Secondly, the empirical measure remains close to $M$ and travels at first order at constant speed (which is random, depending on the realization of the disorder, see Figure~\ref{fig:drift}) along $M$ on the time scale $N^{1/2}t$. Let us give some intuition of this phenomenon: to fix ideas, consider the case where $d=1$, $ \omega^{ 1}= -\omega^{ -1}=1$ and $ \lambda^{ -1}= \lambda^{ 1}= \frac{ 1}{ 2}$. This corresponds to the simplest decomposition in \eqref{eq:eds_Kur} between two subpopulations, one naturally rotating clockwise ($ \omega_{ i}=+1$) and the second rotating anti-clockwise ($ \omega_{ i}=-1$). One can imagine that fluctuations in the finite sample $(\omega_1, \ldots, \omega_N)\in \{\pm1\}^N$ may lead, for example, to a majority 
of $+1$ with respect to $-1$, so that the rotators with positive frequency induce a global rotation of the whole system in the direction of the majority. When $N$ is large, this asymmetry is small, typically of order $N^{ -1/2}$ and is not sufficient to make the empirical measure drift away from the attracting manifold $M$, but induces a small drift that becomes macroscopic at times of order $N^{ 1/2}$. 

The purpose of the paper is precisely to prove the existence of this random traveling wave and show that it is indeed an effect of the fluctuations of the disorder. Our approach consists in a precise analysis of the dynamics of the empirical measure \eqref{eq:def_mu_finite}, which involves both disorder and thermal noise. One of the main difficulties is to control the thermal noise term and prove that it does not play any role at first order on the $N^{1/2}$-time scale.

\section{Main results and strategy of proof}
\subsection{The result}
\subsubsection{Admissible sequence of disorder}
We stress the fact that the random traveling waves described above is essentially a \emph{quenched} phenomenon, that is, true for a fixed realization of the disorder $( \omega_{ i})_{ i\geq1}$. In particular, the result does not really depend on the underlying mechanism that produced the sequence $( \omega_{ i})_{ i\geq1}$, it only depends on the asymmetry of this sequence. We prove our result for any {\sl admissible} sequence of disorder $( \omega_{ i})_{ i\geq1}$, defined as follows.
\begin{definition}
\label{def:disorder}
Fix a sequence $(\omega_{ i})_{ i\geq1}$ taking values in $\{ \omega^{ -d}, \omega^{ -(d-1)}, \ldots, \omega^{ d-1}, \omega^{ d}\}$ and for all $N\geq1$,  define the empirical proportions of frequencies in the $N$-sample $( \omega_{ 1}, \ldots, \omega_{ N})$
\begin{equation}
\label{eq:def_pNk}
\lambda_{ N}^k\, :=\, \frac{N^k}{N},\ k=-d,\ldots, d\, ,
\end{equation}
where $N^{ k}$ is the number of rotators with frequencies equal to $ \omega^{ k}$ (recall Section~\ref{sec:finite_disorder}). Define also the fluctuation process associated to $( \omega_{ i})_{ i\geq1}$ by $ \xi_{ N}:=( \xi_{ N}^{ -d}, \ldots, \xi_{ N}^{ d})$, where 
\begin{equation}
\label{eq:def_xiNk}
 \xi^k_N\, :=\, N^{1/2}( \lambda_{ N}^k- \lambda^k),\ k=-d,\ldots, d,\ N\geq1\, ,
\end{equation}
where $( \lambda^{ -d}, \ldots, \lambda^{ d})$ is given in Section~\ref{sec:finite_disorder}. Note that $\sum_{ k=-d}^{ d} \xi_{ N}^{ k}=0$ for all $N\geq1$. We say that the sequence $(\omega_{ i})_{ i\geq1}$ is \emph{admissible} if the following holds
\begin{enumerate}
\item \emph{Law of large numbers}: for all $k=-d, \ldots, d$, $ \lambda_{ N}^{ k}$ converges to $ \lambda^{ k}$, as $N\to\infty$.
\item \emph{Central limit behavior}: for all $ \zeta>0$, there exists $N_{ 0}$ (possibly depending on the sequence $(\omega_{ i})_{ i\geq1}$) such that for all $N\geq N_{ 0}$,\[  \max_{ k=-d, \ldots, d}\left\vert \xi_{ N}^{ k} \right\vert\leq N^{ \zeta}\, .\]
\end{enumerate}
\end{definition}
\begin{remark}[Admissibility for i.i.d. variables]
An easy application of the Borel-Cantelli Lemma shows that any independent and identically distributed sequence of disorder $(\omega_{ i})_{ i\geq1}$ with law $ \lambda$ is almost surely admissible, in the sense of Definition~\ref{def:disorder}.
\end{remark}
\subsubsection{Main result}
From now on, we fix once and for all an admissible sequence $(\omega_{ i})_{ i\geq1}$ in the sense of Definition~\ref{def:disorder}. A convenient framework for the analysis of the dynamics of \eqref{eq:def_mu_finite} and \eqref{eq:FP_Kur_finite} corresponds to the space $H^{-1}_d$, dual of the space $H_{ d}^{ 1}$, which is the closure of the set of vectors $(u^{-d},\ldots,u^d)$ of regular functions $u^k$
with zero mean value on $\bbT$ under the norm 
\begin{equation}
\label{eq:norm_H1_intro}
\left\Vert u \right\Vert_{ 1, d}\, :=\,   \left(\sum_{ k=-d}^{ d} \lambda^{ k}\int_{ \bbT} (\partial_{ \theta} u^{ k}( \theta))^{ 2} \dd \theta\right)^{ 1/2}\, .
\end{equation}
Remark that if $u$ is a vector of probability measures on $\bbT$, then $u$ naturally belongs to $H^{-1}_d$, since the family
of vectors given by $a_{n,k}(\theta)= (0,\ldots,0,\frac{\sqrt{2} cos(n\theta)}{n\sqrt{\gl^k}},0,\ldots,0)$
and $b_{n,k}(\theta)=(0,\ldots,0,\frac{\sqrt{2} sin(n\theta)}{n\sqrt{\gl^k}},0,\ldots,0)$
form an orthonormal basis of $H^1_d$ and for each such vector $u$
\begin{equation}\label{eq:proba in Hminus1}
 \Vert u\Vert_{-1,d}\, =\, \sqrt{\sum_{k=-d}^d \sum_{n=1}^\infty \big(\langle u, a_{n,k}\rangle^2+\langle u,b_{n,k}\rangle^2\big)}
 \, \leq\,  \pi\sqrt{\frac{2}{3}\sum_{k=-d}^d (\gl^k)^{-1}}\, .
\end{equation}
More details on the construction of $H_{ d}^{ -1}$ are given in Appendix~\ref{sec:appendix_rigged_spaces}. The main result of the paper is the following.
\begin{theorem}\label{th:main}
For all $K>1$, there exists $ \delta(K)$ such that, for all $ \delta\leq \delta(K)$, there exists a linear form $b:\bbR^{ 2d+1}\rightarrow \bbR$ (depending in $K$, $\gd$, the probability distribution $ \lambda$ and the possible values of the disorder $\go^i$) and a real number $\gep_0>0$ such that, for any admissible sequence $(\omega_{ i})_{ i\geq1}$, any vector of probability measures $p_0$ satisfying $\text{dist}_{H^{-1}_d}(p_0,M)\leq \gep_0$ such that for all $\gep>0$,
 \begin{equation}
  \bP \left( \left\Vert \mu_{N,0} - p_0  \right\Vert_{-1,d} \geq \gep \right)  \rightarrow 0,\, \text{as $N\to \infty$}\, , 
 \end{equation}
then, there exists $ \theta_0\in \bbT$ (depending on $p_0$) and a constant $c$
such that for each finite time $t_f>0$ and all $\gep>0$, denoting $t_0^N=cN^{-1/2}\log N$, we have
 \begin{equation}
 \label{eq:main_th_drift}
   \bP \left( \sup_{t\in [t_0^N,t_f]}\left\Vert \mu_{N,N^{1/2} t} - q_{ \theta_0+b(\xi_N)t}
  \right\Vert_{-1,d} \geq \gep  \right)
  \rightarrow 0,\, \text{as $N\to\infty$}\, .
 \end{equation}
 Moreover, $ \xi \mapsto b( \xi)$ has the following expansion in $ \delta$: for all $\xi$ such that $\sum_{k=-d}^d \xi^k=0$, we have
  \begin{equation}
  b(\xi)\, =\, \gd \sum_{k=-d}^d \xi^k \go^k+O(\gd^2)\, .
 \end{equation}
\end{theorem}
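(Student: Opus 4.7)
The plan is to analyse the Itô semimartingale decomposition of the empirical measure $\mu_{N,t}$ in $H^{-1}_d$ and, near the attracting manifold $M$, to isolate the slow motion along $M$ generated by the microscopic asymmetry of the disorder. Applying Itô's formula to \eqref{eq:eds_Kur} one obtains, for each $i\in\{-d,\ldots,d\}$,
\begin{equation}
 d\mu^i_{N,t} \,=\, \Big[\tfrac{1}{2}\partial^2_\theta\mu^i_{N,t} - \partial_\theta\Big(\mu^i_{N,t}\sum_{k=-d}^d \lambda^k_N\, J*\mu^k_{N,t}\Big) - \delta\omega^i\partial_\theta\mu^i_{N,t}\Big]\,dt + dW^i_{N,t},
\end{equation}
where $W_N$ is an $H^{-1}_d$-valued martingale whose bracket has intensity of order $1/N$ per unit time. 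The key structural observation is that this is exactly the mean-field equation \eqref{eq:FP_Kur_finite} with the theoretical weights $\lambda^k$ replaced by the empirical ones $\lambda^k_N=\lambda^k+N^{-1/2}\xi^k_N$, which splits the dynamics into an autonomous flow (for which $M$ is invariant), a deterministic disorder-induced perturbation $-N^{-1/2}\sum_k\xi^k_N\,\partial_\theta(\mu^i_{N,t}\,J*\mu^k_{N,t})$, and a thermal noise of size $N^{-1/2}$.

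I would first use the exponential contraction transverse to $M$ provided by the spectral gap of the linearisation $\cL_\delta$ of \eqref{eq:FP_Kur_finite} (referred to in Section~\ref{sec:lin stab}) together with Burkholder-Davis-Gundy estimates on $W_N$ to show that $\text{dist}_{H^{-1}_d}(\mu_{N,t},M)\leq N^{-1/4}$ with high probability at the physical time $c\log N$, for $c$ large enough. In rescaled time this is precisely $t_0^N=cN^{-1/2}\log N$, and it also defines the initial phase $\theta_0$ as the projection of $\mu_{N,c\log N}$ onto $M$. On the subsequent window I would introduce a tubular decomposition $\mu_{N,t}=q_{\psi_N(t)}+v_N(t)$, with $\psi_N(t)\in\bbT$ obtained by a smooth projection onto $M$ and $v_N(t)$ lying in the stable subspace of $\cL_\delta$ at $q_{\psi_N(t)}$.

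Applying Itô's formula to $\psi_N$ and using the fact that the autonomous flow preserves $M$ (so that the leading drift of $\psi_N$ vanishes when computed with the theoretical weights $\lambda^k$) one gets
\begin{equation}
 d\psi_N(t) \,=\, \frac{1}{\sqrt{N}}\,F\big(\xi_N,v_N(t),\psi_N(t)\big)\,dt + d\cN_N(t),
\end{equation}
where $F$ is the projection on the tangent to $M$ at $q_{\psi_N(t)}$ of the disorder-induced perturbation (plus the Itô corrections) and $\cN_N$ is a martingale inherited from $W_N$ with bracket of order $1/N$. The coupled equation for $v_N$, combined with the transverse spectral gap, forces $\|v_N(t)\|_{-1,d}$ to remain of order $N^{-1/4}\log N$ throughout $[c\log N,\sqrt{N}t_f]$ with high probability, via a stochastic Gronwall-type argument.

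Rescaling time to $s=t/\sqrt{N}$, the martingale $\tilde\cN_N(s)=\cN_N(\sqrt{N}s)$ has bracket of order $s/\sqrt{N}\to 0$ and thus vanishes uniformly on $[t_0^N,t_f]$, while $F(\xi_N,v_N,\psi_N)$ can be replaced by $F(\xi_N,0,\psi_N)$ up to an error controlled by the a priori bound on $v_N$. The rotation invariance of $M$ implies that $F(\xi,0,\psi)$ is independent of $\psi$, and the linear dependence of the perturbation on $\xi$ makes it a linear form $b(\xi)$; integrating then yields $\tilde\psi_N(s)\to\theta_0+b(\xi_N)s$ uniformly on $[t_0^N,t_f]$, which is \eqref{eq:main_th_drift}. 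The $\delta$-expansion follows from the observation that at $\delta=0$ the stationary profile $q_{0,0}$ is independent of $i$, so $\sum_k\xi^k J*q^k_{0,0}=(\sum_k\xi^k)J*q_{0,0}=0$ and hence $b(\xi)=O(\delta)$; a first-order expansion $q^i_\delta=q_{0,0}+\delta\tilde q^i+O(\delta^2)$ of the profiles defined by \eqref{eq:def_q}--\eqref{eq:fixed_point_Psi}, together with the explicit projection on the tangent, produces the announced $\delta\sum_k\xi^k\omega^k$ term. I expect the main obstacle to be the simultaneous long-time control of $\psi_N$ and $v_N$ on $[c\log N,\sqrt{N}t_f]$: one must show that the various Itô corrections and the nonlinear coupling $F(\xi_N,v_N,\psi_N)-F(\xi_N,0,\psi_N)$, integrated over $\sqrt{N}t_f$ physical units of time, contribute only $o(1)$, which requires both a quantitative spectral gap for $\cL_\delta$ and a careful use of the symmetry of $\lambda$ to cancel higher-order spurious drifts.
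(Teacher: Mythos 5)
Your strategy is essentially that of the paper: decompose $\mu_{N,t}$ into a tangential phase $\psi_N$ plus a transverse remainder $v_N$, use the spectral gap of $L_{\psi,\delta}$ to keep $v_N$ small, track the slow drift of $\psi_N$ on the $\sqrt N$ time scale, and then show the noise contribution to $\psi_N$ averages out while the deterministic disorder perturbation produces $b(\xi_N)$. The paper implements this via a mild-formulation identity and a discrete iterative scheme over intervals of fixed length $T$ (Propositions~\ref{prop:SPDE_mild}, \ref{prop:bound_nu}, \ref{prop:expansion_psi}); you propose a continuous-time It\^o/stochastic-Gronwall variant with a smooth tubular projection. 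Both routes are sound in principle, and your observation that the equation is just \eqref{eq:FP_Kur_finite} with $\lambda^k$ replaced by $\lambda^k_N=\lambda^k+N^{-1/2}\xi^k_N$ is exactly the structural fact exploited in $D_N$ and $R_N$ of \eqref{eq:D}--\eqref{eq:R}. Your $\delta$-expansion sketch (vanishing at $\delta=0$ because $q^i_0$ is $i$-independent and $\sum_k\xi^k=0$, then first order via the $\delta$-derivative of $q^i_\delta$) is also what the paper does.

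However, there is a concrete quantitative gap: the a priori transverse bound you claim, $\|v_N(t)\|_{-1,d}=O(N^{-1/4}\log N)$ on $[c\log N,\sqrt N t_f]$, is too weak to close the argument. The remainder $R_N(\nu)$ in \eqref{eq:R} contains a genuinely quadratic piece $\bigl(\sum_k\lambda^k_N J*\nu^k\bigr)\nu$, of size $\|\nu\|_{-1,d}^2$. Integrating the tangential projection of this term over $\sqrt N\, t_f$ physical time units gives a spurious phase drift of order $N^{1/2}\,\|v_N\|^2$, which with your bound is $O((\log N)^2)$ — it does not vanish, and it is not killed by symmetry (that cancellation only removes the first-order contribution in $\delta$, not the $\|\nu\|^2$ error). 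The paper resolves this by proving the sharper estimate $\|\nu_{n,t}\|_{-1,d}=O(N^{-1/2+2\zeta})$ uniformly for $\zeta<1/8$ (Proposition~\ref{prop:bound_nu}), which is natural since the noise input over a unit window is of size $N^{-1/2+\zeta}$ and the spectral gap contracts the transverse deviation to steady state on a $O(1)$ time scale; with that bound the quadratic error is $N^{1/2}\cdot N^{-1+4\zeta}=N^{-1/2+4\zeta}=o(1)$. You should replace $N^{-1/4}$ by $N^{-1/2+\zeta}$ throughout, which in particular requires the careful noise control of Proposition~\ref{prop:estim_noise} rather than a crude BDG bound, and is the real reason the discrete scheme is convenient: it lets one iterate the contraction and re-center the projection at each step without accumulating error.
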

Theorem~\ref{th:main} is simply saying that, on a time scale of order $N^{ 1/2}$, the empirical measure \eqref{eq:def_mu_finite} is asymptotically close to a synchronized profile $q\in M$, traveling at speed $b( \xi_{ N})$ along $M$. This drift depends on the asymmetry $ \xi_{ N}$ of the quenched disorder $( \omega_{ i})_{ i\geq1}$. In \eqref{eq:main_th_drift}, $ t_{ 0}^{ N}$ represents the time necessary for the system to get sufficiently close to the manifold~$M$. 
\subsubsection{Some particular cases and extensions}
First remark that the situation where the sample of the disorder $( \omega_{ i})_{ i= 1, \ldots, N}$ is perfectly symmetric corresponds to $ \xi_{ N}^{ -i}= \xi_{ N}^{ i}$ for all $i= 1, \ldots, d$. In this case, the drift in \eqref{eq:main_th_drift} vanishes:
\begin{proposition}
\label{prop:drift_symmetric}
If for all $i=1, \ldots, d$, $ \xi^{ -i} = \xi^{ i}$, then $b( \xi)=0$.
\end{proposition}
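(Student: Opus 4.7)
The plan is to exploit the discrete symmetry of the Kuramoto system \eqref{eq:eds_Kur} under the joint reflection $(\theta, \omega) \mapsto (-\theta, -\omega)$, which should force the linear form $b$ to be antisymmetric under the involution $\iota : \xi \mapsto \hat{\xi}$ defined by $\hat{\xi}^k := \xi^{-k}$. Since a symmetric fluctuation vector satisfies $\hat{\xi} = \xi$ (as $\xi^{-i} = \xi^i$ for $i \geq 1$, and $\hat\xi^0 = \xi^0$ automatically), the conclusion $b(\xi) = 0$ follows at once.

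First, I would check the pathwise invariance of the SDE under reflection. If $(\varphi^k_j)$ solves \eqref{eq:eds_Kur} with subpopulation sizes $(N^{-d}, \ldots, N^d)$ driven by i.i.d.\ Brownian motions $(B^k_j)$, then setting
\begin{equation*}
\tilde\varphi^k_j(t) := -\varphi^{-k}_j(t), \qquad \tilde B^k_j(t) := -B^{-k}_j(t),
\end{equation*}
one checks that $(\tilde\varphi^k_j)$ solves the same system with subpopulation sizes $\tilde N^k = N^{-k}$ driven by the i.i.d.\ family $(\tilde B^k_j)$; this uses only $\omega^{-k} = -\omega^k$, oddness of $\sin$, and the invariance in law of Brownian motion under sign change. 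At the level of fluctuations, and using the assumed $\lambda^k = \lambda^{-k}$, this transformation acts exactly as $\tilde \xi_N = \hat \xi_N$. Admissibility in the sense of Definition~\ref{def:disorder} is manifestly preserved by $\iota$.

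Next I would check that the stationary manifold $M$ in \eqref{eq:def_M} is preserved by the reflection, with induced map $\psi \mapsto -\psi$ on $M \simeq \bbT$. Preservation is automatic: $M$ is characterised for $\delta$ small as the unique circle of non-trivial stationary solutions of \eqref{eq:FP_Kur_finite}, and reflection maps stationary solutions of \eqref{eq:FP_Kur_finite} to stationary solutions. Direct inspection of \eqref{eq:def_q}--\eqref{eq:S_delta}, via the change of variable $u \to -u$ in \eqref{eq:S_delta} together with $\omega^{-k} = -\omega^k$, shows moreover that the reference profile $q_{0,\delta}$ is itself fixed by the reflection, which pins down the parametrisation. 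A traveling wave $t \mapsto q_{\theta_0 + b(\xi_N) t, \delta}$ along $M$ is therefore sent by reflection to $t \mapsto q_{-\theta_0 - b(\xi_N) t, \delta}$, i.e.\ it travels along $M$ with drift $-b(\xi_N)$.

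Applying Theorem~\ref{th:main} both to the original sequence $(\omega_i)$ and to its reflected partner, and identifying the two descriptions of what is the same stochastic process (up to reflection of angles and relabelling of subpopulations), gives $b(\hat\xi_N) = -b(\xi_N)$ for every admissible $\xi_N$. Since the set of such $\xi_N$ is dense in the hyperplane $\{\sum_k \xi^k = 0\} \subset \bbR^{2d+1}$ and $b$ is linear, this identity extends to $b \circ \iota = -b$ on the whole hyperplane. A symmetric $\xi$ satisfies $\iota\xi = \xi$, whence $b(\xi) = -b(\xi) = 0$. The only point requiring genuine care is the identification of the induced involution on $M$ as the orientation-reversing $\psi \mapsto -\psi$ (rather than $\psi \mapsto -\psi + c$ for some nonzero $c$, which would still yield the same conclusion but through a slightly less clean argument); this reduces to the explicit symmetry of $q_{0,\delta}$ noted above.
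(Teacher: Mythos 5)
Your argument is correct in spirit and genuinely different from the paper's. The paper proves Proposition~\ref{prop:drift_symmetric} by a short direct computation: using the explicit representation $b(\xi) = \mathtt{p}\left(-\partial_\theta\left(q\sum_k \xi^k (J\ast q^k)\right)\right)$ from \eqref{eq:drift_b}, one decomposes the test-function space into the ``odd'' and ``even'' parts $\cO$ and $\cE$ with respect to the joint reflection $f^i(\theta)\mapsto f^{-i}(-\theta)$, observes that $q\in\cE$, $J\ast q\in\cO$, that $L_{\psi,\delta}$ preserves the decomposition, and that $\mathtt p$ is the spectral projection onto $\partial_\theta q\in\cO$, so $\mathtt p$ annihilates $\cE$; one then checks that symmetric $\xi$ forces the argument of $\mathtt p$ into $\cE$. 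You instead work one level up, at the SDE, using the same underlying reflection but as a pathwise symmetry of the microscopic dynamics, and you characterise $b$ through the statement of Theorem~\ref{th:main} rather than through \eqref{eq:drift_b}. Your route yields the (equivalent, once one notes $b$ is linear) antisymmetry $b\circ\iota = -b$, and has the pedagogical advantage of explaining \emph{why} the drift vanishes; the paper's route is shorter and depends only on the formula for $b$, not on the full content of Theorem~\ref{th:main}.

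There are, however, two real gaps you should close. First, the statement ``Applying Theorem~\ref{th:main}\ldots gives $b(\hat\xi_N) = -b(\xi_N)$ for every admissible $\xi_N$'' is not literally available: the theorem's conclusion \eqref{eq:main_th_drift} is asymptotic in $N$, so for a fixed finite $N$ and a fixed vector $\xi_N$ it says nothing exact. What one can extract (via the continuously lifted phase $\varPsi^N_t = \psi_0 + b(\xi_N)t + O(N^{-1/4+2\zeta})$ of \eqref{eq:def PsiNt} and its reflected counterpart, using also $\proj_M\circ R = -\proj_M$) is that along any admissible sequence with $\xi_N\to\xi_0$ one has $b(\hat\xi_0) = -b(\xi_0)$. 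Second, the density claim then requires constructing, for each target $\xi_0$ in the hyperplane $\{\sum_k\xi^k=0\}$, an admissible sequence $(\omega_i)$ with $\xi_N\to\xi_0$; this amounts to an integer balancing argument (choose the counts $N^k$ close to $N\lambda^k + \xi_0^k N^{1/2}$ with $\sum_k N^k = N$), which is doable but should be spelled out, since it is the step that turns the pathwise symmetry into a statement about the linear form $b$ on the whole hyperplane. With these two points filled in, the proof is sound, but it is substantially heavier than the one-paragraph parity computation the authors use.
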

\noindent
In particular, if one chooses the disorder in such a way that $(\omega_{ i})_{ i=1, \ldots, N}$ is always symmetric (e.g. choose an even number of particles $N$ and define each $ \omega_{ i}$ to be alternatively $\pm 1$), the drift is always zero. We believe in this case that one would need to look at larger time scales of order $N$ to see the first order of the expansion of the empirical measure $ \mu_{ N}$. Proof of Proposition~\ref{prop:drift_symmetric} is given in Section~\ref{sec:drift_symmetric}.

\medskip

In case the sequence $( \omega_{ i})_{ i\geq1}$ is i.i.d. with law $ \lambda$, a standard Central Limit Theorem shows that the drift $b( \xi_{ N})$ converges in law to a Gaussian distribution $\cN(0,v^2)$, where $v^2$ depends on $K$, $\gd$, the probability distribution $ \lambda$ and the possible values of the disorder $\go^i$. 
\begin{proposition}\label{prop:xpansion v}
The following asymptotic of $v^{ 2}$ holds when $\gd\rightarrow 0$:
\begin{equation}
 v^2\, =\, \gd \sum_{k=-d}^d \lambda^k(\go^k)^2 +O(\gd^2) \, .
 \end{equation}
\end{proposition}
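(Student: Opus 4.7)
The strategy is to combine a classical multivariate Central Limit Theorem for the fluctuations $\xi_{N}$ with the linearity of $b$ provided by Theorem~\ref{th:main}, and then to expand the resulting variance in $\gd$.

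Since $(\go_{i})_{i\geq 1}$ is i.i.d.\ with law $\gl$, the vector of counts $(N^{-d}, \ldots, N^{d})$ is multinomial of parameters $(N;\gl^{-d}, \ldots, \gl^{d})$. The standard multivariate CLT then gives the convergence in distribution of $\xi_{N}=(\xi_{N}^{-d}, \ldots, \xi_{N}^{d})$ to a centered Gaussian vector $G\in \bbR^{2d+1}$ of covariance matrix $\Sigma^{k,l}=\gl^{k}\ind_{\{k=l\}}-\gl^{k}\gl^{l}$, supported on the hyperplane $\{\sum_{k}x^{k}=0\}$, consistently with the deterministic constraint $\sum_{k} \xi_{N}^{k}=0$ that holds for every $N$. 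Since $b : \bbR^{2d+1} \to \bbR$ is a continuous (linear) form, the continuous mapping theorem yields $b(\xi_{N}) \Rightarrow b(G)$, where $b(G)$ is centered Gaussian with some variance $v^{2}$.

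Writing the linear form as $b(x)=\sum_{k=-d}^{d} \alpha^{k} x^{k}$, a short computation using the multinomial covariance produces
\begin{equation*}
v^{2}\, =\, \sum_{k=-d}^{d} \gl^{k} (\alpha^{k})^{2} - \left( \sum_{k=-d}^{d} \gl^{k} \alpha^{k} \right)^{2}.
\end{equation*}
The expansion $b(\xi) = \gd \sum_{k} \xi^{k} \go^{k} + O(\gd^{2})$ from Theorem~\ref{th:main} identifies $\alpha^{k} = \gd \go^{k} + O(\gd^{2})$. The symmetry $\gl^{-k}=\gl^{k}$ combined with $\go^{-k}=-\go^{k}$ yields $\sum_{k} \gl^{k}\go^{k}=0$, so that $\sum_{k} \gl^{k}\alpha^{k} = O(\gd^{2})$ and the squared-mean term is absorbed into the remainder. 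Plugging the expansion of $\alpha^{k}$ into the remaining quadratic sum produces precisely $\sum_{k} \gl^{k}(\go^{k})^{2}$ as the leading coefficient in $\gd$, which is the announced expansion.

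I do not anticipate any serious obstacle: the whole argument is essentially a one-line consequence of the multivariate CLT and the linearity of $b$. The only point needing care is that the $O(\gd^{2})$ remainder in the expansion of $b(\xi)$ from Theorem~\ref{th:main} should be uniform in $\xi$ on the bounded sets where the weakly convergent sequence $\xi_{N}$ essentially lives (by tightness), which is implicit in the way that expansion is obtained.
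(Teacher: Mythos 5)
Your proof is correct and follows essentially the same path as the paper's: multivariate CLT for the multinomial counts giving the covariance $\Sigma$, continuity of the linear form $b$, insertion of the small-$\gd$ expansion of $b$ from Proposition~\ref{prop:exp b}, and the symmetry cancellation $\sum_k\gl^k\go^k=0$. One small remark: both your computation and the paper's own proof actually yield $v^2=\gd^2\sum_{k}\gl^k(\go^k)^2+O(\gd^3)$ (as one should expect, since $v^2$ is a variance and $b(\xi)=O(\gd)$), so the factor $\gd$ appearing in the statement of the proposition is evidently a typo that your closing sentence repeats without flagging.
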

Proof of Proposition~\ref{prop:xpansion v} is given in Section~\ref{sec:asymptotic_drift}.
\begin{remark}
Without much modification in the proof, the result can be easily extended to sequences $(\omega_{ i})_{ i\geq1}$ with fluctuations of order different from $ \sqrt{N}$, that is when for some $a\in(0,1)$,
\begin{equation}
 \xi^a_N \rightarrow \xi^a, \text{as $N\to\infty$}\, ,
\end{equation}
for some vector $\xi^a$ where $\xi_{ N}^a:= N^a(\lambda_N- \lambda)$. In this case, the correct time renormalization is $N^a$ and we obtain a result of the type
 \begin{equation}
   \bP \left( \sup_{t\in [t_0^N,t_f]}\left\Vert \mu_{N,N^{a} t} - q_{ \theta_0+b(\xi^a)t}
  \right\Vert_{-1,d} \geq \gep  \right)
  \rightarrow 0, \text{as $N\to\infty$}\, .
 \end{equation}
Here, we only treat the case $a=1/2$ for simplicity. For smaller fluctuations of size $N^{-a}$ with $a\geq 1$, the time renormalization should be of order $N$. Since at this scale the effects of the thermal noise appear, the limit phase dynamics should be of diffusive type and
a precise analysis of the different terms and symmetries that occur would be necessary to get the proper drift in this case.
\end{remark}

\subsection{Links with existing models}
\label{sec:links_existing_models}
\subsubsection{Symmetric versus non-symmetric disorder}
This work is the natural continuation of \cite{MR3207725}, Theorems~2.2 and 2.5 in the case of a symmetric disorder. The purpose of \cite{MR3207725} was to analyze the dynamics of the nonlinear Fokker-Planck equation \eqref{eq:FP_Kur_general} for both symmetric and asymmetric law of the disorder. The main point is that understanding \eqref{eq:FP_Kur_general} is not sufficient in itself for the analysis of the finite size system \eqref{eq:eds_Kur_general} in the symmetric case, since it does not account for the finite-size effects of the disorder that are crucial here.

As already mentioned, in the case where $ \lambda$ is asymmetric, one observes macroscopic travelling waves with deterministic drift at the scale of the nonlinear Fokker-Planck equation \eqref{eq:FP_Kur_general}. It is reasonable to think that an analysis similar to what has been done in this paper  would also show the existence of a finite order correction to this deterministic drift for a large but finite system with quenched disorder.

Some previous results already suggested the possibility of these disorder-induced traveling waves in the Kuramoto model. Namely, the purpose of previous work \cite{Lucon2011} was to prove a quenched fluctuation result for the empirical measure \eqref{eq:def_mu_general} around its mean-field limit \eqref{eq:FP_Kur_general} on a finite time horizon $[0, T]$. The main conclusion of \cite{Lucon2011} was that these fluctuations are disorder dependent and the long time analysis of the limiting fluctuations \cite{Lucon20146372} suggested a non-self-averaging phenomenon for \eqref{eq:eds_Kur_general} similar to the one observed here.

\subsubsection{The case $ \delta = 0$}

This paper uses techniques previously developed in \cite{Bertini:2013aa} in the context of the stochastic Kuramoto model without disorder, that is when one takes $\gd=0$ in \eqref{eq:eds_Kur_general}:
\begin{equation}\label{eq:Kur_without_disorder}
  \dd \varphi_j(t)\,
 =\,  -\frac{K}{N} \sum_{l=1}^{N} \sin(\varphi_j(t)-\varphi_l(t))\dd t + \dd B_j(t)\, ,\ j=1, \ldots, N\, ,
\end{equation}
associated in the limit $N\to \infty$ to the mean-field PDE
\begin{equation}\label{eq:FP_without_disorder}
  \partial_t p_t(\theta)\, =\, \frac12 \partial^2_\theta p_t(\theta)- \partial_\theta\Big( p_t(\theta)J*p_t(\theta)\Big)\, .
\end{equation}
Similarly to \eqref{eq:FP_Kur_finite} in Section~\ref{sec:stationary_solutions_disorder}, evolution \eqref{eq:FP_without_disorder} generates a stable circle $M_{ 0}$ of stationary synchronized profiles when $K>K_{ c}(0)=1$ (see Section~\ref{sec:case_delta_0} for further details). The model \eqref{eq:Kur_without_disorder}-\eqref{eq:FP_without_disorder} has been the subject of a series of recent papers \cite{BGP,Bertini:2013aa,GPP2012,doi:10.1137/110846452}, addressing the linear and nonlinear stability of the circle of synchronized profiles $M_{ 0}$ as well as the long time dynamics of the microscopic system \eqref{eq:Kur_without_disorder}. The analysis of \eqref{eq:FP_without_disorder} strongly relies on the reversibility of \eqref{eq:Kur_without_disorder} (with the existence of a proper Lyapunov functional, see \cite{BGP} for more details), whereas reversibility is lost when $ \delta>0$.

Concerning the long time behavior of \eqref{eq:Kur_without_disorder}, it is shown in \cite{Bertini:2013aa} that under very general hypotheses on the initial condition, the empirical measure of \eqref{eq:Kur_without_disorder} first approaches the circle $M_0$ exponentially fast (that corresponds to the synchronization of the system \eqref{eq:Kur_without_disorder} along a stationary profile solving \eqref{eq:FP_without_disorder}) and then stays close to $M_0$ for a long time with high probability, while the phase of its projection on $M_0$ performs a Brownian motion as $N\rightarrow \infty$
which corresponds to a macroscopic effect of the thermal noise.
The persistence of proximity of the empirical measure to $M$ for long times and the convergence of this phase to a Brownian motion were in fact already established in the unpublished PhD Thesis \cite{dahms} the authors of \cite{Bertini:2013aa} were not aware of, using in particular moderate deviations estimates of the mean field process. Note that the techniques of \cite{dahms} do not apply here, since a similar analysis would involve moderate (or large) deviations in a quenched set-up, result that, to the best of our knowledge, has not been proven so far (for \emph{averaged} large deviations, see \cite{daiPra96}).

A significant difference between \cite{Bertini:2013aa,dahms} and the present analysis is that the Brownian excursions in \cite{Bertini:2013aa,dahms} occur on a time scale of order $N$ whereas it is sufficient to look at times of order $N^{ 1/2}$ to see the traveling waves in the disordered case. This will entail significant simplifications in the analysis of \eqref{eq:eds_Kur}, since the detailed analysis on the thermal noise performed in \cite{Bertini:2013aa,dahms} will not be required here.

Note also that, contrary to \cite{Bertini:2013aa}, we do not prove the first step of the phenomenon described in Figure~\ref{fig:simu}, that is the initial approach of the system to a neighborhood of the manifold $M$ in an exponentially short time, regardless of the initial condition. This result would require a global stability result for the system of PDEs \eqref{eq:FP_Kur_finite} which has not been proved for the moment, due to the absence of any Lyapunov functional for \eqref{eq:FP_Kur_finite} when $ \delta>0$. We prove our result for initial conditions belonging to some macroscopic neighborhood of $M$ (see Section~\ref{sec:approach} for more details).

\subsubsection{SPDE models with vanishing noise}

This paper is related to previous works in the context of SPDE models for phase separation.
In \cite{cf:BDMP,cf:Funaki}, the authors studied the Allen-Cahn model with symmetric bistable potential and vanishing noise.
They showed that for an initial data close a profile connecting the two phase, the interface performs a Brownian motion. Some techniques initially introduced in these works, as the discretization of the dynamics in an iterative scheme, were developed in \cite{Bertini:2013aa} in the context of the Kuramoto model without disorder (making use of Sobolev spaces with negative exponents to deal with empirical measures) and will have a central role in our analysis (see Section~\ref{subsec:dyn M}). The results of \cite{cf:BDMP,cf:Funaki} have been extended in \cite{cf:BBjsp} by considering small asymmetries in the potential which induce a drift in the interface dynamics and by considering macroscopically finite volumes \cite{cf:BBB}, with effect a repulsion at the boundary for the phase. Stochastic interface motions have also been recently studied in the context of the Cahn-Hilliard model with vanishing colored noise \cite{Antonopoulou}. In this model, the limit behavior of the interface is given by a SDE (or system 
of 
SDE's in the case of several interfaces) with drift and diffusion coefficients depending on coloration of the noise and on the length of the interface.

\subsection{Linear stability of stationary solutions}\label{sec:lin stab}
In the whole paper, we suppose that $K>1$ and that $ \delta>0$ is smaller than some $ \delta(K)>0$. This critical value $ \delta(K)$ is determined by $ \delta(K)= \min( \delta_{ 1}(K), \delta_{ 2}(K))$, where $ \delta_{ 1}(K)$ ensures the existence of a unique circle $M$ of stationary solutions (recall Section~\ref{sec:stationary_solutions_disorder}) and where $ \delta_{ 2}(K)$ comes from the stability analysis of this circle (see Appendix~\ref{sec:appendix_spectral_semigroups} for more details). 

More precisely, our result relies deeply on the linear stability of the dynamical system induced by the limit system of PDEs \eqref{eq:FP_Kur_finite} in the neighborhood of the circle of stationary profiles $M$. For $ \psi\in\bbT$, $ \delta>0$, consider the operator $L_{ \psi, \delta}$ of the linearized evolution around $q_{\psi,\gd}\in M$ given by
\begin{equation}
\label{eq:def_Lpsiq}
 (L_{ \psi, \delta} u)^i\, =\, \frac 12 \partial_{ \theta}^{ 2}u^i-\gd \go^i \partial_{ \theta}u^i-\partial_{ \theta}\left(u^i\sum_{k=-d}^d \lambda^k (J\ast q_{\psi,\gd}^k) +q_{\psi,\gd}^i\sum_{k=-d}^d \lambda^k (J* u^k)\right)\, ,
\end{equation}
for all $i=-d, \ldots, d$ with domain
\begin{equation}
\left\{u=(u^{-d},\ldots,u^d)\, :
u^i \in C^2(\bbT)\quad  \text{and}\quad  \int_\bbT u^i(\theta)\dd\theta =0,\quad \forall i=-d,\ldots,d \right\} \, .
\end{equation}
Due to the invariance by rotation of the model \eqref{eq:FP_Kur_finite}, $L_{\psi,\gd}$ is linked to $L_{0,\gd}$ in an obvious way:
$L_{\psi,\delta}u_\psi(\cdot)=L_{0,\gd}u(\cdot)$, where $u_\psi(\cdot)=u(\cdot-\psi)$, so that the operators $(L_{\psi,\gd})_{ \psi\in\bbT}$ obviously share the same spectral properties. For any operator $L$, the usual notations $ \sigma(L)$ (resp. $ \rho(L)$ and $R(\lambda, L)$) will be used for the spectrum of $L$ (resp. its resolvent set and its resolvent operator for $ \lambda\in \rho(L)$).

One can prove (see \cite{MR3207725}, Theorem~2.5 and Appendix~\ref{sec:appendix_spectral_semigroups} below) that for all $ 0\leq \delta\leq \delta_{ 2}(K)$, $L_{ \psi, \delta}$ is closable in $H^{-1}_d$, sectorial, has $0$ for eigenvalue, associated to the eigenvector $\partial_{ \theta} q_{ \psi, \delta}$, which belongs to the tangent space of $M$ in $q_{ \psi, \delta}$ (this reflects the fact that the dynamics induced by \eqref{eq:FP_Kur_finite} on $M$ is trivial) and that the rest of the spectrum is negative, separated from the
eigenvalue $0$ by a spectral gap $\gamma_L>0$. More details about these questions are given in Appendix~\ref{sec:appendix_spectral_semigroups}.

The fact that the eigenvalue $0$ is isolated from the rest of the spectrum $ \sigma(L_{ \psi, \delta})\smallsetminus\{0\}$ implies that $H_{ d}^{ -1}$ can be decomposed into a direct sum $T_{\psi,\gd}\oplus N_{\psi,\gd}$, where $T_{  \psi, \delta}= \Span(\partial_{ \theta} q_{ \psi, \delta})$ such that the spectrum of the restriction of $L_{ \psi, \delta}$ to $N_{ \psi, \delta}$ (resp. $T_{ \psi, \delta}$) is $ \sigma(L_{ \psi, \delta})\smallsetminus\{0\}$ (resp. $\{0\}$). We denote by $P^{ 0}_{ \psi, \delta}$ the projection on $T_{ \psi, \delta}$ along $N_{ \psi, \delta}$ and $P^{ s}_{ \psi, \delta}=1-P^{ 0}_{ \psi, \delta}$. Both $P^{ 0}_{ \psi, \delta}$ and $P^s_{ \psi, \delta}$ commute with $L_{\psi,\gd}$. In particular, for all $\psi\in \bbT$, $ \delta>0$, there exists a linear form $\mathtt{p}_{ \psi, \delta}$ satisfying, for all $u\in H_{ d}^{ -1}$
\begin{equation}\label{eq:def p psi}
 P^{ 0}_{ \psi, \delta} u\, =\,\mathtt{p}_{ \psi, \delta} (u) \partial_{ \theta}q_{ \psi, \delta} \, .
\end{equation}
We also denote by $C_{ P}$ and $C_{ L}$ positive constants such that for all $u\in H_{ d}^{ -1}$, $t>0$:
\begin{align}
\Vert P^{ 0}_{ \psi, \delta} u\Vert_{-1,d}&\leq\,  C_P \Vert  u\Vert_{-1,d}\, ,\\
\Vert P^s_{ \psi, \delta} u\Vert_{-1,d}&\leq\,  C_P \Vert  u\Vert_{-1,d}\, ,\\
\left\Vert e^{t L_{\psi,\gd}} P^s_{ \psi, \delta} u \right\Vert_{-1,d}\, &\leq\, C_L e^{-\gamma_L t}\left\Vert P^s_{ \psi, \delta} u\right \Vert_{-1,d}\, ,\label{ineq:L 1}\\
 \left\Vert e^{t L_{\psi,\gd}} u \right\Vert_{-1,d}\, &\leq\, C_L \left(1+\frac{1}{\sqrt{t}} \right)\left\Vert  u\right \Vert_{-2,d}\, .\label{ineq:L 2}
\end{align}
Inequality \eqref{ineq:L 1} is a consequence of \cite{Henry1981}, Theorem~1.5.3, p. 30 and \eqref{ineq:L 2} is proved in Proposition~\ref{prop:reg_semigroup} in Appendix~\ref{sec:appendix_spectral_semigroups}. Once again, we will often drop the dependency in the parameters $ \psi$ or $ \delta$ in $P^{ 0}_{ \psi, \delta}$ and $P^{ s}_{ \psi, \delta}$ for simplicity of notations.

A consequence of the contraction \eqref{ineq:L 1} along the space $N_{ \psi, \delta}$ is that $M$ is locally stable with respect
to the evolution given by \eqref{eq:FP_Kur_finite} (see for example exercise $6^*$ of the Chapter 6 of \cite{Henry1981}, or Theorem 2.2 of \cite{MR3207725}
for our particular model): for any $p_0$ in a neighborhood of $M$, there exists $\psi\in \bbT$ such that the solution of \eqref{eq:FP_Kur_finite}
converges to $q_{\psi,\gd}$ exponentially fast (with rate given by $\gamma_L$).

\subsection{Dynamics of the empirical measure}
\label{sec:mild_formulation_intro}
The starting point of the proof of Theorem~\ref{th:main} is to write the semi-martingale decomposition (see Proposition~\ref{prop:SPDE_weak}) of the difference between the empirical measure $ \mu_{ N, t}$ defined in \eqref{eq:def_mu_finite} and any element of $q_{\psi,\gd}\in M$. Namely, define the process $ t \mapsto \nu_{N, t}$, $t\geq0$ by
\begin{equation}
\label{eq:nut}
\nu_{ N, t}^{ i}\, :=\,  \mu_{ N, t}^{ i} - q_{ \psi,\gd}^{ i},\ i=-d,\ldots, d.
\end{equation}
The point is to write a mild formulation of this semi-martingale decomposition that makes sense in the space $H_{ d}^{ -1}$ (recall that $\mu_{N,t}$ and $\nu_{N,t}$ belong to $H^{-1}_d$ due to \eqref{eq:proba in Hminus1}). This mild formulation involves in particular the semi-group $e^{tL_{ \psi, \delta}}$ of the operator $L_{  \psi, \delta}$ \eqref{eq:def_Lpsiq} so that one can take advantage of the contraction properties of this semi-group in the neighborhood of the manifold $M$. 
\begin{proposition}
\label{prop:SPDE_mild}
For all $K>1$, for all $ 0\leq\delta\leq \delta(K)$, the process $ (\nu_{ N, t})_{ t\geq0}$ defined by \eqref{eq:nut} satisfies the following stochastic partial differential equation in $\cC([0, +\infty), H_{ d}^{ -1})$, written in a mild form:
\begin{equation}
\label{eq:SPDE_mild}
\nu_{ N, t}\, =\,  e^{ tL_{ \psi, \delta}}\nu_{ N, 0} +\int_0^t  e^{ (t-s)L_{ \psi, \delta}} \left( D_{ N} - \partial_{ \theta}R_{ N}( \nu_{ N, s})\right) \dd s + Z_{ N, t},\ N\geq1,\ t\geq0\, ,
\end{equation}
where 
\begin{equation}
\label{eq:D}
D_{ N}\, =\,  D_{ N, \psi, \delta}:= -\partial_{ \theta} \left(q_{ \psi, \delta}\sum_{k=-d}^d (\lambda_{ N}^k-\lambda^k) (J*q^k_{ \psi, \delta})\right) \, ,
\end{equation}
\begin{multline}
\label{eq:R}
R_{ N}( \nu_{ N, s})\, =\, R_{ N, \psi, \delta}(\nu_{ N, s})\, :=\,  \left( \sum_{k=-d}^d (\lambda_{ N}^k-\lambda^k)J*q^k_{ \psi, \delta}\right) \nu_{ N, s} + q_{ \psi, \delta}\sum_{k=-d}^d (\lambda_{ N}^k-\lambda^k)(J*\nu_{ N, s}^{ k}) \\ + \left(\sum_{k=-d}^d \lambda_{ N}^kJ*\nu_{ N, s}^k\right)\nu_{ N, s}\, ,
\end{multline}
and $Z_{ N, t}$ is the limit in $H_{ d}^{ -1}$ as $t^{ \prime}\nearrow t$ of $Z_{ N, t, t^{ \prime}}$ defined by
\begin{equation}
\label{eq:SPDE_Ztth}
Z_{ N, t, t^{ \prime}}(h)\, =\, \sum_{i=-d}^d \frac{\lambda^i}{N^i}\sum_{j=1}^{N^i} \int_{0}^{t^{ \prime}}\partial_\theta\left[\left(e^{(t-s)L_\psi^*}h\right)^i\right](\varphi^i_j(s))\dd B^i_j(s)\, ,
\end{equation}
that we denote
\begin{equation}
Z_{ N, t}(h)\, =\, \sum_{i=-d}^d \frac{\lambda^i}{N^i}\sum_{j=1}^{N^i} \int_{0}^{t}\partial_\theta\left[\left(e^{(t-s)L_\psi^*}h\right)^i\right](\varphi^i_j(s))\dd B^i_j(s)\, ,
\end{equation}
and where all the terms in \eqref{eq:SPDE_mild} make sense as elements of $\cC([0, \infty), H^{ -1}_{ d})$.
\end{proposition}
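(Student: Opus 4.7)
The plan is to derive a weak (distributional) semi-martingale decomposition for $\nu_{N,t}$ first, then convert it into the mild form \eqref{eq:SPDE_mild} by a stochastic variation-of-constants argument. For a smooth test function $f:\bbT\to\bbR$ and each $i=-d,\ldots,d$, Itô's formula applied to $f(\varphi_j^i(t))$ together with \eqref{eq:eds_Kur} and averaging over $j=1,\ldots,N^i$ gives
\[
\dd \langle \mu_{N,t}^i,f\rangle\, =\, \tfrac12\langle \mu_{N,t}^i,f''\rangle\dd t + \delta\omega^i\langle \mu_{N,t}^i,f'\rangle\dd t + \Big\langle \mu_{N,t}^i,\, f'\!\cdot\!\sum_{k}\lambda_N^k\, J*\mu_{N,t}^k\Big\rangle \dd t + \dd m_{N,t}^i(f),
\]
where $m_{N,t}^i(f)$ is the martingale coming from the Brownian increments. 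Because $q_{\psi,\delta}$ is a stationary solution of \eqref{eq:FP_Kur_finite} (written with the \emph{true} weights $\lambda^k$), subtracting the weak form of the PDE for $q_{\psi,\delta}^i$ and writing $\mu_{N,t}^i=q_{\psi,\delta}^i+\nu_{N,t}^i$ and $\lambda_N^k=\lambda^k+(\lambda_N^k-\lambda^k)$ reduces the problem to a bookkeeping expansion. The terms linear in $\nu$ with coefficients $\lambda^k$ reassemble into $L_{\psi,\delta}\nu_{N,t}$ (see \eqref{eq:def_Lpsiq}); the only term that contains neither $\nu$ nor the thermal noise is $\partial_\theta\big(q_{\psi,\delta}^i\sum_k(\lambda_N^k-\lambda^k)J*q_{\psi,\delta}^k\big)$, which is exactly $-D_N^i$ by \eqref{eq:D}; every remaining term is either bilinear in $\nu$ or linear in $\nu$ with coefficient $(\lambda_N^k-\lambda^k)$, and gathering them one recognizes precisely $-\partial_\theta R_N(\nu_{N,s})$ from \eqref{eq:R}. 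This yields the strong form
\[
\dd \nu_{N,t}\, =\, \Big(L_{\psi,\delta}\nu_{N,t}+D_N-\partial_\theta R_N(\nu_{N,t})\Big)\dd t + \dd \widetilde Z_{N,t},
\]
where $\widetilde Z_{N,t}$ is the distribution-valued martingale obtained from the Brownian terms.

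To pass to the mild form, I would apply the standard Duhamel formula for the analytic semigroup $(e^{tL_{\psi,\delta}})_{t\geq0}$ constructed in Appendix~\ref{sec:appendix_spectral_semigroups}: test \eqref{eq:SPDE_mild} against $e^{(t-s)L_{\psi,\delta}^*}h$ for $h\in H_d^1$, apply Itô to the time-dependent functional $s\mapsto \langle \nu_{N,s}, e^{(t-s)L_{\psi,\delta}^*}h\rangle$ on $[0,t']$ with $t'<t$, and use the fact that the time derivative of $e^{(t-s)L_{\psi,\delta}^*}h$ is $-L_{\psi,\delta}^*e^{(t-s)L_{\psi,\delta}^*}h$. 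The deterministic part then yields the first three summands of \eqref{eq:SPDE_mild} directly, while the martingale contribution gives the truncated stochastic convolution $Z_{N,t,t'}$ defined in \eqref{eq:SPDE_Ztth}.

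The main technical obstacle is to make sense of the limit $t'\nearrow t$ of $Z_{N,t,t'}$ as an element of $H_d^{-1}$, since the integrand $\partial_\theta\bigl[(e^{(t-s)L_{\psi,\delta}^*}h)^i\bigr]$ is singular at $s=t$. I would control it via an Itô isometry: for $t_1<t_2<t$ and test $h\in H_d^1$ with $\|h\|_{1,d}\leq 1$,
\[
\bbE\bigl[(Z_{N,t,t_2}(h)-Z_{N,t,t_1}(h))^2\bigr]\, \leq\, \frac{C}{N}\int_{t_1}^{t_2}\!\bigl\|\partial_\theta e^{(t-s)L_{\psi,\delta}^*}h\bigr\|_{\infty,d}^{2}\dd s.
\]
By duality, the smoothing estimate \eqref{ineq:L 2} for $e^{\cdot L_{\psi,\delta}^*}$ and Sobolev embedding in dimension one yield a bound $\|\partial_\theta e^{(t-s)L_{\psi,\delta}^*}h\|^2_{\infty,d}\leq C(t-s)^{-1-\alpha}$ for some $\alpha<1$ when measured against the $H_d^{-1}$-norm of $h$; the singularity is integrable, so the net $(Z_{N,t,t'})_{t'<t}$ is Cauchy in $L^2(\Omega,H_d^{-1})$. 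This defines $Z_{N,t}$, and a standard Kolmogorov-type argument using again the Itô isometry together with \eqref{ineq:L 2} provides the continuity $t\mapsto Z_{N,t}\in H_d^{-1}$. The continuity in $H_d^{-1}$ of the deterministic terms is immediate from the analyticity of $e^{tL_{\psi,\delta}}$ and the smoothness of $D_N$ and $R_N(\nu)$, so \eqref{eq:SPDE_mild} holds in $C([0,\infty),H_d^{-1})$.
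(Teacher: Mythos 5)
Your approach mirrors the paper's: Itô on the particle system, subtraction of the stationary profile to isolate $L_{\psi,\delta}\nu$, $D_N$, and $R_N(\nu)$, then Duhamel with the adjoint semigroup $e^{(t-s)L_{\psi,\delta}^*}$ and Itô-isometry control of the stochastic convolution. (The paper first proves a weak formulation, Proposition~\ref{prop:SPDE_weak}, which it then tests directly against $F_s=e^{(t-s)L_{\psi,\delta}^*}h$ rather than re-applying Itô to a time-dependent pairing, and it defers the moment bound on $Z$ to Lemma~\ref{lem:moments Z}, which is proved anyway because higher moments are needed elsewhere.) One slip worth correcting: you assert $\|\partial_\theta e^{(t-s)L^*_{\psi,\delta}}h\|_{\infty,d}^2\leq C(t-s)^{-1-\alpha}$ ``for some $\alpha<1$'' and then declare the singularity at $s=t$ integrable, but an exponent $-1-\alpha$ is integrable there only when $\alpha<0$. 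The achievable bound (gain $\tfrac12+\varepsilon$ derivatives via Proposition~\ref{prop:reg_semigroup_adjoint} at cost $(t-s)^{-1/4-\varepsilon/2}$, then Sobolev embedding $H^{1/2+\varepsilon}(\bbT)\hookrightarrow L^\infty$) gives $(t-s)^{-1/2-\varepsilon}$, i.e.\ $\alpha=-\tfrac12+\varepsilon<0$, so the conclusion stands, but the stated range of $\alpha$ admits values that would break the argument. Relatedly, the bound is against the $H^{1}_d$-norm of $h$ — duality with $H^{1}_d$ is what places $Z$ in $H^{-1}_d$ — not the $H^{-1}_d$-norm as you wrote. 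Finally, you pass from the ``strong form'' to the mild form rather casually: since $R_N(\nu_{N,s})$ only lives in $H^{-1}_d$, giving rigorous meaning to $\int_0^t e^{(t-s)L_{\psi,\delta}}\partial_\theta R_N(\nu_{N,s})\dd s$ as a Bochner integral in $H^{-1}_d$ requires an approximation by regular $\nu$'s together with the smoothing estimate $\|e^{(t-s)L_{\psi,\delta}}\partial_\theta u\|_{-1,d}\leq C\big(1+(t-s)^{-1/2}\big)\|u\|_{-1,d}$ of Proposition~\ref{prop:reg_semigroup}; the paper spells this out explicitly.
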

The proof of Proposition~\ref{prop:SPDE_mild} may be found in Section~\ref{sec:mild_formulation}.
The term $Z_{N,t}$ in \eqref{eq:SPDE_mild} represents the effect of the thermal noise on the system. The term involving $D_N$ is the one
that produces the drift we are after on the time scale $N^{1/2}t$, when the empirical measure $\mu_{N,t}$ is close to the manifold $M$.
To make this drift appear, we rely on an iterative procedure, as explained in Section \ref{subsec:dyn M}.

\subsection{Moving closer to the manifold $M$}
\label{sec:closer_to_manifold_intro}
We place ourselves in the framework of Theorem~\ref{th:main}: we fix $ \varepsilon_{ 0}>0$ and suppose the existence of a probability measure $p_{ 0}\in H_{ d}^{ -1}$ such that ${\rm dist}_{ H_{ d}^{ -1}}( p_{ 0}, M)\leq \varepsilon_{ 0}$ with $ \bP \left( \left\Vert \mu_{ N, 0} - p_{ 0}\right\Vert_{ -1, d}\geq \varepsilon\right) \rightarrow 0$ as $N\to \infty$, for all $ \varepsilon>0$. The constant $ \varepsilon_{ 0}$ will be chosen small enough in Section~\ref{sec:approach}.

The first step in proving our result is to show that the empirical measure $ \mu_{ N, t}$ reaches a neighborhood of size $N^{-1/2}$
in a time of order $\log N$. We use the projection defined in the following lemma, whose proof can be found in Appendix \ref{sec:appendix projections}, along with several regularity results.
\begin{lemma}\label{lem:existence projM}
There exists $ \sigma>0$ such that for all $h$ such that ${\rm dist}_{ H_{ d}^{ -1}}(h, M)\leq \sigma$, there exists a unique phase $\psi=:\proj_M(h)\in \bbT $ such that
$P^{ 0}_\psi(h-q_\psi)=0$ and the mapping $h \mapsto \proj_{ M}(h)$ is $\cC^{ \infty}$.
\end{lemma}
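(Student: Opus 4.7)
The strategy is to recast the defining relation $P^{0}_{\psi}(h - q_{\psi}) = 0$ as a scalar equation in $\psi \in \bbT$ and then apply the implicit function theorem in a Banach space setting. Indeed, by \eqref{eq:def p psi}, the condition $P^{0}_{\psi}(h - q_{\psi}) = 0$ is equivalent to $\mathtt{p}_{\psi, \delta}(h - q_{\psi,\delta}) = 0$, a real-valued equation. So I define
\begin{equation}
F : \bbT \times H_{d}^{-1} \longrightarrow \bbR, \qquad F(\psi, h) := \mathtt{p}_{\psi,\delta}(h - q_{\psi,\delta}),
\end{equation}
and look for a smooth solution $\psi = \psi(h)$ with $F(\psi(h), h) = 0$. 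Observe that $F(\psi_0, q_{\psi_0,\delta}) = \mathtt{p}_{\psi_0,\delta}(0) = 0$ for every $\psi_0 \in \bbT$, so the diagonal $\{(\psi, q_{\psi,\delta}) : \psi \in \bbT\}$ is a tautological family of solutions; the task is to show these extend smoothly and uniquely to a neighborhood of $M$.

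The first key point is smoothness of $F$. The rotational invariance $q_{\psi,\delta}(\theta) = q_{0,\delta}(\theta - \psi)$ together with $q_{0,\delta} \in C^{\infty}(\bbT)$ shows that $\psi \mapsto q_{\psi,\delta}$ is $C^{\infty}$ with values in $H_{d}^{-1}$ (actually into any Sobolev space). For the map $\psi \mapsto \mathtt{p}_{\psi,\delta}$, I use the Riesz projection
\begin{equation}
P^{0}_{\psi,\delta} \, = \, \frac{1}{2\pi i}\oint_{\Gamma} R(\lambda, L_{\psi,\delta})\,\dd \lambda,
\end{equation}
where $\Gamma$ is a small contour around $0$ separating the eigenvalue $0$ from $\sigma(L_{\psi,\delta})\setminus\{0\}$; such a $\Gamma$ exists uniformly in $\psi \in \bbT$ thanks to the spectral gap $\gamma_{L}>0$ of Section~\ref{sec:lin stab} (see Appendix~\ref{sec:appendix_spectral_semigroups}). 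Since $\psi \mapsto L_{\psi,\delta}$ depends $C^{\infty}$-ly (in fact analytically) on $\psi$ by \eqref{eq:def_Lpsiq} and the smoothness of $q_{\psi,\delta}$, standard analytic perturbation theory yields that $\psi \mapsto P^{0}_{\psi,\delta}$, and therefore the linear form $\mathtt{p}_{\psi,\delta}$, is $C^{\infty}$ into $\mathcal{L}(H_{d}^{-1},\bbR)$.

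The second key point is the nondegeneracy condition. Differentiating $F$ in $\psi$ at $(\psi_0, q_{\psi_0,\delta})$, and using linearity of $\mathtt{p}_{\psi,\delta}$ together with $\partial_{\psi} q_{\psi,\delta} = -\partial_{\theta} q_{\psi,\delta}$,
\begin{equation}
\partial_{\psi} F(\psi_0, q_{\psi_0,\delta}) \, = \, (\partial_{\psi} \mathtt{p}_{\psi,\delta})_{|\psi_0}(0) + \mathtt{p}_{\psi_0,\delta}(\partial_{\theta} q_{\psi_0,\delta}) \, = \, 0 + 1 \, = \, 1,
\end{equation}
since $\mathtt{p}_{\psi_0,\delta}(\partial_{\theta} q_{\psi_0,\delta}) = 1$ by the very definition of $\mathtt{p}_{\psi_0,\delta}$ (the projection $P^{0}_{\psi_0,\delta}$ acts as the identity on its range $\Span(\partial_{\theta} q_{\psi_0,\delta})$). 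Having $\partial_{\psi} F \neq 0$ on the whole solution diagonal, the Banach-space implicit function theorem provides, for each $\psi_{0} \in \bbT$, an open neighborhood $U_{\psi_0} \subset H_{d}^{-1}$ of $q_{\psi_0,\delta}$ and a unique $C^{\infty}$ map $h \mapsto \psi(h)$ on $U_{\psi_0}$ solving $F(\psi(h), h) = 0$. Compactness of $M \cong \bbT$ lets me extract a finite subcover and take $\sigma>0$ small enough that the $\sigma$-neighborhood of $M$ lies inside $\bigcup_{i} U_{\psi_{i}}$; uniqueness on each piece, together with uniqueness on overlaps (one can shrink $\sigma$ further to avoid winding issues in $\bbT$), then produces a global $C^{\infty}$ map $\proj_{M}$ as claimed.

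The only substantive obstacle is the analytic perturbation step: one must verify that the Riesz projection formula is licit on the (Gelfand-rigged) space $H_{d}^{-1}$, that the contour $\Gamma$ can be chosen independent of $\psi$, and that the resulting $\mathtt{p}_{\psi,\delta}$ is truly continuous on $H_{d}^{-1}$. These are a consequence of the sectoriality of $L_{\psi,\delta}$ and the uniformity of the spectral gap stated in Section~\ref{sec:lin stab}, and are essentially a byproduct of Appendix~\ref{sec:appendix_spectral_semigroups}. Everything else is routine IFT bookkeeping.
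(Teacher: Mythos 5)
Your proof is correct and takes essentially the same route as the paper's: both reduce the defining condition to the scalar equation $\mathtt{p}_{\psi,\delta}(h - q_{\psi,\delta}) = 0$, establish smoothness of $\psi \mapsto \mathtt{p}_{\psi,\delta}$ via analytic perturbation of the isolated eigenvalue $0$ (the paper cites Kato's theorems on real-holomorphic families, which rest on exactly the Riesz contour-integral representation you write out), check nondegeneracy of the $\psi$-derivative along the diagonal $\{(\psi, q_{\psi,\delta})\}$, and invoke the implicit function theorem. Your value $\partial_\psi F = +1$ is consistent with $\partial_\psi q_{\psi,\delta} = -\partial_\theta q_{\psi,\delta}$ and $\mathtt{p}_{\psi,\delta}(\partial_\theta q_{\psi,\delta}) = 1$; the paper records $-1$, apparently a sign slip, but either way the derivative is nonzero and the IFT applies.
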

From now on, we fix a sufficiently small constant $\zeta$, more precisely satisfying
\begin{equation}\label{ineq:choice zeta}
 \zeta\, <\, \frac 18\, .
\end{equation}
We prove the following result: 
\begin{proposition}\label{prop:approach_M}
Under the above hypotheses, there exists a phase $\theta_0\in \bbT$, an event $B^N$ such that $\bP(B^N)\rightarrow 1$ and a constant $c>0$
such that for all $\gep>0$, for $N$ sufficient large, on the event $B^{ N}$, the projection $\psi_0= \psi_{ 0}^{ N}=\proj_M\big(\mu_{N,N^{1/2} t_0^N}\big)$ is well-defined and 
 \begin{equation}
  \left\Vert \mu_{N,N^{1/2} t_0^N}-q_{\psi_0} \right\Vert_{-1,d} \, \leq \, N^{-1/2+2\zeta} \, ,
 \end{equation}
and
 \begin{equation}
  |\psi_0-\theta_0|\, \leq\, \gep\, ,
 \end{equation}
 where $t_0^N=c N^{-1/2}\log N$.
\end{proposition}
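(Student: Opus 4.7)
The plan is to combine a short-time mean-field argument with a longer-time contraction argument based on the mild formulation of Proposition~\ref{prop:SPDE_mild}. First I define $\theta_0\in\bbT$ as follows: provided $\gep_0$ is chosen smaller than the basin of attraction of $M$ given by Theorem~2.5 of \cite{MR3207725}, the deterministic solution $p_t$ of \eqref{eq:FP_Kur_finite} starting from $p_0$ converges exponentially (with rate $\gamma_L$) to some $q_{\theta_0}\in M$. For any small $\gep>0$, fix $T_\gep$ such that $\|p_{T_\gep}-q_{\theta_0}\|_{-1,d}\leq \gep/4$. By standard quenched propagation of chaos on the bounded interval $[0,T_\gep]$ (as in \cite{daiPra96,Lucon2011}), combined with the hypothesis that $\|\mu_{N,0}-p_0\|_{-1,d}\to 0$ in probability, there is an event $B^N_1$ with $\bP(B^N_1)\to 1$ on which $\|\mu_{N,T_\gep}-q_{\theta_0}\|_{-1,d}\leq \gep/2$. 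By Lemma~\ref{lem:existence projM}, the phase $\psi_1:=\proj_M(\mu_{N,T_\gep})$ is then well-defined on $B^N_1$, and $|\psi_1-\theta_0|\leq C\gep$ by smoothness of $\proj_M$.

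Next I apply the mild formulation \eqref{eq:SPDE_mild} on $[T_\gep, N^{1/2}t_0^N]$ with $\psi=\psi_1$. By definition of $\psi_1$, one has $P^{0}_{\psi_1,\gd}\nu_{N,T_\gep}=0$, so the initial term lies entirely in the stable subspace, where the semigroup contracts exponentially. Introduce the stopping time $\tau_N:=\inf\{t\geq T_\gep : \|\nu_{N,t}\|_{-1,d}>\eta\}$ for a small fixed $\eta>0$. I will show by a bootstrap argument that each of the four contributions in \eqref{eq:SPDE_mild} is bounded by $\eta/4$ on $[T_\gep,\tau_N\wedge N^{1/2}t_0^N]$ with high probability, forcing $\tau_N>N^{1/2}t_0^N$. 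The estimates are: \emph{(i)} the contraction \eqref{ineq:L 1} applied to the stable initial term yields $\|e^{(t-T_\gep)L_{\psi_1,\gd}}\nu_{N,T_\gep}\|_{-1,d}\leq C_L\gep\, e^{-\gamma_L(t-T_\gep)}$; \emph{(ii)} admissibility (Definition~\ref{def:disorder}) gives $\|D_N\|_{-1,d}\leq CN^{-1/2+\zeta}$, so the stable projection of the $D_N$-contribution is bounded by $C\|D_N\|_{-1,d}/\gamma_L$, while its tangent projection grows only linearly and remains $O(N^{-1/2+\zeta}\log N)=o(N^{-1/2+2\zeta})$ over the window; \emph{(iii)} the nonlinear term involves $\partial_\theta R_N(\nu_{N,s})$, and I use the smoothing bound \eqref{ineq:L 2} to absorb the derivative at the cost of an integrable $1/\sqrt{t-s}$ singularity, together with the quadratic bound $\|R_N(\nu)\|_{-1,d}\leq C(N^{-1/2+\zeta}\|\nu\|_{-1,d}+\|\nu\|_{-1,d}^2)$, which closes the bootstrap; \emph{(iv)} the martingale term $Z_{N,t}$ is controlled by It\^o isometry, each increment contributing variance of order $1/N$, combined with \eqref{ineq:L 2} applied to the adjoint semigroup acting on the orthonormal basis of $H^1_d$ from \eqref{eq:proba in Hminus1} and a Doob-type maximal inequality, which yields $\sup_{t\in[T_\gep,N^{1/2}t_0^N]}\|Z_{N,t}\|_{-1,d}\leq CN^{-1/2+\zeta}$ with high probability.

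Choosing the constant $c$ so that $C_L\gep\, e^{-\gamma_L c\log N}\leq N^{-1/2+\zeta}$ (any $c\geq 1/(2\gamma_L)$ works for $N$ large), all four contributions are $O(N^{-1/2+2\zeta})$. This gives $\|\mu_{N,N^{1/2}t_0^N}-q_{\psi_1}\|_{-1,d}\leq N^{-1/2+2\zeta}$ on a good event $B^N$ of probability tending to~$1$. Since this distance is much smaller than $\sigma$ for $N$ large, Lemma~\ref{lem:existence projM} shows that $\psi_0:=\proj_M(\mu_{N,N^{1/2}t_0^N})$ is well-defined and $|\psi_0-\psi_1|=O(N^{-1/2+2\zeta})$; combined with $|\psi_1-\theta_0|\leq C\gep$ and the arbitrariness of $\gep$, this yields the statement.

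The main technical obstacle is estimate \emph{(iv)}: upgrading the pointwise It\^o isometry bound to a uniform bound on $\sup_{t}\|Z_{N,t}\|_{-1,d}$ over a window of length $\sim\log N$. Pointwise control of the variance in $H^{-1}_d$ follows directly, but a supremum bound requires a discretization (or chaining) argument to estimate the modulus of continuity of $t\mapsto Z_{N,t}$, where the regularization \eqref{ineq:L 2} of $e^{sL^*_{\psi_1}}$ plays a crucial role to handle the $\partial_\theta$ in \eqref{eq:SPDE_Ztth}; the approach will follow the strategy developed in \cite{Bertini:2013aa} for the non-disordered case, adapted to the present setting where the adjoint semigroup is no longer self-adjoint.
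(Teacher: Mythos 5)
Your first stage (running the deterministic flow $p_t$ from $p_0$ to a phase $q_{\theta_0}$ and using a finite-time law of large numbers to bring $\mu_{N,\cdot}$ within $O(\gep)$ of $M$) is in substance the paper's first step, and identifying $\theta_0$ this way is fine. The noise control (iv) via GRR-type chaining is also the right tool and is what the paper does. The problem is the middle of your argument.

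You run the mild formulation of Proposition~\ref{prop:SPDE_mild} around a \emph{single fixed} reference profile $q_{\psi_1}$ over the whole window $[T_\gep,N^{1/2}t_0^N]$ of length $O(\log N)$, with a fixed stopping threshold $\eta$, and claim all four contributions are $O(N^{-1/2+2\zeta})$. This cannot work, because $e^{(t-s)L_{\psi_1,\gd}}$ does not contract the tangent direction $\partial_\theta q_{\psi_1}$, and the quadratic part of $R_N$ pushes mass into that direction. Concretely, the tangent component of $\int_{T_\gep}^t e^{(t-s)L_{\psi_1,\gd}}\partial_\theta R_N(\nu_{N,s})\dd s$ is $\int_{T_\gep}^t \mathtt{p}_{\psi_1}(\partial_\theta R_N(\nu_{N,s}))\dd s$, with no decay in $t-s$, and the bound $\Vert R_N(\nu)\Vert_{-1,d}\leq C(N^{-1/2+\zeta}\Vert\nu\Vert_{-1,d}+\Vert\nu\Vert_{-1,d}^2)$ gives an integrand of size up to $C\eta^2$. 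Over a window of length $c\log N$ this accumulates to $\sim \eta^2\log N$, which forces $\eta\lesssim 1/\log N$; this is incompatible with the initial datum $\Vert\nu_{N,T_\gep}\Vert_{-1,d}\sim\gep$ fixed, so the stopping time $\tau_N$ is triggered before $N^{1/2}t_0^N$. Even if one replaces the constant $\eta$ by a decaying envelope $\eta(s)\sim\gep e^{-\gamma_L(s-T_\gep)}+N^{-1/2+2\zeta}$, the integrated tangent contribution is $\sim\int\eta(s)^2\dd s\sim\gep^2/\gamma_L$, i.e.\ a constant of order $\gep^2$: this is exactly the (nonzero, second-order) phase shift $\psi_\infty-\psi_1$ picked up by the deterministic flow during the approach to $M$ starting from a purely stable perturbation. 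Hence $\Vert\mu_{N,N^{1/2}t_0^N}-q_{\psi_1}\Vert_{-1,d}$ is bounded below by a quantity of order $\gep^2$, and your intermediate claim $\Vert\mu_{N,N^{1/2}t_0^N}-q_{\psi_1}\Vert_{-1,d}\leq N^{-1/2+2\zeta}$ (and the consequent $|\psi_0-\psi_1|=O(N^{-1/2+2\zeta})$) is false for fixed $\gep$.

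The paper circumvents this by an iterative re-projection scheme in its second and third steps: the interval is subdivided into $O(\log N)$ blocks of fixed length $\tilde T$; at the start of each block one re-projects onto $M$, setting $\tilde\psi_{n-1}=\proj_M(\mu_{\tilde T_{n-1}})$ and $\tilde\nu_{n,0}=\mu_{\tilde T_{n-1}}-q_{\tilde\psi_{n-1}}$, which by construction has zero tangent component. On each block the nonlinear tangent growth is only $O(\tilde T h_n^2)=O(h_n^2)$, which is absorbed by the geometric halving $h_{n+1}=h_n/2$, so the cumulative phase shift over all blocks is $\sum_n O(h_n^2)=O(\gep_0^2)$ while the \emph{distance to $M$} is driven down to $N^{-1/2+2\zeta}$. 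This re-projection is the ingredient your sketch is missing; without it, a single mild formulation around a frozen phase cannot separate the (macroscopic but harmless) tangent drift from the (shrinking) transverse distance that the proposition actually estimates.
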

We refer to Section~\ref{sec:approach} for a proof of this result. Since it relies on a discretization scheme similar to the one 
we introduce in the next paragraph, we leave the details to Section~\ref{sec:approach}.

\subsection{Dynamics on the manifold $M$}\label{subsec:dyn M}

We now place ourselves on the event $B^N$ (see Proposition \ref{prop:approach_M}), so that on the time
$N^{1/2}t_0^N$ we have $\Vert \mu_{N,N^{1/2} t_0^N}-q_{\psi_0}\Vert_{-1,d} \leq  N^{-1/2+2\zeta}$
where $\psi_0= \proj_M\big(\mu_{N,N^{1/2} t_0^N}\big)$. The point is to analyse the dynamics of \eqref{eq:SPDE_mild} on a time scale of order $ N^{ 1/2}$, using the knowledge we have on stability of the manifold $M$ (recall \eqref{eq:def_M}). The following iterative scheme we introduce is similar to ones used in \cite{Bertini:2013aa,cf:BDMP}.
\subsubsection{The iterative scheme}  We divide the evolution of the dynamics \eqref{eq:SPDE_mild} in time intervals $[T_n, T_{n+1}]$ with $T_n=N^{1/2}t_0^N+nT$ where $T$ is a constant independent of $N$, satisfying $T\geq 1$ and
\begin{equation}
\label{eq:hyp_bound_T}
e^{-\gamma_L T}\, \leq\,  \frac{1}{4C_L C_P}\, ,
\end{equation}
where the constants $C_L$ and $C_P$ where introduced in Section \ref{sec:lin stab}.
The number of steps $n_f$ is chosen as
\begin{equation}
 n_f\, :=\, \left\lfloor \frac{N^{1/2}}{T}(t_f-t^N_0)\right\rfloor\, .
\end{equation}
The intuition of this discretization is the following: if for a certain $n=0,1,\ldots, n_{ f}-1$, the process $\mu_{T_n}= \mu_{ N, T_{ n}}$ is close enough to the manifold $M$, we can define the phase $\ga_n$ of its projection on $M$ by:
\begin{equation}
 \ga_{n}\,: =\, \proj_M(\mu_{N, T_{n}})\, .
\end{equation}
This projection is in particular well defined when $\Vert \mu_{T_n}-q_{\ga_{n-1}} \Vert_{-1,d} \leq \gs$, where the constant $\gs>0$ is given by Lemma~\ref{lem:existence projM}.

To ensure that the process does not escape too far from $M$, we introduce the following stopping couple (where the infimum corresponds to the lexicographic order):
\begin{equation}
\label{eq:stopping_couple}
 (n_\tau,\tau)\, =\, \inf\{(n,t)\in \{1,\ldots,n_f\}\times [0,T] :\, 
 \Vert \mu_{T_{n-1}+t}-q_{\ga_{n-1}}\Vert_{-1,d}\geq \gs\}\, .
\end{equation}
Using \eqref{eq:stopping_couple}, we can define the following sequence of stopping times $(\tau^n, n=1\, \ldots\, n_f)$:
\begin{equation}\label{eq:tau_n}
  \tau^n\, :=\, 
  \left\{
  \begin{array}{ll}
   T & \text{if}\quad n<n_\tau\, ,  \\
   \tau & \text{if} \quad n\geq n_\tau\, ,
  \end{array}
  \right.
\end{equation}
and consider the stopped process $\mu_{(n\wedge n_\tau-1)T+t\wedge \tau^n}$. The projection of this stopped process is well defined on the whole interval $[T_0,T_{n_f}]$, so that we can now define rigorously the random phases  $\psi_{n-1}$ defined as
\begin{equation}
\label{eq:psi_n}
 \psi_{n-1}\, :=\, \proj_M(\mu_{(n\wedge n_\tau-1)T+t\wedge \tau^n}),\ n=1,\ldots,n_f\, .
\end{equation}
$ \psi_{ n-1}$ corresponds to the phase of the projection of the process $\mu$ unless it has been stopped and in that case, it is the phase at the stopping time. The object of interest here is the process $\nu_{n,t}$ defined for $n=1,\ldots,n_f$ as
\begin{equation}
\label{eq:nu_stopped}
 \nu_{n,t}\, :=\, \mu_{(n\wedge n_\tau-1)T+t\wedge \tau^n}-q_{\psi_{n-1}}\, .
\end{equation}
Using \eqref{eq:SPDE_mild}, we see that this process satisfies the mild equation
\begin{equation}\label{eq:mild_nu_n}
 \nu_{n,t}\, =\, e^{(t\wedge \tau^n) L_{{\psi_{n-1}}}}\nu_{n,0}-\int_0^{t\wedge\tau^n}
 e^{(t\wedge\tau^n-s)L_{{\psi_{n-1}}}}(D_{\psi_{n-1}}
 +R_{\psi_{n-1}}(\nu_{n,s}))\dd s + Z_{n,t\wedge \tau^n}\, ,
\end{equation}
where $D_{ \psi_{ n-1}}:= D_{ N, \psi_{ n-1}, \delta}$ (recall \eqref{eq:D}), $R_{ \psi_{ n-1}}(\nu_{ n}):= R_{ N, \psi_{ n-1}, \delta}(\nu_{ n})$ (recall \eqref{eq:R}) and $Z_{n,t}$ is defined as
\begin{equation}
\label{eq:noise_Znt}
 Z_{n,t}(h)\, =\, \sum_{i=-d}^d\frac{\gl^i}{N^i}\sum_{j=1}^{N^i} \int_{0}^{t}  
 \partial_\theta\left[ \left(e^{(t-s)L_{\psi_{n-1}}^*}h\right)^i\right](\varphi^i_j(T_{n-1}+s))\dd B^i_j(T_{n-1}+s)\, .
\end{equation}
Note that we drop here the dependence in $N$ and $ \delta$ for simplicity.
\subsubsection{Controlling the noise and a priori bound on the fluctuation process}
A key point in the analysis of \eqref{eq:mild_nu_n} is to show that one can control the behavior of the noise part $Z_{ n, t}$ in \eqref{eq:mild_nu_n} along the discretization introduced in the last paragraph. More precisely, for $\zeta$ chosen according to \eqref{ineq:choice zeta} and some positive constant $C_Z$ and defining the event
\begin{equation}
\label{eq:AN}
A^{ N}= A^{ N}(C_{ Z})\, :=\,  \left\lbrace \sup_{1\leq n\leq n_f}\sup_{0\leq t\leq T} \Vert Z_{n,t} \Vert_{-1, d} \leq C_Z\sqrt{\frac{T}{N}}N^\gz\right\rbrace\, ,
\end{equation}
the purpose of Section~\ref{sec:control_noise} is precisely to prove that $\bP(A^{ N})$ tends to $1$ as $N\to\infty$. With the knowledge of \eqref{eq:AN}, one can prove that the process $ \nu_{ n}$ remains a priori bounded: using that the sequence of the disorder $( \omega_{ i})_{ i\geq1}$ is admissible (recall Definition~\ref{def:disorder}), we prove in Proposition~\ref{prop:bound_nu}, Section~\ref{sec:dynamics_manifold}, that on the event $A^{ N}\cap B^N$, 
\begin{equation}
\label{eq:bound_nunt_intro}
\sup_{ 1\leq n\leq n_{ f}} \sup_{ t\in[0, T]} \left\Vert \nu_{ n, t} \right\Vert_{ -1, d}\,  =\,  O (N^{ -1/2 + 2 \zeta})\, ,
\end{equation}
as $N\to\infty$.
\subsubsection{Expansion of the dynamics on the manifold $M$}
The last step of the proof consists in looking at the rescaled dynamics of the phase of the projection of the empirical measure on $M$, that is the process
\begin{equation}
 \varPsi^N_t\, :=\,\psi_{n_t}\, ,
\end{equation}
where $(\psi_{ n})_{0\leq n\leq n_{ f}}$ is given by \eqref{eq:psi_n} and
\begin{equation}
 n_t\, :=\, \left\lfloor \frac{N^{1/2}}{T}(t-t_0^N)\right\rfloor \, .
\end{equation}
Namely, we prove in Propositions~\ref{prop:expansion_psi} and \ref{prop:muN_qbxiN} that, with high probability as $N\to\infty$, the following expansion holds:
 \begin{equation}\label{eq:def PsiNt}
 \varPsi^N_t\, =\,\psi_0+  b(\xi_N) t+O(N^{-1/4+2\zeta})\, ,
\end{equation}
where $b$ is the linear form of Theorem~\ref{th:main} and that $\mu_{N,N^{1/2}t}$ is close to $q_{\Psi^N_t}$ with high probability.
\subsection{Organization of the rest of the paper}
Section~\ref{sec:mild_formulation} is devoted to prove the mild formulation described in Paragraph~\ref{sec:mild_formulation_intro}. The control of the noise term in \eqref{eq:noise_Znt} is addressed in Section~\ref{sec:control_noise}. The dynamics on the manifold $M$ and the approach to the manifold are studied in Sections~\ref{sec:dynamics_manifold} and~\ref{sec:approach} respectively. The asymptotics of the drift as $ \delta\to 0$ is studied in Section~\ref{sec:asymptotic_drift}. We compile in the appendix several spectral estimates and expansions in small $\gd$ used throughout the paper.
\section{Proof of the mild formulation}
\label{sec:mild_formulation}
Define $\bL_{ 0, d}^{ 2}$ as the closure of the space of regular test functions $(u^{-d},\ldots,u^d)$ such that $ \int_{ \bbT} u^{ k}=0$ for all $k=-d, \ldots, d$ under the norm 
\begin{equation}
\label{eq:norm_L0_intro}
\left\Vert u \right\Vert_{ 0, d}\, :=\,   \left(\sum_{ k=-d}^{ d} \lambda^{ k}\int_{ \bbT}  u^{ k}( \theta)^{ 2} \dd \theta\right)^{ 1/2}\, ,
\end{equation}
and the space $H^\ga_d$ ($\ga\geq 0$) closure of the same set of test functions under the norm (denoting $\Vert \cdot\Vert_0$ the $L^2$-norm on $\bbT$)  
\begin{equation}
\left\Vert u \right\Vert_{ \ga, d}\, :=\,   \left\Vert (-\Delta_d)^{\ga/2} u\right\Vert_{0,d}=
\left(\sum_{ k=-d}^{ d} \lambda^{ k}\left\Vert (-\Delta)^{\ga/2} u^k\right\Vert_0^2 \right)^{ 1/2}\, ,
\end{equation}
where $\Delta_d$ denotes the Laplacian on $\bbT^{ 2d+1}$. We denote by $H^{-\ga}_d$ the dual space of $ H^{ \alpha}_{ d}$. We also write, for any bounded signed measure $m$ on $\bbT$, the usual distribution bracket as \[ \left\langle m\, ,\, f\right\rangle:= \int_{ \bbT} f(\theta) m(\dd \theta),\] and for any vector $(m^{ 1}, \ldots, m^{ d})$ of such measures \[ \left\langle m\, ,\, F\right\rangle_{ d}:= \sum_{ i=-d}^{ d} \lambda^{ i} \left\langle m^{ i}\, ,\, F^{ i}\right\rangle= \sum_{ i=-d}^{ d} \lambda^{ i} \int_{ \bbT} F^{ i}(\theta) m^{ i}(\dd \theta),\]the corresponding bracket weighted w.r.t. the disorder. Obviously, when the above measure coincide with an $L^{ 2}$ function, this expression coincides with the $L^{ 2}$ scalar product $ \left\langle \cdot\, ,\, \cdot\right\rangle_{ 2, d}$ associated to \eqref{eq:norm_L0_intro}. 
 
This section is devoted to prove Proposition~\ref{prop:SPDE_mild}. We begin first with a weak formulation of the SPDE \eqref{eq:SPDE_mild}.
\begin{proposition}
\label{prop:SPDE_weak}
For all $K>1$, for all $0\leq \delta\leq \delta(K)$, for any $ (t, \theta) \mapsto F_{ t}(\theta)= (F_{ t}^{ -d}(\theta), \ldots, F_{ t}^{ d}(\theta))\in \cC^{ 1, 2}([0, +\infty)\times \bbT, \bbR)$ such that $\int_{ \bbT} F_{ t}(\theta)\dd \theta=0$,
\begin{align}
\langle \nu_{ N, t}, F_t \rangle_{ d}\, &=\, \langle \nu_{ N, 0}, F_0\rangle_{ d} +\int_0^t \left\langle \nu_{ N, s}\, ,\, \partial_s F_s + L_{ \psi, \delta}^{ \ast} F_{ s}\right\rangle_{ d}\dd s +\int_0^t  \left\langle D_{ N}\, ,\, F_{ s}\right\rangle_{ d} \dd s \nonumber\\ &+ \int_{0}^{t} \left\langle R_{ N}( \nu_{ N, s})\, ,\, \partial_{ \theta} F_{ s}\right\rangle_{ d}\dd s + M_{ N, t}^{ F},\ N\geq1,\ t\geq0\, ,\label{eq:SPDE_weak}
\end{align}
where $D_{ N}$, $R_{ N}( \nu_{ N})$ are respectively defined in \eqref{eq:D} and \eqref{eq:R} and
\begin{equation}
\label{eq:ZNF}
M_{ N, t}^{ F}\, :=\, \sum_{i=-d}^d \frac{\lambda^i}{N^i}\sum_{j=1}^{N^i}\int_0^t \partial_\theta F^i_s(\varphi^i_j(s))\dd B^i_j(s).
\end{equation}
In \eqref{eq:SPDE_weak}, the operator $L_{ \psi, \delta}^{ \ast}$ is the dual in $\bL_{ 0, d}^{ 2}$ of the operator $L_{ \psi, \delta}$ in \eqref{eq:def_Lpsiq}:
\begin{multline}
\label{eq:L_delta_psi0_ast}
(L^*_{ \psi, \delta} v)^i \, =\, \frac12 \partial_{ \theta}^{ 2}v^i +\gd\go^i \partial_{ \theta}v^i+(\partial_{ \theta}v^i)\sum_{k=-d}^d \lambda^k J*q^k_{ \psi, \delta} - \int_{ \bbT}  \left((\partial_{ \theta} v^{ i}) \sum_{ k=-d}^{ d} \lambda^{ k}J\ast q_{ \psi, \delta}^{ k}\right)\dd \theta\\-\sum_{k=-d}^d \lambda^k J*(q^k_{ \psi, \delta} \partial_{ \theta}v^k)\, .
\end{multline}
\end{proposition}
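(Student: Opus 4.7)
The plan is to derive a semimartingale decomposition for $\langle \mu_{N,t},F_t\rangle_d$ via It\^o's formula, subtract a deterministic decomposition for $\langle q_{\psi,\delta},F_t\rangle_d$ coming from the stationarity of $q_{\psi,\delta}$, and then algebraically rearrange the nonlinear coupling difference into the three pieces $\langle \nu_{N,s},L^*_{\psi,\delta}F_s\rangle_d$, $\langle D_N,F_s\rangle_d$ and $\langle R_N(\nu_{N,s}),\partial_\theta F_s\rangle_d$ appearing in \eqref{eq:SPDE_weak}.

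First I would apply It\^o's formula to $F^i_s(\varphi^i_j(s))$ using SDE \eqref{eq:eds_Kur}. The key identity is
\begin{equation*}
-\tfrac{K}{N}\sum_{l=1}^{N^k}\sin(\varphi^i_j(s)-\varphi^k_l(s))\,=\,\lambda_N^k\,(J\ast \mu^k_{N,s})(\varphi^i_j(s)),
\end{equation*}
which follows from $J(\theta)=-K\sin\theta$ and the definition \eqref{eq:def_pNk} of $\lambda_N^k$. Averaging by $\tfrac{1}{N^i}\sum_j$ and weighting by $\lambda^i$ yields the decomposition of $\langle\mu_{N,t},F_t\rangle_d$ driven by $\partial_sF_s+\tfrac12\partial_\theta^2 F_s+(\partial_\theta F_s)(\delta\omega+\sum_k\lambda_N^k J\ast\mu^k_{N,s})$ plus the martingale $M^F_{N,t}$ in \eqref{eq:ZNF}. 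Separately, the stationarity of $q_{\psi,\delta}$ gives $0=\tfrac12\partial_\theta^2 q^i-\partial_\theta\bigl(q^i(\sum_k\lambda^k J\ast q^k+\delta\omega^i)\bigr)$; integrating this against $F^i_s$ by parts, weighting by $\lambda^i$ and integrating in time produce the analogous decomposition for $\langle q_{\psi,\delta},F_t\rangle_d$ with $(\mu_{N,s},\lambda_N)$ replaced by $(q_{\psi,\delta},\lambda)$. The added stationary contribution is identically zero but is kept in this form to match the $\mu_{N,t}$ equation upon subtraction.

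Subtracting the two, the $F$-only parts collapse to $\int_0^t\langle\nu_{N,s},\partial_sF_s+\tfrac12\partial_\theta^2 F_s+\delta\omega\partial_\theta F_s\rangle_d\,ds$, while the coupling difference equals $\int_0^t\Delta_s\,ds$ with
\begin{equation*}
\Delta_s\,=\,\sum_i\lambda^i\int_\bbT\partial_\theta F^i_s\Bigl[\mu^i_{N,s}\sum_k\lambda_N^kJ\ast\mu^k_{N,s}-q^i_{\psi,\delta}\sum_k\lambda^k J\ast q^k_{\psi,\delta}\Bigr]d\theta.
\end{equation*}
Expanding the bracket via $\mu_{N,s}=q_{\psi,\delta}+\nu_{N,s}$ and $\lambda_N^k=\lambda^k+(\lambda_N^k-\lambda^k)$ produces six terms that partition naturally into: (a) the pure disorder term $q^i_{\psi,\delta}\sum_k(\lambda_N^k-\lambda^k)J\ast q^k_{\psi,\delta}$, which after one integration by parts is $\langle D_N,F_s\rangle_d$ as in \eqref{eq:D}; (b) the two terms linear in $\nu$ with coefficient $\lambda^k$, namely $q^i\sum_k\lambda^k J\ast\nu^k_s+\nu^i_s\sum_k\lambda^k J\ast q^k$; (c) the three remaining terms (the two mixed linear-in-$\nu$ terms with coefficient $\lambda_N^k-\lambda^k$ and the quadratic term $\nu^i_s\sum_k\lambda_N^k J\ast\nu^k_s$) which reassemble exactly into $\langle R_N(\nu_{N,s}),\partial_\theta F_s\rangle_d$ as in \eqref{eq:R}.

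It remains to identify group (b) combined with the free part $\langle\nu_{N,s},\tfrac12\partial_\theta^2 F_s+\delta\omega\partial_\theta F_s\rangle_d$ as $\langle\nu_{N,s},L^*_{\psi,\delta}F_s\rangle_d$. This is verified directly from the expression \eqref{eq:L_delta_psi0_ast}, using the antisymmetry $\int_\bbT g(J\ast h)\,d\theta=-\int_\bbT h(J\ast g)\,d\theta$ (consequence of $J$ being odd) to transfer the convolution in the term $\sum_k\lambda^k J\ast(q^k\partial_\theta F^k)$ onto $\nu^i$ and then relabeling dummy indices $i\leftrightarrow k$. The constant mean-subtraction term in \eqref{eq:L_delta_psi0_ast} contributes zero because $\int_\bbT\nu^i_s\,d\theta=0$, since both $\mu^i_{N,s}$ and $q^i_{\psi,\delta}$ are probability densities. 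The main obstacle is really only the careful bookkeeping of these six terms and the correct handling of signs in the adjoint computation; no analytic difficulty arises because $q_{\psi,\delta}\in C^\infty$ and $F$ is assumed $C^{1,2}$, so all integrations by parts are justified.
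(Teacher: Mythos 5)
Your proposal is correct and follows essentially the same route as the paper's proof: apply It\^o's formula to $F^i_s(\varphi^i_j(s))$, sum and weight to get the semimartingale decomposition of $\langle\mu_{N,t},F_t\rangle_d$, use the stationarity of $q_{\psi,\delta}$ to subtract the deterministic part, and then expand the coupling difference via $\mu_{N,s}=q_{\psi,\delta}+\nu_{N,s}$ and $\lambda_N^k=\lambda^k+(\lambda_N^k-\lambda^k)$ into six terms that regroup as the $L^*_{\psi,\delta}$, $D_N$ and $R_N$ contributions, with the convolution antisymmetry handling the $J*(q^k\partial_\theta v^k)$ piece and the zero-mean condition $\int_\bbT\nu^i_s\,d\theta=0$ taking care of the constant subtraction in $L^*_{\psi,\delta}$. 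The bookkeeping and justifications match the paper's argument.
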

We refer to Appendix~\ref{sec:appendix_spectral_semigroups} (see in particular Propositions~\ref{prop:def_adjoint} and ~\ref{prop:L_ast_sectorial}) for a detailed analysis of the spectral properties of the operator $L_{ \psi, \delta}$ and its dual $L_{ \psi, \delta}^{ \ast}$. All we need to retain here is that when $ \delta$ is small, the operator $L_{ \psi, \delta}$ is sectorial in $H^{ -1}_{d}$ and generates a $\cC_{ 0}$-semi-group $ t \mapsto e^{ tL_{ \psi, \delta}}$ in this space. Moreover, on the space $\bL_{ 0, d}^{ 2}$, one has that $(e^{ tL_{ \psi, \delta}})^{ \ast}= e^{ tL_{ \psi, \delta}^{ \ast}}$. Since the phase $ \psi$ is not relevant in this paragraph, we write for simplicity $ q_{ \delta}$,  $L_{ \delta}$ instead of $q_{ \psi, \delta}$ and $L_{ \psi, \delta}$.
\begin{proof}[Proof of Proposition~\ref{prop:SPDE_weak}]
Note that, using the definition of $J(\cdot)$ in \eqref{eq:def_J} and of the empirical measure $\mu_{ N, t}$ in \eqref{eq:def_mu_finite}, the system \eqref{eq:eds_Kur} may be rewritten as
\begin{equation}
 \dd \varphi^i_j(t)\, =\, \gd\go^i\dd t +\sum_{k=-d}^d \lambda_{ N}^k J*\mu^k_t\big(\varphi^i_j(t)\big)\dd t+\dd B^i_j(t),\ i=-d, \ldots, d\, .
\end{equation}
Consider $(t, \theta) \mapsto F_{ t}(\theta)=(F^{ i}_{ t}( \theta))_{i=1,\ldots,d}\in C^{1,2}([0, +\infty)\times \bbT,\bbR)^{ d}$ such that for all $t\geq0$, $\int_{ \bbT} F_{ t}(\theta)\dd \theta=0$. An application of It\^o Formula to \eqref{eq:eds_Kur} gives, for $i=1, \ldots, d$, $j= 1, \ldots, N^{ i}$, $ t\geq0$,
\begin{align*}
F^i_t(\varphi^i_j(t))\, &=\, F^i_0(\varphi^i_j(0))+\int_0^t \partial_s F^i_s(\varphi^i_j(s))\dd s +\frac12 \int_0^t \partial^2_\theta F^i_s(\varphi^i_j(s))\dd s \\&+\int_0^t \partial_\theta F^i_s(\varphi^i_j(s))\left(\gd \go^i+\sum_{k=-d}^d \lambda_{ N}^k J*\mu_{ N, s}^{ k}(\varphi^i_j(s))\right)\dd s +\int_0^t \partial_\theta F^i_s(\varphi^i_j(s))\dd B^i_j(s)\, .
\end{align*}
After summation over $j=1,\ldots,N^i$, we obtain, for $i=1, \ldots, d$,
\begin{multline}
\label{eq:mui_F}
 \langle \mu^i_{ N, t}, F^i_t\rangle\, =\, \langle \mu^i_{ N, 0}, F^i_0\rangle +\int_0^t \left\langle \mu^i_{ N, s},\partial_s F^i_s + \frac12 \partial^2_\theta F^i_s +\partial_\theta F^i_s\left(\gd \go^i +\sum_{k=-d}^d \lambda_{ N}^k(J*\mu^k_{ N, s})\right)\right\rangle\dd s\\
+\frac{1}{N^i}\sum_{j=1}^{N^i}\int_0^t \partial_\theta F^i_s(\varphi^i_j(s))\dd B^i_j(s)\, . 
\end{multline}
Replacing $\mu_{ N, t}^i$ by $\nu_{ N, t}^{ i} + q^i_{ \delta}$ in \eqref{eq:mui_F} (recall \eqref{eq:nut}), we obtain
\begin{multline}\label{eq:expand_mui}
\langle \nu_{ N, t}^i, F^i_t\rangle + \langle q^i_{ \delta},F^i_t\rangle\, =\, \langle \nu_{ N, 0}^i, F^i_0\rangle + \langle q^i_{ \delta},F^i_0\rangle\\
+\int_0^t \left\langle \nu_{ N, s}^i + q^i_\delta,\partial_s F^i_s + \frac12 \partial^2_\theta F^i_s +\partial_\theta F^i_s\left(\gd \go^i +\sum_{k=-d}^d  \lambda_{ N}^kJ*(\nu_{ N, s}^k + q^k_\psi)\right)\right\rangle\dd s\\ 
+\frac{1}{N^i}\sum_{j=1}^{N^i}\int_0^t \partial_\theta F^i_s(\varphi^i_j(s))\dd B^i_j(s)\\
=\, \langle \nu_{ N, 0}^i, F^i_0\rangle +\int_0^t \left\langle \nu_{ N, s}^i, \partial_s F^i_s + \frac12 \partial^2_\theta F^i_s +\partial_\theta F^i_s\left(\gd \go^i +\sum_{k=-d}^d \lambda_{ N}^kJ*q^k_\delta\right)\right\rangle\dd s\\
+\int_0^t \left\langle \nu_{ N, s}^i, \partial_\theta F^i_s\sum_{k=-d}^d \lambda_{ N}^k(J*\nu_{ N, s}^k)\right\rangle\dd s +\int_0^t \left\langle q^i_\delta, \partial_\theta F^i_s\sum_{k=-d}^d \lambda_{ N}^k(J*\nu_{ N, s}^k)\right\rangle\dd s\\ + \langle q^i_\delta,F^i_0\rangle +\int_0^t \left\langle q^i_\delta, \partial_s F^i_s + \frac12 \partial^2_\theta F^i_s +\partial_\theta F^i_s\left(\gd \go^i +\sum_{k=-d}^d \lambda_{ N}^kJ*q^k_\delta\right)\right\rangle\dd s\\
+\frac{1}{N^i}\sum_{j=1}^{N^i}\int_0^t \partial_\theta F^i_s(\varphi^i_j(s))\dd B^i_j(s)\, .
\end{multline}
Since by definition $q_{\delta}$ is a stationary solution to \eqref{eq:FP_Kur_finite}, one easily sees that
\begin{equation}\label{eq:mild_qi}
 \langle q^i_\delta,F^i_t\rangle\, =\,  \langle q^i_\delta,F^i_0\rangle +\int_0^t \langle q^i_\delta, \partial_{ s}F^i_s\rangle \dd s,\quad i=1, \ldots, d,\ t\geq0\, ,
\end{equation}
and
\begin{align}
 0\, &=\, \left\langle \frac12 \partial_{ \theta}^{ 2}q^i_\delta-\gd \go^i \partial_{ \theta}q^i_\delta-\partial_{ \theta}\left(q^i_{ \delta} \sum_{k=-d}^d \lambda^k J*q^k_\delta \right), F^i_s\right\rangle \nonumber\\
 &=\, \left\langle q^i_\delta,\frac12 \partial^2_\theta F^i_s+\gd \go^i \partial_\theta F^i_s+\partial_\theta F^i_s \sum_{k=-d}^d \lambda^k J*q^k_\delta \right\rangle \, .\label{eq:using_qi_stat}
\end{align}
Summing \eqref{eq:expand_mui} over $i=-d,\ldots,d$ and using \eqref{eq:mild_qi} and \eqref{eq:using_qi_stat}, we obtain
\begin{multline}\label{eq:first_attempt_mild}
 \langle \nu_{ N, t}, F_t \rangle_{ d}\, =\, \langle \nu_{ N, 0}, F_0\rangle_{ d} +\int_0^t \left\langle \nu_{ N, s}, \partial_s F_s + \frac12 \partial^2_\theta F_s +\gd \partial_\theta F_s\otimes w+\partial_\theta F_s\sum_{k=-d}^d \lambda_{ N}^kJ*q^k_\delta\right\rangle_{ d}\dd s\\
+\int_0^t \left\langle \nu_{ N, s}, \partial_\theta F_s\sum_{k=-d}^d \lambda_{ N}^kJ*\nu_{ N, s}^k\right\rangle_{ d}\dd s +\int_0^t \left\langle q_\delta, \partial_\theta F_s\sum_{k=-d}^d \lambda_{ N}^kJ*\nu_{ N, s}^k\right\rangle_{ d}\dd s\\
 +\int_0^t \left \langle q_\delta \sum_{k=-d}^d ( \lambda_{ N}^k-\lambda^k)J*q^k_\delta, \partial_\theta F_s \right\rangle_{ d}\dd s+M_{ N, t}^{ F}\, ,
\end{multline}
where $M_{ N, t}^{ F}$ is defined in \eqref{eq:ZNF} and where we have used the notation $F\otimes \go =(F^i \go^i)_{i_1,\ldots,d}$. Note that
\begin{align}
\left\langle q_\delta, \partial_\theta F_s\sum_{k=1}^d \lambda^k J*\nu_{ N, s}^k \right\rangle_{ d} & =\, \sum_{i=1}^d\sum_{k=-d}^d \lambda^i \lambda^k \langle q^i_\delta, \partial_\theta F^i_s J*\nu_{ N, s}^k\rangle \nonumber\\
&=\, -\sum_{i=-d}^d\sum_{k=-d}^d \lambda^i \lambda^k \langle \nu_{ N, s}^i, J*(q^i_\delta  \partial_\theta F^i_s) \rangle \nonumber\\
&=\, -\left\langle \nu_s,\sum_{ k=-d}^{ d} \lambda^k J*(q_{ \delta}^k\partial_\theta F^k_s)\right\rangle_{ d}\, .\label{aux:SPDE_weak1}
\end{align}
The result of Proposition~\ref{prop:SPDE_weak} is a simple reformulation of \eqref{eq:first_attempt_mild} using \eqref{aux:SPDE_weak1} and the definition of $L_{ \delta}^{ \ast}$ in \eqref{eq:L_delta_psi0_ast}.
\end{proof}
We are now in position to prove Proposition~\ref{prop:SPDE_mild}:
\begin{proof}[Proof of Proposition~\ref{prop:SPDE_mild}]
Let us apply the identity \eqref{eq:SPDE_weak} of Proposition~\ref{prop:SPDE_weak} in the case of test functions $F_{ s}$ of the form
\[F_{ s}= e^{ (t-s)L_{ \delta}^{ \ast}}h,\] for any test functions $h$ of class $\cC^{ 2}$ on $\bbT$. Then $ \partial_{ s} F_{ s}= - L_{ \delta}^{ \ast} F_{ s}$ and one obtains
\begin{multline}
\langle \nu_{ N, t}, h\rangle_{ d}\, =\, \langle \nu_{ N, 0}, e^{ tL_{ \delta}^{ \ast}}h\rangle_{ d} +\int_0^t  \left\langle D_{ N}\, ,\, e^{ (t-s)L_{ \delta}^{ \ast}}h\right\rangle_{ d} \dd s + \int_{0}^{t} \left\langle R_{ N}( \nu_{ N, s})\, ,\, \partial_{ \theta} e^{ (t-s)L_{ \delta}^{ \ast}}h \right\rangle_{ d}\dd s\\+M_{ N, t}^{ F}\, .\label{eq:SPDE_weak_F}
\end{multline}
We aim at proving that one can write a mild version of this weak equation and that this mild formulation makes sense in $H^{ -1}_{ d}$. Consider a sequence $(v_{ l})_{ l\geq1}$ of elements of $\bL_{ 0, d}^{ 2}$ converging as $l\to \infty$ in $H^{ -1}_{ d}$ to $ \nu_{ N, 0}\in H^{ -1}_{ d}$. Then, for $h$ of class $\cC^{ 2}$,
\begin{equation}
\left\langle v_{ l}\, ,\, e^{ tL_{ \delta}^{ \ast}} h\right\rangle_{ d} \, =\,  \left\langle v_{ l}\, ,\, e^{ tL_{ \delta}^{ \ast}} h\right\rangle_{ 2, d} \, =\,  \left\langle  e^{ tL_{ \delta}} v_{ l}\, ,\, h\right\rangle_{ 2, d}\, =\,  \left\langle e^{ tL_{ \delta}} v_{ l}\, ,\, h\right\rangle_{ d}.
\end{equation}
By continuity of $e^{ tL_{ \delta}}$ on $H^{ -1}_{ d}$, $e^{ tL_{ \delta}} v_{ l}$ converges in $H^{ -1}_{ d}$ to $e^{ tL_{ \delta}} \nu_{ N, 0}$, as $l\to\infty$. In particular, for all $h\in H^{ 1}_{ d}$,
\begin{equation}
 \left\vert \left\langle e^{ tL_{ \delta}} \nu_{ N, 0}\, ,\, h\right\rangle_{ d} - \left\langle e^{ tL_{ \delta}} v_{ l}\, ,\, h\right\rangle_{ d}\right\vert \, \leq\,  \left\Vert h \right\Vert_{ 1, d} \left\Vert e^{ tL_{ \delta}} \nu_{ N, 0}- e^{ tL_{ \delta}} v_{ l} \right\Vert_{ -1, d}\to_{ l\to\infty} 0,
\end{equation}
so that, at the limit for $l\to\infty$, for all $t\geq0$,
\begin{equation}
\left\langle \nu_{ N, 0}\, ,\, e^{ tL_{ \delta}^{ \ast}} h\right\rangle_{ d}\, =\,  \left\langle e^{ tL_{ \delta}} \nu_{ N, 0}\, ,\, h\right\rangle_{ d}.
\end{equation}
Since the function $D_{ N}$ defined in \eqref{eq:D} is regular, it is straightforward to prove in the same way that
\begin{equation}
\left\langle D_{ N}\, ,\, e^{ (t-s)L_{ \delta}^{ \ast}} h\right\rangle_{ d}\, =\,  \left\langle e^{ (t-s) L_{ \delta}} D_{ N}\, ,\, h\right\rangle_{ d}.
\end{equation}
The continuity of the mapping $t \mapsto e^{ tL_{ \delta}} \nu_{ N, 0}$ and $t \mapsto \int_{0}^{t} e^{ (t-s)L_{ \delta}}D_{ N} \dd s$ in $H_{ d}^{ -1}$ is immediate from the continuity of the semigroup in $H_{ d}^{ -1}$.

We now focus on the term $R_{ N}( \nu_{ N, s})$ defined in \eqref{eq:R}: consider $(w_{s, l})_{ l\geq1}$ a sequence of elements of $\bL_{ 0, d}^{ 2}$ converging in $H^{ -1}_{ d}$ to $ \nu_{ N, s}$ (consider for example $w_{s, l}= \phi_{ l}\ast \nu_{ N, s}$ for a regular approximation of identity $( \phi_{ l})_{ l\geq1}$) and define
\begin{multline}
R_{ s, l}\, :=\,  \left( \sum_{k=-d}^d (\lambda_{ N}^k-\lambda^k)J*q^k_{ \delta}\right) w_{ s, l} + q_{ \delta}\sum_{k=-d}^d (\lambda_{ N}^k-\lambda^k)(J*\nu_{ N, s}^k) \\ + \left(\sum_{k=-d}^d \lambda_{ N}^kJ*\nu_{ N, s}^k\right) w_{ s, l}\, .
\end{multline}
For any $l\geq1$, the following identity holds:
\begin{align}
\label{aux:Rsp}
\left\langle R_{s, l}\, ,\, \partial_{ \theta} e^{ (t-s)L_{ \delta}^{ \ast}}h\right\rangle_{ d} &= \left\langle R_{ s, l}\, ,\, \partial_{ \theta} e^{ (t-s)L_{ \delta}^{ \ast}} h\right\rangle_{ 2, d} = - \left\langle e^{ (t-s) L_{ \delta}}\partial_{ \theta} R_{ s, l}\, ,\, h\right\rangle_{ 2, d}.
\end{align}
Since $h$ is regular and $R_{ s, l}$ converges in $H^{ -1}_{ d}$ to $R_{ N}( \nu_{ N, s})$, the lefthand part of the previous identity converges as $l\to\infty$ to $ \left\langle R_{ N}(\nu_{ N, s})\, ,\, \partial_{ \theta} e^{ (t-s)L_{ \delta}^{ \ast}} h\right\rangle_{ d}$. Moreover, for all $h$ regular, using the estimate \eqref{eq:reg_semigroup_m1} on the regularity of the semigroup $ e^{ tL_{ \delta}}$, (note in particular that $e^{ tL_{ \delta}}$ can be extended to a continuous operator to $H^{ -2}_{ d}$ to $H^{ -1}_{ d}$, see Proposition~\ref{prop:reg_semigroup} below),
\begin{align*}
\left\vert \left\langle e^{ (t-s)L_{ \delta}}\partial_{ \theta}(R_{ s, l} - R_{ N}(\nu_{ N, s}))\, ,\, h\right\rangle_{ d}\right\vert &\leq\,  \left\Vert h \right\Vert_{ 1, d} \left\Vert e^{ (t-s)L_{ \delta}}\partial_{ \theta}(R_{ s, l} - R_{ N}(\nu_{ N, s}))\right\Vert_{ -1, d},\\
&\leq \, C\left\Vert h \right\Vert_{ 1, d} \left(1+ \frac{ 1}{ \sqrt{t-s}}\right) \left\Vert \partial_{ \theta}(R_{ s, l} - R_{ N}(\nu_{ N, s})) \right\Vert_{ -2, d},\\
&\leq \, C\left\Vert h \right\Vert_{ 1, d} \left(1+ \frac{ 1}{ \sqrt{t-s}}\right) \left\Vert R_{ s, l} - R_{ N}(\nu_{ N, s}) \right\Vert_{ -1, d}.
\end{align*}
Since the last estimate is true for all $h$ regular, one obtains that
\begin{equation}
 \left\Vert e^{ (t-s)L_{ \delta}}\partial_{ \theta}(R_{ s, l} - R_{ N}(\nu_{ N, s})) \right\Vert_{ -1, d}\,  \leq\,  C \left(1 + \frac{ 1}{ \sqrt{t-s}}\right) \left\Vert R_{ s, l} - R_{ N}( \nu_{ N, s}) \right\Vert_{ -1, d}.
\end{equation}
Since $R_{ s, l}$ converges to $R_{ N}(\nu_{ N, s})$ in $H^{ -1}_{ d}$, one obtains that one can make $l\to\infty$ in \eqref{aux:Rsp}:
\[\left\langle R_{ N}(\nu_{ N, s})\, ,\, \partial_{ \theta} e^{ (t-s)L_{ \delta}^{ \ast}}h\right\rangle_{ d} = - \left\langle e^{ (t-s)L_{ \delta}}\partial_{ \theta} R_{ N}( \nu_{ N, s})\, ,\, h\right\rangle_{ d}.\]
The same argument as before shows also that
\begin{multline}\label{aux:TRnus}
 \left\Vert e^{ (t-s)L_{ \delta}}\partial_{ \theta}R_{ N}(\nu_{ N, s}) \right\Vert_{ -1, d} \leq C \left(1 + \frac{ 1}{ \sqrt{t-s}}\right) \left\Vert R_{ N}( \nu_{ N, s}) \right\Vert_{ -1, d}\\ \, \leq\,  C \left(1 + \frac{ 1}{ \sqrt{t-s}}\right) \left\Vert \nu_{ N, s}\right\Vert_{ -1, d}\leq C 
 \pi \sqrt{\frac 23 \sum_{k=-d}^d (\gl^k)^{-1}}\left(1 + \frac{ 1}{ \sqrt{t-s}}\right),
\end{multline}
where we used \eqref{eq:proba in Hminus1}. The inequality \eqref{aux:TRnus} implies that the integral $ \int_{0}^{t} \left\Vert e^{ (t-s)L_{ \delta}}\partial_{ \theta}R_{ N}(\nu_{ N, s}) \right\Vert_{ -1, d}\dd s$ is almost surely finite. Using \cite{MR617913}, Theorem~1, p.~133, we deduce that $ \int_{0}^{t} e^{ (t-s)L_{ \delta}}\partial_{ \theta}R_{ N}(\nu_{ N, s}) \dd s$ makes sense as a Bochner integral in $H^{ -1}_{ d}$. The continuity of $ t \mapsto \int_{0}^{t} e^{ (t-s)L_{ \delta}}\partial_{ \theta}R_{ N}(\nu_{ N, s}) \dd s$ in $H^{ -1}_{ d}$ is a direct consequence of the bounds found in Proposition~\ref{prop:reg_semigroup}.

It remains to treat the noise term in \eqref{eq:SPDE_weak_F}. The precise control of this term is made in Section~\ref{sec:control_noise} below (see in particular Proposition~\ref{prop:estim_noise}). We prove actually more in Section~\ref{sec:control_noise} since we have to take into account the dependence in $N$, which is not important for this proof. Let us admit for the moment that the proof of Proposition~\ref{prop:estim_noise} is valid. In particular, one deduces from \eqref{eq:Znttss} and an application of the Kolmogorov Lemma that the almost-sure limit when $t^{ \prime} \nearrow t$ of $Z_{N, t, t^{ \prime}}$ defined in \eqref{eq:SPDE_Ztth} exists in $H_{ d}^{ -1}$. The continuity of the limiting process $t \mapsto Z_{ N, t}$ in $H_{ d}^{ -1}$ comes from \eqref{eq:Znts} and the rest of the proof of Proposition~\ref{prop:estim_noise}. It is immediate to see from \eqref{eq:ZNF} that for all $F$ regular, $ \left\langle Z_{ N, t}\, ,\, F\right\rangle_{ d}= M_{ N, t}^{ F}$, where we see the term $Z_{n,t}$ 
as a vector $(Z_{n,t}^{-d}, \ldots,Z_{n,t}^d)$, with $Z_{n,t}^k(h)=\frac{1}{\gl^k}Z_{n,t}(\hat h_k)$ and $\hat h_k=(0,\ldots,0, h^k,0,\ldots,0)$. One concludes from everything 
that we have done that, for all $h$ regular that
\begin{multline}
\left\langle \nu_{ N, t}\, ,\, h\right\rangle_{ d}\, =\,  \left\langle e^{ tL_{ \delta}} \nu_{ N, 0}\, ,\, h\right\rangle_d + \left\langle \int_{0}^{t}  \left(e^{ (t-s)L_{ \delta}} D_{ N} - e^{ (t-s)L_{ \delta}}\partial_{ \theta}R_{ N}(\nu_{ N, s})\right)\dd s\, ,\, h\right\rangle_{ d} \\+ \left\langle Z_{ N, t}\, ,\, h\right\rangle_{ d},
\end{multline}
where everything above makes sense as element of $H^{ -1}_{ d}$. Since this is true for all $h$ regular, the identity \eqref{eq:SPDE_mild} follows. Proposition~\ref{prop:SPDE_mild} is proved.
\end{proof}

\section{Controlling the noise}
\label{sec:control_noise}
This section is devoted to control the noise term $Z_{ n, t}$ defined in \eqref{eq:noise_Znt}. More precisely, we prove the following proposition
(recall the definition of $A^N=A^{ N}(C_{ Z})$ given in \eqref{eq:AN}). 
\begin{proposition}
\label{prop:estim_noise} For all $\zeta>0$, there exists a constant $C_Z$ such that $\bP(A^N)\rightarrow 1$, as $N\to\infty$.
\end{proposition}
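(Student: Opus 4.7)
The plan is in two stages: first obtain a second-moment bound $\bbE\|Z_{n,t}\|_{-1,d}^2 \leq C t/N$ for fixed $n$ and $t$ via It\^o's isometry and the smoothing estimate \eqref{ineq:L 2}; then upgrade this to a high-probability bound uniform in $(n,t)\in\{1,\ldots,n_f\}\times[0,T]$ by combining a time-continuity estimate in $H_d^{-1}$ with a union bound over the $n_f = O(\sqrt N/T)$ values of $n$. Since the target fluctuation scale $\sqrt{T/N}\,N^\zeta$ exceeds the $L^2$ scale $\sqrt{T/N}$ by a factor $N^\zeta$, there is ample room to absorb polynomial losses provided one has moments of arbitrary order.

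For the fixed-$(n,t)$ estimate, I would decompose the norm on the orthonormal basis $(e_\alpha)$ of $H_d^1$ described in \eqref{eq:proba in Hminus1}, writing $\|Z_{n,t}\|_{-1,d}^2 = \sum_\alpha Z_{n,t}(e_\alpha)^2$. Conditioning on $\mathcal{F}_{T_{n-1}}$ freezes the random phase $\psi_{n-1}$, and each $Z_{n,t}(e_\alpha)$ is then a stochastic integral against $(B_j^i)$ with adapted integrand. It\^o's isometry reduces matters to a time integral of pointwise squared values of $\partial_\theta(e^{(t-s)L_{\psi_{n-1}}^*}e_\alpha)^i$ at the positions $\varphi_j^i$. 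Bounding these pointwise values by Sobolev norms, using the regularity of the adjoint semigroup (the dual version of \eqref{ineq:L 2}), and summing over $\alpha$ by Parseval, one reaches a bound with an integrable singularity at $s=t$ and the prefactor $1/N$ coming from the $(\lambda^i)^2/N^i$. The constants are uniform in $\psi_{n-1}$ by rotation invariance.

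For the supremum over $t\in[0,T]$, I would apply the same It\^o-isometry machinery to the increments $Z_{n,t}-Z_{n,t'}$, using that, conditionally on the paths $(\varphi_j^i)$, the process $Z_{n,t}$ is Gaussian. This produces, for every $p\geq1$, an estimate of the form $\bbE\|Z_{n,t}-Z_{n,t'}\|_{-1,d}^{2p}\leq C_p |t-t'|^p/N^p$, after which Kolmogorov's continuity criterion in the Hilbert space $H_d^{-1}$ controls $\sup_{t\in[0,T]}\|Z_{n,t}\|_{-1,d}$ in every $L^{2p}$. Markov's inequality with $p$ chosen large enough (depending on $\zeta$), followed by a union bound over the $O(\sqrt N/T)$ values of $n$, gives $\bbP((A^N)^c)\to 0$ once $C_Z$ is large enough.

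The main technical obstacle is that $t\mapsto Z_{n,t}$ is not itself a martingale in $H_d^{-1}$, because the propagator $e^{(t-s)L_{\psi_{n-1}}^*}$ depends on $t$. To bound the increment $Z_{n,t}-Z_{n,t'}$ one must split it into a ``fresh'' noise contribution from the interval $[t',t]$ and a contribution of the form $(e^{(t-t')L_{\psi_{n-1}}^*}-I)$ acting on an object with the same structure as $Z_{n,t'}$. The first is handled directly by It\^o's isometry; the second requires a $H_d^{-1}$ modulus of continuity for the semigroup, together with a careful use of the singularity at $s=t'$ coming from \eqref{ineq:L 2}. The tolerance $\zeta<1/8$ chosen in \eqref{ineq:choice zeta} is far more than enough to absorb the resulting polynomial prefactors from the union bound and the Sobolev losses.
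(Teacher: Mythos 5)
Your overall plan is the same as the paper's: obtain moment bounds of all orders for the increments $\|Z_{n,t}-Z_{n,s}\|_{-1,d}$ by decomposing them into a ``fresh noise'' part on $[s,t]$ and a part of the form $(e^{(t-s)L}-\mathrm{Id})$ applied to the earlier stochastic integral, then use a continuity criterion to control $\sup_{t\in[0,T]}$, then Markov plus a union bound over the $n_f=O(N^{1/2})$ blocks. This is exactly the paper's $M^1+M^2$ decomposition in Lemma~\ref{lem:moments Z} followed by the Garsia--Rademich--Rumsey lemma; Kolmogorov's continuity criterion, which you propose, is interchangeable with GRR here.

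Two points need correcting, neither of which changes the structure of the argument. First, the claim that ``conditionally on the paths $(\varphi_j^i)$, the process $Z_{n,t}$ is Gaussian'' is not a usable statement: the trajectories $\varphi^i_j$ are adapted functionals of the very same Brownian motions $B^i_j$ that drive $Z_{n,t}$, so conditioning on the paths determines the noise and removes all randomness. The integrand $\partial_\theta\big[(e^{(t-s)L^*_{\psi_{n-1}}}h)^i\big](\varphi_j^i(\cdot))$ is merely predictable, not deterministic; the correct tool for moments of all orders is the Burkholder--Davis--Gundy inequality applied to the two martingale pieces, which is what the paper does, together with H\"older when passing to the $m$-th power of a sum over the orthonormal basis. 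Second, the claimed increment estimate $\bbE\|Z_{n,t}-Z_{n,t'}\|_{-1,d}^{2p}\leq C_p|t-t'|^p/N^p$ is too strong: the regularizing estimates for $e^{tL^*_\gd}$ (Proposition~\ref{prop:reg_semigroup_adjoint}) carry a singular prefactor $(t-u)^{-1/4-\gep/2}$ in the norm needed to control pointwise evaluation at $\varphi^i_j(u)$, and after integrating the squared singularity one only gets $|t-t'|^{m(1/2-2\gep)}/N^m$ for the $2m$-th moment, as in \eqref{eq:Znts}. This lower H\"older exponent is still positive, so the Kolmogorov/GRR step and the final union bound go through unchanged, but you should not claim the $|t-t'|^p$ rate.
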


To prove Proposition~\ref{prop:estim_noise}, we rely on the two following lemmas:
\begin{lemma}[Garsia-Rademich-Rumsey]
\label{th:GRR}
Let $ \chi$ and $\Psi$ be continuous, strictly increasing functions on $(0,\infty)$ such that $ \chi(0)=\Psi(0)=0$ and $\lim_{t\nearrow\infty} \Psi(t)=\infty$.
Given $T>0$ and $\phi$ continuous on $(0,T)$ and taking its values in a Banach space $(E,\Vert.\Vert)$, if
\begin{equation}
 \int_0^T\int_0^T \Psi\left(\frac{\Vert \phi(t)-\phi(s)\Vert}{ \chi(|t-s|)}\right)\dd s\dd t \, \leq \, B\, < \, \infty\, ,
\end{equation}
then for $0\leq s\leq t\leq T$,
\begin{equation}
 \Vert\phi(t)-\phi(s)\Vert\, \leq \, 8\int_0^{t-s} \Psi^{-1}\left(\frac{4B}{u^2}\right) \chi(\dd u)\, .
\end{equation}
\end{lemma}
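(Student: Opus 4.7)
The plan is to prove this classical inequality by the standard Garsia--Rodemich--Rumsey argument, which combines Fubini's theorem, a Chebyshev--Markov averaging step, and a dyadic telescoping construction. First, I would set $I(u) := \int_0^T \Psi\bigl(\|\phi(u)-\phi(v)\|/ \chi(|u-v|)\bigr) \dd v$; by Fubini applied to the hypothesis, $\int_0^T I(u) \dd u \leq B$, so $I$ is small on average. This averaged bound is the only place the double-integral assumption is used.

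Fix $0 \leq s < t \leq T$ and write $d_0 := t-s$. I would construct recursively a sequence $t_0 = t, t_1, t_2, \ldots \in [s,t]$ converging to $s$, with consecutive points at distance bounded by a geometrically shrinking scale $d_n$, for instance defined by $\chi(d_{n+1}) = \chi(d_n)/2$ (admissible because $\chi$ is continuous, strictly increasing and vanishes at $0$). Given $t_n$ together with an inductive bound on $I(t_n)$, I would pick a sub-interval $J_n \subset [s,t_n]$ of length comparable to $d_n$ and apply Markov's inequality twice on $J_n$: the set where $I(v) > 4B/|J_n|$ has measure at most $|J_n|/2$, and symmetrically the set where $\Psi\bigl(\|\phi(t_n)-\phi(v)\|/\chi(d_n)\bigr) > 2 I(t_n)/|J_n|$ also has measure less than $|J_n|/2$. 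Choosing $t_{n+1}\in J_n$ outside both bad sets and inverting $\Psi$ then yields the key pointwise estimate $\|\phi(t_n)-\phi(t_{n+1})\| \leq \chi(d_n)\, \Psi^{-1}\!\bigl(4B/d_n^2\bigr)$, while propagating a usable bound on $I(t_{n+1})$ to the next step.

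Telescoping and using continuity of $\phi$ gives $\|\phi(t)-\phi(s)\| \leq \sum_{n \geq 0} \chi(d_n)\, \Psi^{-1}\!\bigl(4B/d_n^2\bigr)$, and the geometric decay of $\chi(d_n)$ allows one to compare this Riemann-type sum with the Stieltjes integral $\int_0^{t-s} \Psi^{-1}\!\bigl(4B/u^2\bigr) \chi(\dd u)$, producing the advertised multiplicative constant $8$. The main obstacle lies entirely in the bookkeeping of constants: choosing the scales $d_n$ and the sub-intervals $J_n$ so that both Markov-type inequalities hold simultaneously with enough slack, propagating the inductive bound on $I(t_n)$ without losses that would spoil the final constant, and tracking factors so that the resulting series is majorized by the Stieltjes integral with exactly the constants $8$ and $4$. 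Once the dyadic construction is set up, the argument is a textbook chaining estimate; its strength is that it converts an averaged $L^1$-type assumption into a pointwise modulus of continuity, which in the context of the present paper will be essential to upgrade the $L^2$-estimates on $Z_{n,t}$ into the uniform control required by the event $A^N$.
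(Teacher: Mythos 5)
The paper offers no proof of this lemma: it simply cites Stroock and Varadhan, Theorem~2.1.3, where the classical Garsia--Rodemich--Rumsey chaining argument is carried out. Your outline targets exactly that argument (Fubini averaging of $I(\cdot)$, a Markov-type selection inside geometrically shrinking windows, dyadic chaining, telescoping, and comparison with the Stieltjes integral), but there is a structural gap that goes beyond the constant bookkeeping you defer. You anchor the chain at $t_0=t$ and descend to $s$; yet the very first link $\|\phi(t_0)-\phi(t_1)\|$ already requires a bound on $I(t_0)$, and the averaged hypothesis $\int_0^T I(v)\,\dd v\leq B$ gives no control whatsoever on $I$ at a prescribed point such as $t$ (indeed $I(t)$ may be infinite). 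The classical proof instead first selects a good \emph{interior} anchor $t_0\in(s,t)$ with, say, $I(t_0)\leq B/(t-s)$, which the averaging does provide; it then builds one chain from $t_0$ down to $s$, yielding $\|\phi(t_0)-\phi(s)\|\leq 4\int_0^{t-s}\Psi^{-1}(4B/u^2)\chi(\dd u)$, and handles $\|\phi(t)-\phi(t_0)\|$ by running the identical argument after the reflection $v\mapsto t+s-v$, which leaves both the double integral and $I(t_0)$ unchanged. The two contributions add to the stated constant $8=4+4$. A single chain starting from $t$ would therefore both lack the required initialization and, even if it ran, only account for a factor $4$, so the plan as written cannot recover the advertised inequality. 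With the interior anchor and the chain-plus-reflection structure in place, the remainder of your scheme (halving $\chi(d_n)$ at each stage, the two simultaneous Markov inequalities with slack, and majorizing the telescoping series by the Stieltjes integral) is precisely the textbook proof and closes.
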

Proof of Lemma~\ref{th:GRR} may be found in \cite{StrookVaradhan1979}, Theorem~2.1.3. The second result estimates the moments of the process $Z_{ n, t}$:
\begin{lemma}\label{lem:moments Z}
For all $\gep>0$ and all integer $m>0$, there exists a positive constant $C_{m,\gep}$ such that for all $0\leq s< t\leq T$, 
\begin{equation}
\label{eq:Znts}
\bE \Vert Z_{n,t}-Z_{n,s}\Vert^{2m}_{-1,d}\, \leq\,  \frac{C_{m,\gep}}{N^m}\left((t-s)^{m(1/2-2\gep)}+(t-s)^m\right)\, . 
\end{equation}
\end{lemma}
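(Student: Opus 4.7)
The proof rests on a Burkholder-Davis-Gundy moment bound for the stochastic integral $Z_{n,t}$ combined with the analytic smoothing properties of the semigroup $e^{tL_{\psi_{n-1}}}$ from Proposition~\ref{prop:reg_semigroup}. The plan is to recast $Z_{n,t}$ as an $H^{-1}_d$-valued stochastic convolution via integration by parts and then apply Hilbert-space BDG directly; a scalar BDG per test function $h$ followed by summation over an orthonormal basis of $H^1_d$ (e.g.\ the family $(a_{n,k},b_{n,k})$ used before \eqref{eq:proba in Hminus1}) does not suffice since every basis element has unit $H^1_d$-norm, yielding divergent sums.

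Concretely, after integration by parts one has $Z_{n,t}(h)=\langle h,Y_{n,t}\rangle_d$ with
\[
 Y_{n,t}:=\sum_{i,j}\frac{\lambda^i}{N^i}\int_0^t e^{(t-u)L}\xi^{ij}_u\dd B^i_j(T_{n-1}+u),
\]
where $\xi^{ij}_u$ is the vector whose only non-zero coordinate, in slot $i$, equals $-(\lambda^i)^{-1}\partial_\theta\delta_{\varphi^i_j(T_{n-1}+u)}$. Since $\partial_\theta\delta_\theta$ has $H^{-3/2-\gep}(\bbT)$-norm uniformly bounded in $\theta$, one has the deterministic estimate $\Vert\xi^{ij}_u\Vert_{-3/2-\gep,d}^2\leq C_\gep/\lambda^i$, and by duality $\Vert Z_{n,t}\Vert_{-1,d}=\Vert Y_{n,t}\Vert_{-1,d}$. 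Decomposing
\[
 Y_{n,t}-Y_{n,s}=\underbrace{\sum_{i,j}\frac{\lambda^i}{N^i}\int_s^t e^{(t-u)L}\xi^{ij}_u\dd B^i_j(T_{n-1}+u)}_{=:A}+\underbrace{(e^{(t-s)L}-I)Y_{n,s}}_{=:B},
\]
Hilbert-space BDG yields $\bE\Vert A\Vert_{-1,d}^{2m}\leq C_m\big(\sum_{i,j}(\lambda^i)^2(N^i)^{-2}\int_s^t\Vert e^{(t-u)L}\xi^{ij}_u\Vert_{-1,d}^2\dd u\big)^m$. Interpolating \eqref{ineq:L 2} with the uniform bound of $e^{\tau L}$ on $H^{-1}_d$ gives the smoothing estimate $\Vert e^{\tau L}\xi\Vert_{-1,d}\leq C_\gep(1+\tau^{-1/4-\gep/2})\Vert\xi\Vert_{-3/2-\gep,d}$, so that the quadratic-variation integral is bounded by $C_\gep N^{-1}((t-s)+(t-s)^{1/2-\gep})$ (using $\sum_i\lambda^i/N^i\lesssim 1/N$ from admissibility). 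This produces the claimed $N^{-m}((t-s)^m+(t-s)^{m(1/2-\gep)})$ bound for $A$. For $B$, the analytic-semigroup Lipschitz estimate $\Vert(e^{\tau L}-I)u\Vert_{-1,d}\leq C_\eta\tau^\eta\Vert u\Vert_{-1+2\eta,d}$ combined with a moment bound on $\Vert Y_{n,s}\Vert_{-1+2\eta,d}^{2m}$ obtained via the same BDG-smoothing recipe (taking $\eta$ just below $1/4$) gives a comparable contribution absorbed in the stated bound.

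The main obstacle lies in the semigroup estimates in the non-self-adjoint weighted scale $H^\alpha_d$: the interpolated smoothing $\Vert e^{tL}u\Vert_{-1,d}\leq C(1+t^{-\alpha/2})\Vert u\Vert_{-1-\alpha,d}$ for $\alpha\in[0,1]$ and the Lipschitz estimate $\Vert(e^{tL}-I)u\Vert_{-1,d}\leq Ct^\eta\Vert u\Vert_{-1+2\eta,d}$ for $\eta\in(0,1/2)$. These follow from the sectorial framework of Appendix~\ref{sec:appendix_spectral_semigroups} once one identifies the fractional powers $D(L^\eta)$ with interpolation spaces between $H^{-1}_d$ and $D(L)$. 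A secondary subtlety is the justification of Hilbert-space BDG for stochastic integrals driven by finitely many scalar Brownian motions with $H^{-1}_d$-valued integrands; here translation invariance of Sobolev norms plays the key role in delivering deterministic quadratic-variation bounds, without which one would need to carry random sup-norm estimates through the BDG application.
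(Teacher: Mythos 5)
Your argument is essentially the dual reformulation of the paper's proof, and it does close the estimate, but one of your framing claims about the paper's route is off.

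The paper proves Lemma~\ref{lem:moments Z} exactly by the approach you dismiss: fix an orthonormal basis $(h_l)$ of $H^1_d$, apply scalar Burkholder--Davis--Gundy to each martingale $M(h_l)$, and then sum over $l$. This does \emph{not} yield a divergent sum, because the summation is carried out \emph{inside} the bracket before any norm bound is applied: after BDG one is left with $\sum_l \sum_{i,j}\int (U^{i,j}(h_l))^2\dd u$, and Parseval collapses the $l$-sum to $\sum_{i,j}\int\|U^{i,j}\|^2_{-1,d}\dd u$, which is finite precisely because each $U^{i,j}$ is a bounded linear form on $H^1_d$ with an explicit (and here deterministic) $H^{-1}_d$-norm. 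The divergence you describe only occurs if one first replaces $|U^{i,j}(h_l)|$ by $\|U^{i,j}\|_{-1,d}\|h_l\|_{1,d}$ term by term, which is not what is done. So the scalar route is viable and is what the paper uses; in fact it has the virtue of requiring no vector-valued stochastic-integration machinery at all.

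Your decomposition $Y_{n,t}-Y_{n,s}=A+B$ with $A=\sum_{i,j}\frac{\lambda^i}{N^i}\int_s^t e^{(t-u)L}\xi^{ij}_u\dd B$ and $B=(e^{(t-s)L}-I)Y_{n,s}$ is, under the duality $Z_{n,t}(h)=\langle h,Y_{n,t}\rangle_d$, identical to the paper's split into $M^2$ (integration over $[s',t']$ with kernel $e^{(t-u)L^*}$) and $M^1$ (integration over $[0,s']$ with the increment kernel $e^{(t-u)L^*}-e^{(s-u)L^*}$): indeed $(e^{(t-s)L}-I)Y_{n,s}=\sum\int_0^s(e^{(t-u)L}-e^{(s-u)L})\xi^{ij}_u\dd B$. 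The smoothing input is also the same: your estimate $\|e^{\tau L}\xi\|_{-1,d}\leq C_\gep(1+\tau^{-1/4-\gep/2})\|\xi\|_{-3/2-\gep,d}$ is precisely \eqref{eq:reg_semigroup_mbeta} with $\beta=1/4+\gep/2$ (Proposition~\ref{prop:reg_semigroup}), while the paper uses its adjoint counterpart (Proposition~\ref{prop:reg_semigroup_adjoint}) together with $\delta_{\theta_0}\in H^{-1/2-\gep}$ uniformly in $\theta_0$; these are equivalent by transposition. The arithmetic $\sum_{i,j}(\lambda^i)^2(N^i)^{-2}\cdot(\lambda^i)^{-1}=\sum_i\lambda^i/N^i\lesssim N^{-1}$, the quadratic-variation integrals giving $(t-s)+(t-s)^{1/2-\gep}$ for $A$ and $(t-s)^{2\eta}$ with $\eta$ just below $1/4$ for $B$, and the moment bound on $\|Y_{n,s}\|_{-1+2\eta,d}$ all match the paper's estimates. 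So your proof is correct; the trade-off is that you invoke an $H$-valued BDG inequality (which is standard for integrals against finitely many scalar Brownian motions, with the deterministic $H^{-1}_d$-bound on $e^{(t-u)L}\xi^{ij}_u$ supplied by translation invariance of the Sobolev norm of $\partial_\theta\delta$), whereas the paper stays entirely at the level of scalar martingales plus Parseval. The net content is the same; only the packaging differs.

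One small point to tidy: for the $B$-term you need the smoothing estimate $\|e^{\tau L}u\|_{-1+2\eta,d}\leq C(1+\tau^{-\beta})\|u\|_{-1+2\eta-2\beta,d}$ with target $-1+2\eta-2\beta=-3/2-\gep$, hence $\beta=1/4+\eta+\gep/2$; square-integrability of the bracket forces $\beta<1/2$, i.e.\ $\eta<1/4-\gep/2$, so ``$\eta$ just below $1/4$'' must be read as $\eta<1/4-\gep/2$ and not merely $\eta<1/4$. With that caveat your bound $(t-s)^{2m\eta}$ lands on $(t-s)^{m(1/2-2\gep)}$ after relabeling $\gep$, as required.
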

Let us first prove Proposition~\ref{prop:estim_noise}, relying on these two lemmas.
\begin{proof}[Proof of Proposition~\ref{prop:estim_noise}]
Using Lemma \ref{lem:moments Z}, we can apply Lemma~\ref{th:GRR} with the choices
\begin{equation}
\phi(t)\, =\, Z_{n,t}\, , \ \ \ \chi(u)\, =\, u^{\frac{2+\gz}{2m}}\ \text{  and } \ \Psi(u)\, =\, u^{2m}\, ,
\end{equation}
which implies that there exist a constant $C$ (depending in $m$, $\gep$ and $\zeta$) and a positive random variable $B$ such that for every $0\leq s<t \leq T$:
\begin{equation}
\Vert Z_{n,t}-Z_{n,s} \Vert_{-1,d}^{2m}\, \le \, C ( t-s ) ^\zeta B \, ,
\end{equation}
where $B$ satisfies
\begin{equation}
\bE (B)\, \le \, \frac C{N^m} \int_0^T \int_0^T \left(\left\vert t-s \right\vert^{m(1/2-2\gep)-2- \zeta}+ \left\vert t-s \right\vert^{m-2-\zeta}
\right)\dd s \dd t\, .
\end{equation}
A simple integration shows that $\bE(B)\leq \frac{ C}{ N^{ m}} \left(T^{ m(1/2-2 \varepsilon) - \zeta} + T^{ m - \zeta}\right)$, whenever $m(1/2- 2 \varepsilon) - \zeta>1$ and $ m- \zeta>1$, that is when $m> \frac{ 2(1+ \zeta)}{ 1- 4 \varepsilon}$. We can fix for example $ \varepsilon= 1/8$ and choose an integer $m$ such that $m> 4(1+ \zeta)$. Since $T\geq1$, we have $\bE(B)\leq C\frac{T^{m-\zeta}}{N^m}$ and we obtain:
\begin{equation}
 \bE\left(\sup_{0\leq s< t\leq T}\frac{\Vert Z_{n,t}-Z_{n,s} \Vert_{-1,d}^{2m}}{|t-s|^\gz}\right)\, \leq \, C \frac{T^{m-\gz}}{N^m}\, ,
\end{equation}
which implies
\begin{multline}
 \bP\left( \sup_{0\leq t\leq T} \Vert Z_{n,t} \Vert_{-1,d} \geq \sqrt{\frac{T}{N}}N^\gz\right)\, \leq \, 
 \frac{N^m}{T^m}N^{-2 m\gz}
 \bE\left(\sup_{0\leq t\leq T}\Vert Z_{n,t}\Vert_{-1,d}^{2m}\right)\\
 \leq\, \frac{N^m}{T^{m-\zeta}}N^{-2 m\gz}
 \bE\left(\sup_{0\leq t\leq T}\frac{\Vert Z_{n,t} \Vert_{-1,d}^{2m}}{t^\gz}\right)\,
 \leq\, CN^{-2m\gz}\, .
\end{multline}
We deduce
\begin{equation}
 \bP\left( \sup_{1\leq n\leq n_f}\sup_{0\leq t\leq T} \Vert Z_{n,t} \Vert_{-1,d} \, \geq\,  \sqrt{\frac{T}{N}}N^\gz\right)\, \leq \, 
 C n_f N^{-2m\gz}\, ,
\end{equation}
which tends to $0$ as $N\to\infty$ if we choose $m> \frac{ 1}{ 4 \zeta}$, since $n_f=O(N^{1/2})$. Proposition~\ref{prop:estim_noise} is proved.
\end{proof}

Let us now prove Lemma \ref{lem:moments Z}.

\begin{proof}[Proof of Lemma \ref{lem:moments Z}]
Our aim here is to get the appropriate bounds for the process $Z$.  We follow mostly the ideas of \cite{MR3207725}. Recall that
\begin{equation}
 Z_{n,t}(h)\, =\,\sum_{i=-d}^d\frac{\lambda^i}{N^i}\sum_{j=1}^{N^i} \int_{T_{n-1}}^{T_{n-1}+t}\partial_\theta\left[ \left(
 e^{(t-u)L_{\psi_{n-1}}^*}h\right)^i\right]\big(\varphi^i_j(u)\big)\dd B^i_j(u)\, .
\end{equation}
Let us define the process $Z_{n,t,t'}$ for $0<t'<t$ as
\begin{equation}
 Z^i_{n,t,t'}(h)\, =\, \sum_{i=-d}^d\frac{\lambda^i}{N^i}\sum_{j=1}^{N^i} \int_{T_{n-1}}^{T_{n-1}+t'}\partial_\theta\left[ \left(
 e^{(t-u)L_{\psi_{n-1}}^*}h\right)^i\right]\big(\varphi^i_j(u)\big)\dd B^i_j(u)\, .
\end{equation}
Our aim is to estimate for $0<s'<s<t$, $s'<t'<t$ and for all integers $m>0$ the moments $\bE( \Vert Z_{n,t,t'}-Z_{n,s,s'}\Vert_{-1,d}^{2m})$.
We can decompose $Z_{n,t,t'}-Z_{n,s,s'}$ as follows:
\begin{equation}
 Z_{n,t,t'}-Z_{n,s,s'}\, =\, M^1_{n,s',s,t}+M^2_{n,s',t',t}\, ,
\end{equation}
where
\begin{equation}
 M^1_{n,s',s,t}(h)\, =\,  \sum_{i=-d}^d\frac{\lambda^i}{N^i}\sum_{j=1}^{N^i} \int_{T_{n-1}}^{T_{n-1}+s'}\partial_\theta\left[ \left(
 \left( e^{(t-u)L_{\psi_{n-1}}^*}-e^{(s-u)L_{\psi_{n-1}}^*}\right)h\right)^i\right]\big(\varphi^i_j(u)\big)\dd B^i_j(u)\, ,
\end{equation}
and
\begin{equation}
 M^2_{n,s',t',t}(h)\, =\, \sum_{i=-d}^d\frac{\lambda^i}{N^i}\sum_{j=1}^{N^i} \int_{T_{n-1}+s'}^{T_{n-1}+t'}\partial_\theta\left[ \left(
 e^{(t-u)L_{\psi_{n-1}}^*}h\right)^i\right]\big(\varphi^i_j(u)\big)\dd B^i_j(u)\, .
\end{equation}
The processes $(M^1_{n,s',s,t}(h))_{ s'\in[0,s)}$ and $(M^2_{n,s',t',t}(h))_{t'\in (s',t)}$ are martingales, with It\^o brackets
\begin{equation}
 \left[M^1_{n,\cdot,s,t}(h)\right]_{s'}\, =\, \sum_{i=-d}^d\sum_{j=1}^{N^i}\int_{T_{n-1}}^{T_{n-1}+s'}
 \left( U^{1,i,j}_{n,u,s,t}(h)\right)^2\dd u\, ,
\end{equation}
and
\begin{equation}
 \left[M^2_{n,s',\cdot,t}(h)\right]_{t'}\, =\, \sum_{i=-d}^d\sum_{j=1}^{N^i}\int_{T_{n-1}+s'}^{T_{n-1}+t'}
 \left(U^{2,i,j}_{n,u,t}(h)  \right)^2\dd u\, ,
\end{equation}
where we have used the notations
\begin{equation}
 U^{1,i,j}_{n,u,s,t}(h)\, =\, \frac{\lambda^i}{N^i}\partial_\theta\left[ \left(
 \left( e^{(t-u)L_{\psi_{n-1}}^*}-e^{(s-u)L_{\psi_{n-1}}^*}\right)h\right)^i\right]\big(\varphi^i_j(u)\big)\, ,
\end{equation}
and
\begin{equation}
 U^{2,i,j}_{n,u,t}(h)\, =\, \frac{\lambda^i}{N^i}\partial_\theta\left[ \left(
 e^{(t-u)L_{\psi_{n-1}}^*}h\right)^i\right]\big(\varphi^i_j(u)\big)\, .
\end{equation}
Let $(h_l)_{l\geq 1}$ be a complete orthonormal basis in $H^1_d$. Using Parseval's identity, we obtain
\begin{multline}
\bE \Vert Z_{n,t,t'}-Z_{n,s,s'}\Vert^2_{-1,d}\, =\,  \sum_{l=1}^\infty \bE|(Z_{n,t,t'}-Z_{n,s,s'})(h_l)|^2\\
 \leq\, 2 \sum_{l=1}^\infty \bE |M^1_{n,s',s,t}(h_l)|^2+2 \sum_{l=1}^\infty \bE|M^2_{n,s',t',t}(h_l)|^2\\
 \leq\, 2\sum_{l=1}^\infty \sum_{i=-d}^d\sum_{j=1}^{N^i}\int_{T_{n-1}}^{T_{n-1}+s'}
 \bE\left( U^{1,i,j}_{n,u,s,t}(h_l)\right)^2\dd u
 +2\sum_{l=1}^\infty \sum_{i=-d}^d\sum_{j=1}^{N^i}\int_{T_{n-1}+s'}^{T_{n-1}+t'}
 \bE \left( U^{2,i,j}_{n,u,t}(h_l)\right)^2\dd u\\
 \leq\,  2\sum_{i=-d}^d\sum_{j=1}^{N^i}\int_{T_{n-1}}^{T_{n-1}+s'}
 \bE\Vert U^{1,i,j}_{n,u,s,t}\Vert_{-1,d}^2\dd u
 +2 \sum_{i=-d}^d\sum_{j=1}^{N^i}\int_{T_{n-1}+s'}^{T_{n-1}+t'}
\bE\Vert U^{2,i,j}_{n,u,t}\Vert_{-1,d}^2\dd u\, .
\end{multline}
For $m>1$, we have
\begin{multline}
\bE \Vert Z_{n,t,t'}-Z_{n,s,s'}\Vert^{2m}_{-1,d}\, =\, \bE \left(\sum_{l=1}^\infty |(Z_{n,t,t'}-Z_{n,s,s'})(h_l)|^2
\right)^m\\
 \leq\, m \bE\left( \sum_{l=1}^\infty |M^1_{n,s',s,t}(h_l)|^2\right)^m
 +m \bE\left(\sum_{l=1}^\infty |M^2_{n,s',t',t}(h_l)|^2\right)^m\, ,
\end{multline}
and using  H\"older and Burkholder-Davis-Gundy inequalities, we obtain for the terms involving $M^1$
\begin{align}
\bE \left( \sum_{l=1}^\infty  |M^1_{n,s',s,t}(h_l)|^2\right)^m
&=\, \sum_{l_1,l_2,\ldots,l_m=1}^\infty \bE |M^1_{n,s',s,t}(h_{l_1})|^2 \ldots |M^1_{n,s',s,t}(h_{l_m})|^2 \nonumber\\
&\leq\,  \sum_{l_1,l_2,\ldots,l_m=1}^\infty (\bE |M^1_{n,s',s,t}(h_{l_1})|^{2m})^{1/m} \ldots  (\bE|M^1_{n,s',s,t}(h_{l_m})|^{2m})^{1/m} \nonumber\\
&\leq\, C_m \sum_{l_1,l_2,\ldots,l_m=1}^\infty\bE \left[M^1_{n,\cdot,s,t}(h_{l_1})\right]_{s'} \ldots \bE\left[M^1_{n,\cdot,s,t}(h_{l_m})\right]_{s'} \nonumber\\
&\leq\,  C_m \sum_{l_1,l_2,\ldots,l_m=1}^\infty \left(\sum_{i=-d}^d  \sum_{j=1}^{N^i}\int_{T_{n-1}}^{T_{n-1}+s'}\bE \left( U^{1,i,j}_{n,u,s,t}(h_{l_1})\right)^2\dd u\right)\nonumber\\
 &\qquad\qquad\qquad\ldots
 \left(\sum_{i=-d}^d  \sum_{j=1}^{N^i}\int_{T_{n-1}}^{T_{n-1}+s'}\bE \left( U^{1,i,j}_{n,u,s,t}(h_{l_m})\right)^2\dd u\right)\nonumber\\
 &=\, C_m \left(\sum_{l=1}^\infty \sum_{i=-d}^d  \sum_{j=1}^{N^i}\int_{T_{n-1}}^{T_{n-1}+s'}\bE\left(  U^{1,i,j}_{n,u,s,t}(h_{l})\right)^2\dd u\right)^m \nonumber\\ 
& =\, C_m \left( \sum_{i=-d}^d\sum_{j=1}^{N^i}\int_{T_{n-1}}^{T_{n-1}+s'}
 \bE\Vert U^{1,i,j}_{n,u,s,t}\Vert_{-1,d}^2\dd u\right)^m\, .
\end{align}
The same work can be done for the terms involving $M^2$, which leads to
\begin{multline}\label{eq:first bound moment 2m}
 \bE \Vert Z_{n,t,t'}-Z_{n,s,s'}\Vert^{2m}_{-1,d}\, \leq\,  C'_m\left( \sum_{i=-d}^d\sum_{j=1}^{N^i}\int_{T_{n-1}}^{T_{n-1}+s'}
 \bE\Vert U^{1,i,j}_{n,u,s,t}\Vert_{-1,d}^2\dd u\right)^m\\
 +C'_m\left( \sum_{i=-d}^d\sum_{j=1}^{N^i}\int_{T_{n-1}+s'}^{T_{n-1}+t'}
 \bE\Vert U^{2,i,j}_{n,u,t}\Vert_{-1,d}^2\dd u\right)^m\, .
\end{multline}
It remains now to find appropriate bounds for 
$\bE\Vert U^{1,i,j}_{n,u,s,t}\Vert_{-1,d}^2$ and $\bE\Vert U^{2,i,j}_{n,u,t}\Vert_{-1,d}^2$. On one hand, for $h\in H^1_d$, since $\gd_{\theta_0}\in H^{-1/2-\gep}$ for all $\gep>0$, we have
\begin{equation}
 |U^{2,i,j}_{n,u,t}(h)|\, \leq\, \frac{C}{N^i}
 \left\Vert \partial_\theta\left[\left(e^{(t-u)L^*_\gd} h\right)^i\right]\right\Vert_{1/2+\gep, d}
 \, \leq\,  \frac{C}{N^i}
 \left\Vert \left(e^{(t-u)L^*_\gd} h\right)^i\right\Vert_{3/2+\gep, d}\, .
\end{equation}
For the rest of the proof, we set $ \varepsilon= \frac{ 1}{ 8}$ (any $ \varepsilon\in(0, 1/4)$ would be sufficient). Applying Proposition~\ref{prop:reg_semigroup_adjoint} with $\beta=1/4+\gep/2$, we obtain, for any $ 0<\gamma< \gamma_{L_{ \delta}^{ \ast}}$,
\begin{equation}
 |U^{2,i,j}_{n,u,t}(h)|\, \leq\, \frac{C}{N^i}\left(1+e^{- \gamma(t-u)}(t-u)^{-1/4-\gep/2}\right)\Vert h\Vert_{1,d}\, ,
\end{equation}
which means that $\Vert U^{2,i,j}_{n,u,t}\Vert_{-1,d}\leq  \frac{C}{N^i}\left(1+e^{- \gamma(t-u)}(t-u)^{-1/4-\gep/2}\right)$. On the other hand, proceeding as before, we get the bound:
\begin{equation}
 |U^{1,i,j}_{n,u,s,t}(h)|\, \leq\,  \frac{C}{N^i}
 \left\Vert \left(\left[e^{(t-s)L^*_\gd}-1\right]e^{(s-u)L^*_\gd} h\right)^i\right\Vert_{3/2+\gep, d}\, .
\end{equation}
Applying Proposition \ref{prop:reg_semigroup_adjoint} with $\beta'=1/4+\gep/2$ 
and $\beta=1/4-\gep$, we get for all $\tilde h \in H^{2-\gep}_d$,
\begin{equation}
 \Vert [e^{(t-s)L^*_\gd}-1]\tilde h\Vert_{3/2+\gep,d}\, \leq\, C_\gep (t-s)^{1/4-\gep}\Vert (1-P^{ 0, \ast})\tilde h\Vert_{2-\gep,d}\, .
\end{equation}
For $\tilde h=e^{(s-u)L^*_\gd} h$ and using again Proposition \ref{prop:reg_semigroup_adjoint} with this time $\beta=1/2-\gep/2$, this leads to
\begin{equation}
 |U^{1,i,j}_{n,u,s,t}(h)|\, \leq\, \frac{C}{N^i}(t-s)^{1/4-\gep}(s-u)^{-1/2+\gep/2}e^{- \gamma(s-u)}\Vert h\Vert_{1,d}\, ,
\end{equation}
which means that $\Vert U^{1,i,j}_{n,u,s,t}\Vert_{-1,d}\leq  \frac{C}{N^i}(t-s)^{1/4-\gep}(s-u)^{-1/2+\gep/2}e^{- \gamma(s-u)}$. We can now estimate \eqref{eq:first bound moment 2m}: using that $N\leq cN^i\leq C N$, we obtain
\begin{align}
\bE \Vert Z_{n,t,t'}-Z_{n,s,s'}\Vert^{2m}_{-1,d}\, &\leq\,  \frac{C''_m}{N^m} \left((t-s)^{1/2-2\gep}\int_{0}^{s'}(s-u)^{-1+2\gep}e^{-2 \gamma(s-u)}\dd u\right)^m \nonumber\\
 &+\frac{C''_m}{N^m}\left(\int_{s'}^{t'}\left(1+e^{-2 \gamma(t-u)}(t-u)^{-1/2-\gep}\right)\dd u\right)^m \nonumber\\
 &\leq\, \frac{C'''_m}{N^m}\big((t-s)^{m(1/2-2\gep) }+(t'-s')^{m(1/2-\gep)} +(t'-s')^{m}\big)\, .\label{eq:Znttss}
\end{align}
Taking $t'\nearrow t$ and $s'\nearrow s$ and using Fatou Lemma, we deduce the result.
\end{proof}

\section{Dynamics on the manifold $M$}
\label{sec:dynamics_manifold}
The purpose of this section is to prove the results described in Section~\ref{subsec:dyn M} concerning the process $ \nu_{ n}$ defined in \eqref{eq:nu_stopped}.

Recall that the scheme defined in Section~\ref{subsec:dyn M} starts at a time $t_0^N=O(N^{ -1/2}\log N)$, such that there exists an event $B^N$ with $\bP(B^N)\rightarrow 1$ such that on $B^N$ if we denote $\psi_0=\proj_M(\mu_{N, N^{ 1/2} t^N_0})$ then $\Vert \mu_{N, N^{ 1/2} t^N_0}-q_{\psi_0}\Vert_{-1,d}\leq  N^{-1/2+2\zeta}$. In other words, the initial condition of the scheme satisfies $\Vert \nu_{1,0}\Vert_{-1,d}\leq N^{-1/2+2\zeta}$ on $B^N$.The existence of these times $t_0^N$ and event $B^N$ will be proved in the Section~\ref{sec:approach}. The first result proves estimate \eqref{eq:bound_nunt_intro}:
\begin{proposition}\label{prop:bound_nu}
There exists an event $ \Omega_{ 1}^{ N}$ with $\bP( \Omega_{ 1}^{ N})\rightarrow 1$ as $N\to \infty$ 
such that, almost surely on $ \Omega_{ 1}^{ N}$,
 \begin{equation}
 \sup_{1\leq n\leq n_f}\sup_{t\in [0,T]} \Vert \nu_{n,t}\Vert_{-1,d}\, = \, O( N^{-1/2+2\zeta})\, ,
\end{equation}
where the error $O( N^{-1/2+2\zeta})$ is uniform on $\gO_1^N$.
\end{proposition}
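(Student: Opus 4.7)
My plan is to argue by induction on $n = 1, \ldots, n_f$, on the event $\gO_1^N := A^N \cap B^N$, which has probability tending to $1$ by Propositions~\ref{prop:approach_M} and~\ref{prop:estim_noise}. The induction hypothesis will be that $\|\nu_{n,0}\|_{-1,d} \leq C_\star N^{-1/2+2\zeta}$ for some absolute constant $C_\star$ (chosen later, independently of $N$ and $n$) and that $\tau^n = T$, for every $n$ up to the current one. The base case $n = 1$ is provided by Proposition~\ref{prop:approach_M}; the bulk of the work is the induction step.

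The first sub-step will be an a~priori one-step bound. Starting from the mild equation~\eqref{eq:mild_nu_n}, admissibility yields $|\lambda_N^k - \lambda^k| \leq N^{-1/2+\zeta}$, hence $\|D_{\psi_{n-1}}\|_{-1,d} \leq C N^{-1/2+\zeta}$, while standard product estimates in $H^{-1}_d$ give $\|R_{\psi_{n-1}}(\nu)\|_{-1,d} \leq C(N^{-1/2+\zeta}\|\nu\|_{-1,d} + \|\nu\|_{-1,d}^2)$. Combining with the regularization bound~\eqref{ineq:L 2} and the noise control on $A^N$, I obtain
\begin{equation*}
\sup_{t \in [0,T]}\|\nu_{n,t\wedge \tau^n}\|_{-1,d} \,\leq\, C_L \|\nu_{n,0}\|_{-1,d} + C_1 N^{-1/2+\zeta} + C_2\bigl(N^{-1/2+\zeta} + \sigma\bigr)\sup_{t \in [0,T]}\|\nu_{n,t \wedge \tau^n}\|_{-1,d} + C_Z\sqrt{T/N}\,N^\zeta,
\end{equation*}
where I have used the stopping-time constraint $\|\nu_{n,t\wedge \tau^n}\|_{-1,d} \leq \sigma$ to absorb the quadratic nonlinearity. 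Taking $\sigma$ small enough and $N$ large, the last prefactor is $\leq 1/2$ and a bootstrap gives $\sup_{t \in [0,T]}\|\nu_{n,t \wedge \tau^n}\|_{-1,d} \leq 2 C_L \|\nu_{n,0}\|_{-1,d} + C_3 N^{-1/2+\zeta}$. Under the induction hypothesis this bound is strictly smaller than $\sigma$, which rules out the stopping time being triggered and forces $\tau^n = T$.

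The second and more delicate sub-step is to turn this into a geometric contraction on $\|\nu_{n+1,0}\|_{-1,d}$. The projection map of Lemma~\ref{lem:existence projM} is $C^\infty$, and the identity $P^0_\psi \partial_\theta q_\psi = \partial_\theta q_\psi$ combined with~\eqref{eq:def p psi} gives $\mathtt{p}_\psi(\partial_\theta q_\psi) = 1$; differentiating the defining relation $\mathtt{p}_{\proj_M(\mu)}(\mu - q_{\proj_M(\mu)}) = 0$ at $\mu = q_\psi$ then shows that $D\proj_M(q_\psi) = \mathtt{p}_\psi$. A Taylor expansion at $q_{\psi_{n-1}}$ applied to $\mu_{T_n} = q_{\psi_{n-1}} + \nu_{n,T}$ therefore yields $\nu_{n+1,0} = P^s_{\psi_{n-1}} \nu_{n,T} + O(\|\nu_{n,T}\|_{-1,d}^2)$. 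Applying $P^s_{\psi_{n-1}}$ to the mild equation at $t = T$ and using that $P^s_{\psi_{n-1}}$ commutes with $e^{T L_{\psi_{n-1}}}$ together with $P^s_{\psi_{n-1}}\nu_{n,0} = \nu_{n,0}$, the leading term becomes $e^{T L_{\psi_{n-1}}} \nu_{n,0}$, of norm at most $C_L e^{-\gamma_L T} \|\nu_{n,0}\|_{-1,d} \leq (4 C_P)^{-1} \|\nu_{n,0}\|_{-1,d}$ thanks to~\eqref{eq:hyp_bound_T}. The projected integral of $D + \partial_\theta R$ and the projected noise $P^s_{\psi_{n-1}} Z_{n,T}$ contribute $O(N^{-1/2+\zeta})$ using Step~1, so
\begin{equation*}
\|\nu_{n+1,0}\|_{-1,d} \,\leq\, \frac{1}{4 C_P}\,\|\nu_{n,0}\|_{-1,d} + C_4 N^{-1/2+\zeta}.
\end{equation*}
Iterating this geometric recursion gives $\|\nu_{n,0}\|_{-1,d} \leq \|\nu_{1,0}\|_{-1,d} + 2 C_4 N^{-1/2+\zeta} = O(N^{-1/2+2\zeta})$ uniformly in $n \leq n_f$, closing the induction for $C_\star$ large enough; substituting back into Step~1 gives the announced uniform control.

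The main obstacle lies in the second sub-step: the contraction factor must be \emph{strictly} less than one, because $n_f = O(N^{1/2})$ so any multiplicative factor $\geq 1$ would cause the additive $O(N^{-1/2+\zeta})$ errors to pile up beyond $O(N^{-1/2+2\zeta})$ by time $t_f$. This is precisely the reason for the choice of $T$ in~\eqref{eq:hyp_bound_T}, which ensures that the exponential contraction of the semigroup on the stable subspace $N_{\psi_{n-1}}$ dominates the projection constant $C_P$ arising from the Taylor expansion of $\proj_M$. A careful bookkeeping of the quadratic error terms, both from $R(\nu)$ and from the nonlinearity of the projection, is needed to guarantee that they do not spoil this contraction.
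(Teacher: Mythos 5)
Your proposal is correct and follows essentially the same structure as the paper's proof: induction on $n$ using the mild equation~\eqref{eq:mild_nu_n}, the semigroup contraction calibrated by the choice of $T$ in~\eqref{eq:hyp_bound_T}, the decomposition $\nu_{n+1,0}=P^s_{\psi_{n-1}}\nu_{n,T}+O(\|\nu_{n,T}\|_{-1,d}^2)$ via the smoothness of $\proj_M$, and the admissibility bound $\Vert D_{\psi_{n-1}}\Vert_{-1,d}=O(N^{-1/2+\zeta})$. The only noteworthy deviations are presentational: the paper absorbs the quadratic $R$-term by introducing a step-internal time $t^*$ at which the norm first exceeds $2C_LN^{-1/2+2\zeta}$ (so the absorption is automatic since that threshold tends to $0$), while you instead use the a priori stopping tube of radius $\sigma$ and must therefore assume $\sigma$ small enough that $C_2\sigma<1/2$ for the bootstrap, a harmless but implicit extra shrinking of $\sigma$; and the paper closes the induction by showing the bound $N^{-1/2+2\zeta}$ is exactly preserved step to step, whereas you set up a genuine geometric contraction $a_{n+1}\leq\tfrac{1}{4C_P}a_n+C_4N^{-1/2+\zeta}$ and sum the series, which is equivalent but leaves you with a larger constant $C_\star$.
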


\begin{proof}[Proof of Proposition~\ref{prop:bound_nu}]
Recall the definition of the event $A^{ N}$ in \eqref{eq:AN} and define $\gO^N_1\, :=\, A^N\cap B^N$. Since the purpose of Section~\ref{sec:control_noise} was precisely to prove that $\bP(A^{ N})\to 1$, we obviously have that $\bP(\gO^N_1)\rightarrow 1$, as $N\to\infty$.

Throughout this proof we work on the event $\gO^N_1$ and proceed by induction. We already know that $\Vert \nu_{1,0}\Vert_{-1,d}\leq N^{-1/2+2\zeta}$.
If we suppose that $\Vert \nu_{n,0}\Vert_{-1,d}\leq N^{-1/2+2\zeta}$,
then from the mild formulation \eqref{eq:mild_nu_n}, from \eqref{ineq:L 1} and \eqref{ineq:L 2} and from the estimates on the noise term $Z_{n,t}$ on $\gO_1^N\subset A^{ N}$, we obtain
\begin{multline}
 \Vert \nu_{n,t}\Vert_{-1,d}\, \leq\, C_Le^{-\gamma_L t\wedge \tau^{ n}}N^{-1/2+2\zeta}\\
 +2TC_L\Vert D_{\psi_{n-1}}\Vert_{-1,d}
 + C_L(T+2T^{1/2})\sup_{0\leq s\leq t}\Vert R_{\psi_{n-1}}(\nu_{n,s})\Vert_{-1,d}\\
 +T^{1/2}N^{-1/2+\zeta}\, .\label{eq:est_nu_nt}
\end{multline}
Since the sequence $(\go_i)_{i\geq 1}$ is admissible (recall Definition~\ref{def:disorder}), we have 
\begin{equation}\label{eq:bound_D}
 \Vert D_{\psi_{n-1}}\Vert_{-1,d}\, \leq\, C N^{-1/2}\max_{k=-d,\ldots,d} |\xi_{ N}^k|\, \leq\, C N^{-1/2+\zeta}\, .
\end{equation}
Define the time $t^*$ as
\begin{equation}
 t^* \, :=\, \inf\left\{t\in [0,T]:\, \Vert \nu_{n,t}\Vert_{-1,d} \geq 2 C_LN^{-1/2+2\zeta}\right\}\, .
\end{equation}
Obviously $t^*>0$ and if $t\leq t^*$, one readily sees from \eqref{eq:R} that
\begin{multline}
 \sup_{0\leq s\leq t}\Vert R_{\psi_{n-1}}(\nu_{n,s})\Vert_{-1,d}\\ \leq\, 
 C\bigg(\sup_{0\leq s\leq t}\Vert \nu_{n,s}\Vert_{-1,d}^2 
 + N^{-1/2} \max_{k=-d,\ldots,d} |\xi_{ N}^k|
 \sup_{0\leq s\leq t}\Vert \nu_{n,s}\Vert_{-1,d}\bigg) \leq\, CN^{-1+4\zeta}\, .\label{eq:bound_R_psi_n}
\end{multline}
Putting together \eqref{eq:est_nu_nt}, \eqref{eq:bound_D} and \eqref{eq:bound_R_psi_n} gives that $t^*=T$ if $N$ is large enough. Consequently, by construction of the stopping time $\tau^{ n}$ in \eqref{eq:tau_n}, one has that $\tau^{ n}=T$ and the choice of $T$ (recall \eqref{eq:hyp_bound_T}) implies that
\begin{equation}
\Vert \nu_{n,T}
\Vert_{-1,d}\, \leq\,   \frac{1}{2 C_P}N^{-1/2+2\zeta}\, . 
\end{equation}
To conclude the recursion it remains to show that $\Vert \nu_{n+1,0}\Vert_{-1,d}\leq N^{-1/2+2\zeta}$.
To do this, let us write $\nu_{n+1,0}$ in terms of $\nu_{n,T}$:
\begin{equation}
 \nu_{n+1,0}\, =\, q_{\psi_{n-1}}+\nu_{n,T}-q_{\psi_n}\, .
\end{equation}
Since $P^s_{\psi_n}\nu_{n+1,0}=\nu_{n+1,0}$, where we recall that $P^s_{\psi_{n}}$ is the projection on the space $N_{ \psi_{ n}}$, we can rewrite it as
\begin{multline}\label{decomp_nu0}
 \nu_{n+1,0}\, =\, P^s_{\psi_n}(q_{\psi_{n-1}}+\nu_{n,T}-q_{\psi_n})\\
 =\, P^s_{\psi_n}(q_{\psi_{n-1}}-q_{\psi_n})+(P^s_{\psi_n}-P^s_{\psi_{n-1}})\nu_{n,T} +P^s_{\psi_{n-1}}\nu_{n,T}\, .
\end{multline}
Since $q_{\psi_{n-1}}-q_{\psi_n} = (\psi_{n-1}-\psi_n)q'_{\psi_n}+O((\psi_n-\psi_{n-1})^2)$ (and this estimate makes sense in $H_{ d}^{ -1}$) and $P^{ s}_{ \psi_{ n}} \partial_{ \theta}q_{\psi_n}=0$, 
the first term of the second line of \eqref{decomp_nu0} is of order $O((\psi_n-\psi_{n-1})^2)$. Using the smoothness of the projection $\proj_M$ (Lemma \ref{lem:existence projM}),
\begin{align}
 |\psi_n-\psi_{n-1}| &=\, |\proj_M(\mu_{(n\wedge n_\tau-1)T+t\wedge \tau^n}) -\proj_M(\mu_{((n-1)\wedge n_\tau-1)T+t\wedge \tau^{n-1 }})| \nonumber\\
 &\leq\, C\Vert \mu_{(n\wedge n_\tau-1)T+t\wedge \tau^n}-\mu_{((n-1)\wedge n_\tau-1)T+t\wedge \tau^{ n-1}}\Vert_{-1,d} \nonumber\\
 &\leq\, C\Vert \nu_{n-1,T}\Vert_{-1,d}+C\Vert \nu_{n-1,0}\Vert_{-1,d}\, \leq\, CN^{-1/2+2\zeta}\, .\label{eq:bound_delta_psi}
\end{align}
Combining the last two arguments, we obtain that the first term of the second line of \eqref{decomp_nu0} is of order $O(N^{-1+4\zeta})$.
For the second term, the smoothness of the mapping $\psi\mapsto P^s_\psi$ gives
\begin{equation}
 \Vert (P^s_{\psi_n}-P^s_{\psi_{n-1}})\nu_{n,T}\Vert_{-1,d}\, \leq\, C|\psi_n-\psi_{n-1}|\Vert \nu_{n,T}\Vert_{-1,d}
 \, \leq\, CN^{-1+4\zeta}\, .
\end{equation}
Taking the $H^{-1}_d$ norm on the two sides in \eqref{decomp_nu0}, we obtain
\begin{equation}
 \Vert \nu_{n+1,0}\Vert_{-1,d}\, \leq\, \Vert P^s_{\psi_{n-1}}\nu_{n,T}\Vert_{-1,d}+ O(N^{-1+4\zeta})
 \,\leq\, \frac{1}{2}N^{-1/2+2\zeta}+ O(N^{-1+4\zeta})\, ,
\end{equation}
which implies the result for $N$ large enough.
\end{proof}

We are interested in the rescaled dynamics of the phase of the projection of the empirical measure on $M$ and in particular use the rescaled discretization of this phase dynamics given by the process $\varPsi^N_t$ (recall \eqref{eq:def PsiNt}).

\begin{proposition}\label{prop:expansion_psi}
 There exist a linear form $b:\bbR^{ 2d+1}\rightarrow \bbR$ and an event
 $\gO^N_2$ satisfying $\bP(\gO^N_2)\rightarrow 1$ as $N\to \infty$
 such that on the event $\gO^N_2$ we have for $t\in [t_0^N ,t_f]$:
 \begin{equation}
 \label{eq:PsiN_t}
 \varPsi^N_t\, =\,\psi_0+  b(\xi_N) t+O(N^{-1/4+2\zeta})\, ,
\end{equation}
where the $O(N^{-1/4+2\zeta})$ is uniform on $\gO^N_2$.
\end{proposition}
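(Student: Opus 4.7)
The plan is to extract a discrete evolution for the phase $\psi_n$ by projecting the mild equation \eqref{eq:mild_nu_n} onto the tangent line to $M$ at $q_{\psi_{n-1}}$, and then sum the resulting increments over $n=1,\ldots,n_t$. Throughout I work on the event $\Omega_1^N$ of Proposition~\ref{prop:bound_nu}, where $\tau^n=T$ for every $n\leq n_f$, so the stopping can be ignored. Applying the linear form $\mathtt{p}_{\psi_{n-1}}$ of \eqref{eq:def p psi} to \eqref{eq:mild_nu_n} at $t=T$, and using that $P^{0}_{\psi_{n-1}}$ commutes with $L_{\psi_{n-1}}$ while $e^{tL_{\psi_{n-1}}}$ is the identity on $T_{\psi_{n-1}}=\Span(\partial_\theta q_{\psi_{n-1}})$, one has $\mathtt{p}_{\psi_{n-1}}(e^{tL_{\psi_{n-1}}}u)=\mathtt{p}_{\psi_{n-1}}(u)$ for all $u\in H_d^{-1}$, and hence
\begin{equation}
\mathtt{p}_{\psi_{n-1}}(\nu_{n,T})\, =\, \mathtt{p}_{\psi_{n-1}}(\nu_{n,0})\, -\, T\,\mathtt{p}_{\psi_{n-1}}(D_{\psi_{n-1}})\, +\, J_n\, +\, \mathtt{p}_{\psi_{n-1}}(Z_{n,T})\, ,
\end{equation}
where $J_n$ collects the integrated $R$ contribution. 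The first summand vanishes since $\psi_{n-1}=\proj_M(\mu_{T_{n-1}})$ forces $P^{0}_{\psi_{n-1}}\nu_{n,0}=0$.

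Next, writing $\nu_{n,T}=\nu_{n+1,0}+(q_{\psi_n}-q_{\psi_{n-1}})$ with the $H^{-1}_d$-Taylor expansion $q_{\psi_n}-q_{\psi_{n-1}}=(\psi_n-\psi_{n-1})\partial_\theta q_{\psi_{n-1}}+O(|\psi_n-\psi_{n-1}|^2)$, applying $\mathtt{p}_{\psi_{n-1}}$ (which sends $\partial_\theta q_{\psi_{n-1}}$ to $1$) and using that $P^{0}_{\psi_n}\nu_{n+1,0}=0$ together with the smoothness of $\psi\mapsto \mathtt{p}_\psi$ yields
\begin{equation}
\psi_n-\psi_{n-1}\, =\, \mathtt{p}_{\psi_{n-1}}(\nu_{n,T})\, +\, O\bigl(N^{-1+4\zeta}\bigr)\, ,
\end{equation}
since $\mathtt{p}_{\psi_{n-1}}(\nu_{n+1,0})=(\mathtt{p}_{\psi_{n-1}}-\mathtt{p}_{\psi_n})(\nu_{n+1,0})=O(|\psi_n-\psi_{n-1}|\cdot\|\nu_{n+1,0}\|_{-1,d})$ and both $|\psi_n-\psi_{n-1}|$ and $\|\nu_{n+1,0}\|_{-1,d}$ are of order $N^{-1/2+2\zeta}$ by \eqref{eq:bound_delta_psi} and Proposition~\ref{prop:bound_nu}. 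The rotational equivariance of the whole construction (all quantities at phase $\psi$ are the $\psi$-translates of those at $\psi=0$) makes $\mathtt{p}_\psi(D_\psi)$ independent of $\psi$ and, since $\lambda^k_N-\lambda^k=N^{-1/2}\xi^k_N$, linear in $\xi_N$. I therefore define
\begin{equation}
b(\xi)\, :=\, -\,\mathtt{p}_{0,\delta}\!\left(\partial_\theta\!\left(q_{0,\delta}\sum_{k=-d}^d \xi^k\, J\ast q^k_{0,\delta}\right)\right)\, ,
\end{equation}
so that $-T\,\mathtt{p}_{\psi_{n-1}}(D_{\psi_{n-1}})=T\,N^{-1/2}\,b(\xi_N)$. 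For the $R$ integral, $\mathtt{p}_{\psi_{n-1}}$ is represented by pairing against a smooth test function, so an integration by parts against the $\partial_\theta$ in $R_{\psi_{n-1}}$ combined with the bound $\|R_{\psi_{n-1}}(\nu_{n,s})\|_{-1,d}=O(N^{-1+4\zeta})$ from \eqref{eq:bound_R_psi_n} gives $|J_n|=O(TN^{-1+4\zeta})$. Summing from $n=1$ to $n_t$ and using $n_tTN^{-1/2}=(t-t^N_0)+O(N^{-1/2})=t+o(1)$ produces
\begin{equation}
\varPsi^N_t-\psi_0\, =\, b(\xi_N)\,t\, +\, \sum_{n=1}^{n_t}\mathtt{p}_{\psi_{n-1}}(Z_{n,T})\, +\, O\bigl(N^{-1/2+4\zeta}\bigr)\, .
\end{equation}

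The main obstacle, and the last step, is to control the noise sum uniformly in $n_t$. The crude per-step bound furnished by the event $A^N$ of \eqref{eq:AN} would only give $n_f\cdot N^{-1/2+\zeta}=N^{\zeta}$, which is useless, so cancellations must be exploited. The key observation is that $\psi_{n-1}$ is $\cF_{T_{n-1}}$-measurable while $Z_{n,T}$ is a stochastic integral over $[T_{n-1},T_n]$, so $\big(\mathtt{p}_{\psi_{n-1}}(Z_{n,T})\big)_n$ is a sequence of martingale differences. Using $|\mathtt{p}_\psi(\cdot)|\leq C_P\|\cdot\|_{-1,d}$ and Lemma~\ref{lem:moments Z} with $s=0$, $t=T$, each term has second moment $O(T/N)=O(1/N)$, hence the predictable quadratic variation of the partial sums is $O(n_f/N)=O(N^{-1/2})$. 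Doob's maximal inequality applied to a sufficiently high power (made available by the higher-moment bounds of Lemma~\ref{lem:moments Z}) upgrades the resulting $L^2$ estimate of order $N^{-1/4}$ to the uniform high-probability bound
\begin{equation}
\sup_{1\leq n_t\leq n_f}\Bigl|\sum_{n=1}^{n_t}\mathtt{p}_{\psi_{n-1}}(Z_{n,T})\Bigr|\, \leq\, N^{-1/4+\zeta}
\end{equation}
on an event $\Omega_2^N\subset \Omega_1^N$ with $\bP(\Omega_2^N)\to 1$. Since $\zeta<1/8$ makes both $-1/2+4\zeta$ and $-1/4+\zeta$ less than $-1/4+2\zeta$, all error contributions collapse into $O(N^{-1/4+2\zeta})$, which is precisely \eqref{eq:PsiN_t}.
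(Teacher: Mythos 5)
Your proposal follows the same route as the paper: project the mild equation \eqref{eq:mild_nu_n} onto the tangent direction via $\mathtt{p}_{\psi_{n-1}}$, identify the $D$ contribution as the drift $TN^{-1/2}b(\xi_N)$ per step, absorb the $R$ contribution into an $O(N^{-1+4\zeta})$ error using the semigroup bound, and control $\sum_n\mathtt{p}_{\psi_{n-1}}(Z_{n,T})$ by observing it is a sum of martingale differences and applying Doob's maximal inequality. Two small remarks: the paper needs only the $L^2$ version of Doob (the orthogonality of increments gives $\bE|\sum z_n|^2 = O(N^{-1/2})$, and then $\bP(\sup|\sum z_n|\geq N^{-1/4+2\zeta})\leq N^{1/2-4\zeta}\cdot O(N^{-1/2})\to 0$ directly), so the appeal to ``a sufficiently high power'' via Lemma~\ref{lem:moments Z} is heavier than required, though still valid; and your intermediate display for $\mathtt{p}_{\psi_{n-1}}(\nu_{n,T})$ carries the sign of the typo in \eqref{eq:mild_nu_n} (the correct sign is $+T\mathtt{p}_{\psi_{n-1}}(D_{\psi_{n-1}})$, matching \eqref{eq:SPDE_mild}), which your subsequent identification with $T N^{-1/2} b(\xi_N)$ silently corrects.
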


\begin{proof}[Proof of Proposition~\ref{prop:expansion_psi}]
We work for the moment on the event $\gO^N_1$ defined in the proof of Proposition~\ref{prop:bound_nu}. Using Proposition \ref{prop:bound_nu},
Lemma~\ref{lem:first order projM} below and the fact that $\psi_n=\proj_M(q_{\psi_{n-1}}+\nu_{n,T})$, we have the following first order expansion of $ \varPsi^N_t$ in \eqref{eq:PsiN_t} (recall the definition of $\mathtt{p}$ in \eqref{eq:def p psi} and note that there are $O(N^{1/2})$ terms in the sum):
\begin{equation}
  \varPsi^N_t\, :=\, \psi_0 + \sum_{n=1}^{n_t} \mathtt{p}_{\psi_{n-1}}(\nu_{n,T}) + O(N^{-1/2+4\zeta})\, .
\end{equation}
Let us now decompose the term $\mathtt{p}_{\psi_{n-1}}(\nu_{n,T})$, using the mild formulation \eqref{eq:mild_nu_n}. Remark that $\mathtt{p}_{\psi_{n-1}}(e^{tL_{\psi_{n-1}}}\nu_{n,0})=\mathtt{p}_{\psi_{n-1}}(\nu_{n,0})=0$ and that $\mathtt{p}_{\psi_{n-1}}(e^{(t-s)L_{\psi_{n-1}}}D_{\psi_{n-1}})=\mathtt{p}_{\psi_{n-1}}(D_{\psi_{n-1}})$. Note that Proposition~\ref{prop:bound_nu} shows that $\tau^{n_f}=T$ on $\gO_{ 1}^N$, so that the time integration in the mild formulation \eqref{eq:mild_nu_n} does not involve any stopping time. Hence it remains, since $D_{\psi_{n-1}}$ has no dependency in time,
\begin{equation}
 \mathtt{p}_{\psi_{n-1}}(\nu_{n,T})\, =\, T \mathtt{p}_{\psi_{n-1}}(D_{\psi_{n-1}})-\int_0^T \mathtt{p}_{\psi_{n-1}}\left(e^{(t-s)L_{\psi_{n-1}}} \partial_{ \theta}R_{\psi_{n-1}}(\nu_{n,s})\right)\dd s+ \mathtt{p}_{\psi_{n-1}}(Z_{n,T})\, .
\end{equation}
Using \eqref{ineq:L 2} and \eqref{eq:bound_R_psi_n}
\begin{align}
\left|\int_0^T \mathtt{p}_{\psi_{n-1}}\left(e^{(t-s)L_{\psi_{n-1}}} \partial_{ \theta}R_{\psi_{n-1}}(\nu_{n,s})\right)\dd s\right|&\leq\, \int_{0}^{T} \left\Vert e^{(t-s)L_{\psi_{n-1}}} \partial_{ \theta}R_{\psi_{n-1}}(\nu_{n,s}) \right\Vert_{ -1, d}\dd s \nonumber\\
&\leq\, C\int_{0}^{T} \left(1+ \frac{ 1}{ \sqrt{t-s}}\right)\left\Vert R_{\psi_{n-1}}(\nu_{n,s}) \right\Vert_{ -1, d}\dd s \nonumber\\
&\leq\, C(T+\sqrt{T})N^{-1+4\zeta}\, ,
\end{align}
which leads to
\begin{equation}\label{decom ppsinu}
 \mathtt{p}_{\psi_{n-1}}(\nu_{n,T})\, =\, T \mathtt{p}_{\psi_{n-1}}(D_{\psi_{n-1}})+ \mathtt{p}_{\psi_{n-1}}(Z_{n,T})
 +O(N^{-1+4\zeta})\, .
\end{equation}
We would like to keep only $T \mathtt{p}_{\psi_{n-1}}(D_{\psi_{n-1}})$, since the sum of these terms produce the drift we are looking for,
but unfortunately at each step $\mathtt{p}_{\psi_{n-1}}(Z_{n,T})$
has the same order as $T \mathtt{p}_{\psi_{n-1}}(D_{\psi_{n-1}})$.
To get rid of this extra term $\mathtt{p}_{\psi_{n-1}}(Z_{n,T})$, we use the fact that it is an increment of a martingale and thus averages to $0$ under summation. More precisely, denoting $z_n:=\mathtt{p}_{\psi_{n-1}}(Z_{n,T\wedge\tau^n})$ and using Doob's inequality we obtain,
\begin{equation}\label{eq:bound proba Omega 2}
 \bP \left(\sup_{1\leq m\leq n_f} \left| \sum_{1\leq n\leq m} z_n\right|\, \geq\,  N^{-1/4+2\zeta}\right)\leq N^{1/2-4\zeta}
 \bE \left( \left| \sum_{1\leq n\leq n_f}z_n\right|^2\right)\, ,
\end{equation}
and we have the following decomposition:
\begin{multline}\label{bound z}
 \bE \left( \left| \sum_{1\leq n\leq n_f}z_n\right|^2\right)\, \leq\, \bE\left(\sum_{1\leq n\leq n_f-1} \bE \left[|z_{n+1}|^2|\cF_{T_{n}}\right]\right)\\
 \leq\,C \sum_{1\leq n\leq n_f-1} \bE \left[\Vert Z_{n,T\wedge \tau^n} \Vert^2_{-1,d}\right] 
\,   \leq\, C n_f T N^{-1}\, ,
\end{multline}
where we have used \eqref{eq:Znts}. Since $n_f$ is of order $N^{1/2}$, the probability in \eqref{eq:bound proba Omega 2}
tends to $0$ when $N\rightarrow \infty$ and recalling \eqref{decom ppsinu}, we deduce that there exists an event $\gO^N_2$ satisfying $\bP(\gO^N_2)\rightarrow_{ N\to\infty} 1$ such that on $\gO^N_2$
\begin{equation}\label{exp X}
 \varPsi^N_t\, =\,\psi_0+ T \sum_{n=1}^{n_t} \mathtt{p}_{\psi_{n-1}}(D_{\psi_{n-1}})+O(N^{-1/4+2\zeta})\, .
\end{equation}
The quantity $\mathtt{p}_{\psi_{n-1}}(D_{\psi_{n-1}})=  N^{ -1/2} \mathtt{p}_{ \psi_{ n-1}} \left( -\partial_{ \theta} \left(\xi_{ N}\cdot (J\ast q_{ \psi_{ n-1}}) q_{ \psi_{ n-1}}\right)\right)$ depends linearly in $\xi_N$ and since the model is invariant by rotation, the projection does not depend on $\psi_{n-1}$. So we can write it as $N^{-1/2}b(\xi_N)$, where the linear form $b$ is given by
\begin{equation}
\label{eq:drift_b}
b( \xi)\,:=\,  \mathtt{p} \left( -\partial_{ \theta} \left(\xi\cdot (J\ast q) q\right)\right)= \mathtt{p} \left( -\partial_{ \theta} \left(q\sum_{ k=-d}^{ d}\xi^{ k} (J\ast q^{ k})\right)\right)\, .
\end{equation}
We can rewrite
\eqref{exp X} as
\begin{equation}
 \varPsi^N_t\, =\,\psi_0+ \frac{T}{N^{1/2}} \left\lfloor \frac{ N^{1/2}}{T} (t-t_0^N)\right\rfloor b(\xi_N) +O(N^{-1/4+2\zeta})\, .
\end{equation}
Since $\left| t-t^N_0-\frac{T}{N^{1/2}} \left\lfloor \frac{ N^{1/2}}{T} (t-t_0^N)\right\rfloor\right|\leq \frac{T}{N^{1/2}} $
and $b(\xi_N)=O(N^{\zeta})$, we deduce
\begin{equation}
 \varPsi^N_t\, =\,\psi_0+ b(\xi_N) (t-t_0^N) +O(N^{-1/4+2\zeta})\, ,
\end{equation}
which implies the result, since $t_0^N=O(N^{-1/2}\log N)$. Proposition~\ref{prop:expansion_psi} is proved.
\end{proof}
We can now prove the following result, which together with Proposition \ref{prop:approach_M} implies directly Theorem~\ref{th:main}:
\begin{proposition}
\label{prop:muN_qbxiN}
There exists $N$ sufficiently large such that, on the event $\gO^N_2$, 
 \begin{equation}
 \sup_{t\in [t_0^N, t_f]} \left\Vert \mu_{N,N^{1/2}t}- q_{\psi_0+ b(\xi_N)t}
 \right\Vert_{-1,d}\, =\, O(N^{-1/4+2\zeta})\, ,
\end{equation}
where the error $O(N^{-1/4+2\zeta})$ is uniform on $\gO^N_2$.
\end{proposition}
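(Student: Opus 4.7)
The proof is essentially a bookkeeping argument that combines the two previous propositions, so my plan is to set up the decomposition carefully and then read off the bound. Fix $t\in[t_0^N,t_f]$ and set $n=n_t+1$, so that $N^{1/2}t\in[T_{n-1},T_n)$. Working on the event $\gO_2^N\subset \gO_1^N$, the a priori bound of Proposition~\ref{prop:bound_nu} ensures that the empirical measure never leaves the neighborhood of radius $\sigma$ of $M$ in $H^{-1}_d$, so the stopping times $\tau^m$ from \eqref{eq:tau_n} all equal $T$, hence $n_\tau>n_f$ and the iterative scheme has not been stopped on the time horizon considered. Consequently $\mu_{N,N^{1/2}t}=q_{\psi_{n-1}}+\nu_{n,N^{1/2}t-T_{n-1}}$ with $\psi_{n-1}=\varPsi^N_t$.

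Then I would write the plain triangle inequality
\begin{equation*}
\bigl\Vert \mu_{N,N^{1/2}t}-q_{\psi_0+b(\xi_N)t}\bigr\Vert_{-1,d}\, \leq\, \bigl\Vert q_{\varPsi^N_t}-q_{\psi_0+b(\xi_N)t}\bigr\Vert_{-1,d}+\bigl\Vert \nu_{n,N^{1/2}t-T_{n-1}}\bigr\Vert_{-1,d}.
\end{equation*}
The second term is controlled by Proposition~\ref{prop:bound_nu}: it is $O(N^{-1/2+2\zeta})$, uniformly in $t$ on $\gO_1^N$. For the first term I would invoke the smoothness of the parametrization $\psi\mapsto q_{\psi,\gd}$ of the manifold $M$: since $q_{0,\gd}\in \cC^\infty(\bbT)$ and translation on $\bbT$ is a smooth operation in $H^{-1}_d$, there exists a constant $C_q>0$ (depending only on $K$ and $\gd$) such that
\begin{equation*}
\Vert q_{\psi_1,\gd}-q_{\psi_2,\gd}\Vert_{-1,d}\, \leq\, C_q |\psi_1-\psi_2|_\bbT\, ,\qquad \psi_1,\psi_2\in\bbT\, .
\end{equation*}
Applied with $\psi_1=\varPsi^N_t$ and $\psi_2=\psi_0+b(\xi_N)t$, this reduces the first term to $C_q|\varPsi^N_t-\psi_0-b(\xi_N)t|$, which is $O(N^{-1/4+2\zeta})$ on $\gO_2^N$ by the expansion \eqref{eq:PsiN_t} of Proposition~\ref{prop:expansion_psi}.

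Putting the two bounds together, the first term dominates and gives the uniform estimate $O(N^{-1/4+2\zeta})$ on $[t_0^N,t_f]$. The supremum in $t$ poses no difficulty since the right-hand sides of both Propositions~\ref{prop:bound_nu} and~\ref{prop:expansion_psi} are already uniform in time on $\gO_2^N$. There is no real obstacle here beyond noting the Lipschitz dependence of $q_{\psi,\gd}$ on $\psi$ in the $H^{-1}_d$-norm; the whole work has really been done in the preceding two propositions. Combining Proposition~\ref{prop:muN_qbxiN} with Proposition~\ref{prop:approach_M} (which provides the approach phase and the identification of $\theta_0$ with $\psi_0$ up to $\gep$) then yields \eqref{eq:main_th_drift} and completes the proof of Theorem~\ref{th:main}.
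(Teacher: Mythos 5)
Your proof is correct and follows essentially the same route as the paper: decompose $\mu_{N,N^{1/2}t}$ as $q_{\varPsi^N_t}+\nu_{n,\cdot}$, bound the $\nu$-term via Proposition~\ref{prop:bound_nu}, and convert the phase estimate of Proposition~\ref{prop:expansion_psi} into a bound on $\Vert q_{\varPsi^N_t}-q_{\psi_0+b(\xi_N)t}\Vert_{-1,d}$. The only addition is that you spell out the Lipschitz continuity of $\psi\mapsto q_{\psi,\gd}$ in $H^{-1}_d$, which the paper uses implicitly.
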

\begin{proof}[Proof of Proposition~\ref{prop:muN_qbxiN}]
We place ourselves on the event $\gO^N_2$ introduced in the proof of Proposition~\ref{prop:expansion_psi}. For each $t$ such that $N^{1/2}t\in [T_n,T_{n+1}]$ we can decompose $\mu_{N,N^{1/2}t}$ as
\begin{equation}
 \mu_{N,N^{1/2}t}\, =\, q_{\psi_n}+\nu_{n+1,N^{1/2}t-T_n}\, .
\end{equation}
But Proposition~\ref{prop:bound_nu} implies that $\nu_{n+1,N^{1/2}t-T_n}=O(N^{-1/2+2\zeta})$ and for such time $t$ we have
\begin{equation}
 q_{\psi_n}\, =\, q_{\varPsi^N_t}\, =\,  q_{\psi_0+ b(\xi_N)t} + O(N^{-1/4+2\zeta})\, ,
\end{equation}
where we have used Proposition~\ref{prop:expansion_psi}.
\end{proof}

\section{Approaching the manifold}
\label{sec:approach}
The purpose of this section is to prove Proposition~\ref{prop:approach_M}. We follow here the same ideas as in \cite{Bertini:2013aa}, Section~5. From now on, we fix $ \varepsilon_{ 0}>0$ and $ p_{ 0}\in H_{ d}^{ -1}$ such that ${\rm dist}_{ H_{ d}^{ -1}}(p_{ 0}, M)\leq \varepsilon_{ 0}$. The parameter $ \varepsilon_{ 0}$ will be chosen sufficiently small in the following. We proceed in three steps:
\begin{enumerate}
\item We rely on the convergence in finite time of the empirical measure $ \mu_{ N, t}$ to the solution $p_{t}$ of \eqref{eq:FP_Kur_finite}
starting from $p_0$ in order to show that $\mu_{N,t}$ approaches $M$ (up to a distance of order $\gep_{ 0}$). This step requires a time interval of order $\log \gep_{ 0}$.
\item We use the linear stability of $M$ under \eqref{eq:FP_Kur_finite} and control the noise terms of the dynamics to show that the empirical measure approaches $M$ up to a distance of order $N^{-1/2+2\zeta}$. This step requires a time interval of order $\log N$.
\item We show that the empirical measure stays at distance $N^{-1/2+2\zeta}$ from $M$ up to the time $t_0^N$.
\end{enumerate}

\textbf{First step.} As explained in Section \ref{sec:lin stab}, the stability of $M$ implies that if $\gep_0$ is small enough
the deterministic solution $p_t$ of the limit PDE \eqref{eq:FP_Kur_finite} with initial condition $p_0$ converges to a $q_{\theta_0}\in M$. 
In particular, after a time $s_1$, $p_t$ satisfies $\Vert p_{s_1}-q_{\theta_0}\Vert_{-1,d}\leq \gep_{ 0}$. Due to the linear stability of $M$, this time $s_1$ is of order $-\frac 1\gamma_L \log \gep_{ 0}$.

In order to show that the empirical measure is close to the deterministic trajectory $p_t$ when $N$ is large, we use a mild formulation similar to the one obtained in Section~\ref{sec:mild_formulation}, but this time relying on the $(2d+1)$-dimensional Laplacian operator $\Delta_d$. More precisely using similar argument as in Section~\ref{sec:mild_formulation}, one can obtain the following equality
in $H^{-1}_d$:
\begin{multline}\label{eq:mild_Delta}
\mu_{N,t}-p_t\, =\, e^{\frac t2\Delta_d}(\mu_{N,0}-p_0)-\int_0^t e^{\frac {t-s}{2}\Delta_d}\Bigg[\partial_\theta\left( \mu_{N,s}\otimes \go+\mu_{N,t}\sum_{k=-d}^d \gl^k_N J*\mu^k_{N,s}\right)\\
-\partial_\theta \left( p_{s}\otimes \go+p_s \sum_{k=-d}^d \gl^k J*p^k_{s}\right)\Bigg]\dd s+z_t\, ,
\end{multline}
where $z_t$ satisfies, for all test function $f= (f^{ -d}, \ldots, f^{ d})$
 \begin{equation}
  z_t(f)\, =\,\sum_{i=-d}^d \frac{\gl^i}{N^i}\sum_{j=1}^{N^i}
  \int_0^t\partial_\theta\left[ \left( e^{\frac {t-s}{2}\Delta_d}
  f\right)^i\right](\varphi^i_j(s))\dd B^i_j(s)\, .
 \end{equation}
Since $ \Delta_d$ is simply the classical one-dimensional Laplacian operator $\Delta$ on each coordinate, it is sectorial (in fact self-adjoint) with negative spectrum. Using the classical bound $\Vert e^{t\Delta} f\Vert_{-1}\leq \frac{C}{\sqrt t}\Vert f\Vert_{-2}$ for the one-dimensional Laplacian operator, we directly obtain
\begin{equation}\label{eq:smoothness_Delta}
 \Vert e^{t\Delta_d} f\Vert_{-1,d}\, \leq\,  \frac{C}{\sqrt t}
\Vert f\Vert_{-2,d}\, ,
\end{equation}
and with similar estimates as the one used in Section~\ref{sec:control_noise}, one can show that the event $B_1^N$ defined as
\begin{equation}
 B^N_1\, :=\,  \left\{\sup_{0\leq t\leq s_1} \Vert z_t\Vert_{-1,d}\,  \leq\,  \sqrt{\frac{t_1}{N}}N^\zeta\right\}
\end{equation}
satisfies $\bP(B^N_1)\rightarrow 1$ as $N\to\infty$. Let us write the shortcut
\[U_{ N, s, t}:= e^{\frac {t-s}{2}\Delta_d}\Bigg[\partial_\theta\left( \mu_{N,s}\otimes \go+\mu_{N,t}\sum_{k=-d}^d \gl^k_N J*\mu^k_{N,s}\right) -\partial_\theta \left( p_{s}\otimes \go+p_s \sum_{k=-d}^d \gl^k J*p^k_{s}\right)\Bigg]\, ,\]for the term within the integral in \eqref{eq:mild_Delta}. Note that the mapping $(\mu,\nu)\mapsto \partial_\theta(\mu J*\nu)$ satisfies (see \cite{Bertini:2013aa}, Lemma A.3 for a proof)
\begin{equation}
\label{eq:bound_partial_Jnumu}
 \Vert \partial_\theta(\mu J*\nu)\Vert_{-2}\, \leq\, C\Vert \mu\Vert_{-1}\Vert \nu\Vert_{-1}\, .
\end{equation}
Using \eqref{eq:smoothness_Delta} and \eqref{eq:bound_partial_Jnumu}, we obtain
\begin{align}\label{eq:bound_muJmu}
\left\Vert U_{ N, s, t} \right\Vert_{ -1, d} &=\, \left\Vert e^{\frac {t-s}{2}\Delta_d} \partial_\theta \left( \mu_{N,s}\sum_{k=-d}^d  \gl^k_N J*\mu^k_{N,s}\right) -\partial_\theta \left( p_{s} \sum_{k=-d}^d \gl^k J*p^k_{s}\right)\right\Vert_{-1,d} \nonumber\\
&\leq\,  \frac{ C}{ \sqrt{t-s}}\left\Vert \partial_\theta \left( \mu_{N,s}\sum_{k=-d}^d  \gl^k_N J*\mu^k_{N,s}\right) -\partial_\theta \left( p_{s} \sum_{k=-d}^d \gl^k J*p^k_{s}\right)\right\Vert_{-2,d} \nonumber\\
&\leq\,  \frac{ C}{ \sqrt{t-s}} \sum_{i=-d}^d\sum_{k=-d}^d \gl^k\left\Vert \partial_\theta \left( \mu^i_{N,s} J*\mu^k_{N,s}\right)-\partial_\theta \left( p^i_{s}J*p^k_{s}\right)\right\Vert_{-2}\\
&\quad \quad +\frac{ C}{ \sqrt{t-s}}\sum_{i=-d}^d\sum_{k=-d}^d \left\vert \gl^k_N-\gl^k \right\vert\left\Vert \partial_\theta \left( \mu^i_{N,s}J*\mu^k_{N,s}\right)\right\Vert_{-2}\\
&\leq\,  \frac{ C}{ \sqrt{t-s}} (\Vert p_s\Vert_{-1,d}+\Vert \mu_{N,s}\Vert_{-1,d}) \Vert \mu_{N,s}-p_s\Vert_{-1,d}+\frac{ C}{ \sqrt{t-s}} N^{-1/2+\zeta} \Vert \mu_{N,s}\Vert_{-1,d}^2\\
&\leq \, \frac{ C'}{ \sqrt{t-s}} \left( \Vert \mu_{N,s}-p_s\Vert_{-1,d}+ N^{-1/2+\zeta}\right)\, ,
\end{align}
where we have used in particular \eqref{eq:proba in Hminus1}, since both $p_{ s}$ and $ \mu_{ N, s}$ are probabilities. Let us place ourselves on the event
\begin{equation}
\label{eq:B2N}
 B^N_2\, :=\, \left\lbrace \Vert \mu_{N,0}-p_0\Vert_{-1,d}\leq \frac{ \varepsilon_{ 0}}{ 2} \right\rbrace \cap B^N_1\, ,
\end{equation} 
which satisfies obviously $\bP(B_{ 2}^{ N})\to 1$ as $N\to \infty$. Then, for all $t\leq s_1$, \eqref{eq:smoothness_Delta} and \eqref{eq:bound_muJmu} imply that \eqref{eq:mild_Delta} can be rewritten on the event $B^N_2$ as
\begin{equation}
 \Vert \mu_{N,t}-p_t\Vert_{-1,d}\, \leq\, \frac{ \varepsilon_{ 0}}{ 2} +C\sqrt{\frac{s_1}{N}}N^\zeta +C \int_0^t \frac{1}{\sqrt{t-s}}\Vert \mu_{N,s}-p_s\Vert_{-1,d}\dd s\, ,
\end{equation}
so applying the Gronwall-Henry inequality (\cite{Henry1981}, Lemma~7.1.1 and Exercise 1), one obtains that for some $a>0$ (independent from $N$ and $\gep_{ 0}$), on the event $B^N_2$ and for all $t\leq s_1$ 
\begin{equation}
 \Vert \mu_{N,t}-p_t\Vert_{-1,d}\, \leq\, 2\left( \frac{ \varepsilon_{ 0}}{ 2} +C\sqrt{\frac{s_1}{N}}N^\zeta\right)e^{a s_1}\, .
\end{equation}
We deduce that for $N$ large enough, the projection \[\psi_0^1:=\proj_M(\mu_{N,s_1} )\] is well defined and $\Vert \mu_{N,s_1}-p_{s_1}\Vert_{-1,d}\leq \varepsilon_{ 0}$ on $B^N_2$, which means that $|\psi_0^1-\theta_0|\leq C\gep_{ 0} $ and $ \left\Vert \mu_{ N, s_{ 1}} - q_{ \theta_{ 0}}\right\Vert\leq 2 \varepsilon_{ 0}$.

\medskip

\textbf{Second step.} 
Now that we know that $\text{dist}(\mu_{N,s_1},M)\leq 2\gep_{ 0}$ with increasing probability as $N\to\infty$, we can use a similar scheme as the one defined in Section~\ref{subsec:dyn M} to show that the empirical measure approaches $M$ up to a distance $N^{-1/2+2\zeta}$ with high probability. Since this part is very similar to the work done in Section~\ref{sec:dynamics_manifold}, we do not specify all the details.

We consider the evolution of the dynamics on time intervals $[\tilde T_n,\tilde T_{n+1}]$ with $\tilde T_n=s_1+n\tilde T$ where $\tilde T$ is such that $e^{-\gamma_L \tilde T}\leq \frac{1}{4 C_L C_P}$. We consider also a sequence of real numbers $h_n$ satisfying $h_1=2\gep_{ 0}$ and $h_{n+1}=\frac{h_{n}}{2}$ and take this time the number of step $\tilde n_f$ of our scheme as
\begin{equation}
 \tilde n_f\, :=\, \inf\left\{n:\, h_n\leq N^{-1/2+2\zeta}\right\}\, .
\end{equation}
It is clear that $\tilde n_f$ is of order $O(\log N)$. To ensure the existence of the projections of the process on $M$ at each step, we introduce, as in
Section~\ref{subsec:dyn M}, the stopping time
\begin{equation}
 (\tilde n_{\tau},\tilde \tau)\, =\, \inf\{(n,t)\in \{1,\ldots,\tilde n_f\}\times [0,\tilde T] :\, 
 \Vert \mu_{\tilde T_{n-1}+t}-q_{\ga_{n-1}}\Vert_{-1,d}\geq \gs\}\, ,
\end{equation}
where $\ga_{n} = \proj_M(\mu_{\tilde T_{n}})$ when it exists.
This allows us to define
the random phases 
$\tilde \psi_{n-1}$ defined as
\begin{equation}
 \tilde \psi_{n-1}\, :=\, \proj_M(\mu_{(n\wedge \tilde n_{\tau}-1)\tilde T+t\wedge \tilde \tau^n})\, ,
\end{equation}
and the processes $\tilde \nu_{n,t}$
defined for $n=1,\ldots,\tilde n_f$ as
\begin{equation}
 \tilde \nu_{n,t}\, :=\, \mu_{(n\wedge \tilde n_\tau-1)\tilde T+t\wedge \tilde \tau^n}-q_{\tilde \psi_{n-1}}\, .
\end{equation}
This last process satisfies the mild equation
\begin{equation}\label{eq:mild tilde nu}
 \tilde \nu_{n,t}\, =\, e^{(t\wedge \tilde \tau^n) L_{{\tilde \psi_{n-1}}}}\tilde \nu_{n,0}-\int_0^{t\wedge\tilde \tau^n}
 e^{(t\wedge\tilde \tau^n-s)L_{{\tilde \psi_{n-1}}}}(D_{\tilde \psi_{n-1}}
 +R_{\tilde \psi_{n-1}}(\tilde \nu_{n,s}))\dd s + \tilde Z_{n,t\wedge \tilde \tau^n}\, ,
\end{equation}
where $\tilde Z_{n,t}$ is defined as
\begin{equation}
 \tilde Z_{n,t}(f)\, =\, \sum_{i=-d}^d 
 \frac{1}{N^i}\sum_{j=1}^{N^i} \int_{0}^{t} \partial_\theta\left[\left(
 e^{(t-s)L_{\tilde \psi_{n-1}}^*}f\right)^i\right](\varphi^i_j(\tilde T_{n-1}+s))\dd B^i_j(\tilde T_{n-1}+s)\, .
\end{equation}
Section~\ref{sec:control_noise} shows that the event
\begin{equation}
 \tilde A^N\, =\, \left\{ \sup_{1\leq n\leq \tilde  n_f}\sup_{t\in [0,\tilde T]} \Vert \tilde Z_{n,t}\Vert_{-1,d} \leq   \tilde T^{1/2}N^{-1/2+\zeta}\right\}\, ,
\end{equation}
satisfies $\bP(\tilde A^N)\rightarrow 1$ as $N\to\infty$.

\medskip

In the first step of this proof we have shown, since $\tilde \psi_0=\psi_0^1$, that, on the event $B^N_2$ (recall \eqref{eq:B2N}), we have 
$\Vert \tilde \nu_{1,0}\Vert_{-1,d}=\Vert \mu_{N,s_1}-q_{\psi_0^1}\Vert_{-1,d}\leq h_1$.
Our aim is to prove that on the event $B^N_3$ defined as
\begin{equation}
 B^N_3\, :=\, \tilde A^N\cap B^N_2\, ,
\end{equation}
we have $\Vert \tilde \nu_{n,0}\Vert_{-1,d}\leq h_n$
for all $n=1,\ldots,\tilde n_f$. This would imply, using the notations $s_2= \tilde T_{n_f}$ and $\psi_0^2=\proj_M(\mu_{N,s_2})$, that
$\Vert \tilde \mu_{N,s_2}-q_{\psi_0^2} \Vert_{-1,d}\leq N^{-1/2+2\zeta}$.
We place ourselves on the event $B^N_3$.
From the mild formulation \eqref{eq:mild tilde nu}, if $n<\tilde n_f$ and
$\Vert \tilde \nu_{n,0}\Vert_{-1,d}\leq h_n$ we get
\begin{multline}
 \Vert \tilde \nu_{n,t}\Vert_{-1,d}\, \leq\, C_L e^{-\gamma_L t\wedge \tilde \tau^{ n}}h_{n}\\
 +2C_L\tilde T\Vert D_{\tilde \psi_{n-1}}\Vert_{-1,d}
 +C_{L} \left(\tilde T + 2 \tilde T^{ 1/2}\right)\sup_{0\leq s\leq t}\Vert R_{\tilde \psi_{n-1}}(\tilde \nu_{n,s})\Vert_{-1,d}\\
 +\tilde T^{1/2}N^{-1/2+\zeta}\, .
\end{multline}
Consider the time $\tilde t^*$ defined as 
\begin{equation}
 \tilde t^* \, :=\, \inf\left\{t\in [0,\tilde T]:\, \Vert \tilde  \nu_{n,t}\Vert_{-1,d}\geq 2C_L h_n\right\}\, .
\end{equation}
For all $t\leq \tilde t^*$ we have
\begin{multline}
 \sup_{0\leq s\leq t}\Vert R_{\tilde \psi_{n-1}}(\tilde \nu_{n,s})\Vert_{-1,d}\\ \leq\, 
 C\bigg(\sup_{0\leq s\leq t}\Vert \tilde \nu_{n,s}\Vert_{-1,d}^2 
 + N^{-1/2} \max_{k=-d,\ldots,d} |\xi_{ N}^k|
 \sup_{0\leq s\leq t}\Vert \tilde \nu_{n,s}\Vert_{-1,d}\bigg)\\
 \leq\, C(C_L^2h_n^2+C_LN^{-1/2+\zeta}h_n)\, .
\end{multline}
The last quantity is smaller than $C(N, \varepsilon_{ 0}) h_{ n}$, where $C(N, \varepsilon_{ 0})\to 0$ as $N\to\infty$ and $ \varepsilon_{ 0}\to 0$. On the other hand we have shown in \eqref{eq:bound_D} that $\Vert D_{\tilde \psi_{n-1}}\Vert_{-1,d}\leq C N^{-1/2+\zeta}$.
Since $n<\tilde n_f$ we have $h_N > \frac{1}{2C_L}N^{-1/2+2\zeta}$, which means
that $N^{-1/2+\zeta}$ is negligible with respect to $h_n$ for $N$ large enough.
So for $N$ large enough, $\tilde t^*\geq \tilde T$
and we have (recall that $e^{-\gl \tilde T}\leq \frac{1}{4C_LC_P}$)
\begin{equation}
\Vert \tilde \nu_{n,\tilde T}\Vert_{-1,d}\, \leq\, \frac{1}{4 C_P} h_n + o(h_n)\,  \leq\, \frac{3}{8 C_P} h_n \, ,
\end{equation}
when $\gep_{ 0}$ is small enough. It remains to show that $\Vert \tilde \nu_{n+1,0}\Vert_{-1,d}\leq \frac{h_n}{2}$ to conclude the recursion.
We do not prove it in details, since it can be done by proceeding exactly as in the proof of Proposition~\ref{prop:bound_nu}, decomposing $\Vert \tilde \nu_{n,\tilde T}\Vert_{-1,d}$ and
showing that it can be written as
\begin{equation}
 \Vert \tilde \nu_{n+1,0}\Vert_{-1,d}\, \leq\, \Vert P^s_{\tilde \psi_{n-1}}\tilde \nu_{n,\tilde T}\Vert_{-1,d}+ O(h_n^2)
 \, ,
\end{equation}
which implies that $ \Vert \tilde \nu_{n+1,0}\Vert_{-1,d}\leq \frac{3}{8}h_n+ O(h_n^2)\leq \frac{h_n}{2}$
on the event $B^N_3$ when $\gep_{ 0}$ is small enough and concludes the recursion. Note that the estimate for $\tilde \psi_n-\tilde \psi_{n-1}$ obtained 
in \eqref{eq:bound_delta_psi} leads to
\begin{equation}
 |\psi_0^2-\psi_0^1|\, \leq\, \sum_{n=1}^{n_f} |\tilde \psi_n-\tilde \psi_{n-1}|\, \leq \, C\sum_{n=1}^{n_f} h_n
 \, \leq\, 2C h_1\leq  4C\gep_{ 0}\, ,
\end{equation}
on the event $B^N_2$, which gives $ \left\vert \psi_0^2-\theta_0 \right\vert\leq C'\gep_{ 0}$ for some $C'$.

\medskip

\textbf{Third step.} 
In the previous step, we have constructed a time $s_2$
such that $s_2\leq -\frac 1\gl \log \gep_{ 0}+C_1\log N $ for some constant $C_1$ and such that
$\Vert \tilde \mu_{N,s_2}-q_{\psi_0^2} \Vert_{-1,d}\leq N^{-1/2+2\zeta}$
with high probability. We can now consider a time $s_3=c\log N$ for $c=C_1+1$, which does not depend in $\gep_{ 0}$. For $N$ large enough, we obviously have $s_3>s_2$. In order to prove that $\Vert \tilde \mu_{N,s_3}-q_{\psi_0^3} \Vert_{-1,d}\leq N^{-1/2+2\zeta}$ with high probability, where $\psi_0^3=\proj_M( \mu_{N,s_3})$, it suffices to decompose the dynamics on the interval $[s_2,s_3]$ according to an iterative scheme with time step $\hat T$ satisfying $e^{-\gamma_L \hat T}\leq \frac{1}{4 C_L C_P}$ as does $T$ and apply exactly the same procedure as in Proposition \ref{prop:bound_nu}. 

This last step induces a phase shift $|\psi^3_0-\psi^2_0|\leq C  N^{-1/2+2\zeta}\log N\leq C\gep_{ 0}$, for $N$ large enough. This concludes the proof, with $t_0^N=N^{-1/2}s_3$.

\section{Estimates on the drift $ b$}
\subsection{The case of a symmetric disorder}
\label{sec:drift_symmetric}
We prove here Proposition~\ref{prop:drift_symmetric} and drop for simplicity the dependency in $ \psi$ and $ \delta$. We consider $ \xi= ( \xi^{ -d}, \ldots, \xi^{ d})$ such that $ \xi^{ -i}= \xi^{ i}$ for all $i=1, \ldots, d$ and aim at proving that $ b(\xi)=0$, where the drift $b(\xi) = \mathtt{p} \left( - \partial_{ \theta} \left( \left\lbrace \sum_{ k=-d} ^{ d} \xi^{ k} (J\ast q^{ k}) \right\rbrace q\right)\right)$ is given by \eqref{eq:drift_b}.  

The space of regular ($\cC^{ 2}$, say) test functions $f= (f^{ -d}, f^{ -(d-1)}, \ldots, f^{ d-1}, f^{ d})$ can be naturally decomposed into the direct sum of the space $\cO$ (resp. $\cE$) of odd (resp. even) test function in both variables $( \theta, i)$, that is $f\in \cO$ (resp. $f\in\cE$) if and only if $ f^{ -i}(- \theta)= - f^{ i}( \theta)$ (resp. $ f^{ -i}( - \theta) = f^{ i} (\theta)$) for all $ \theta\in \bbT$ and $i=0, \ldots, d$. One easily sees from the definition of $J(\cdot)$ in \eqref{eq:def_J} and the definition of $q$ in \eqref{eq:def_q} that $q \in \cE$ and $ \left((J\ast q^{ -d}), \ldots, (J\ast q^{ d})\right)\in \cO$. Let us denote $Q(\theta):= \sum_{ k=-d} ^{ d} \xi^{ k} (J\ast q^{ k})(\theta)$. Using that $ \xi^{ -i}= \xi^{ i}$, one obtains that $Q( \theta) = \xi^{ 0}(J\ast q^{ 0})(\theta) + \sum_{ k=1}^{ d} \xi^{ k}\left((J\ast q^{ k})(\theta) + (J\ast q^{ -k})(\theta)\right)$, so that we deduce that $Q$ is an odd function of $ \theta$ and that $ \theta \mapsto Q(\theta) q(\theta)\in \
cO$. Consequently 
$ 
\partial_{ \theta} \left(Q(\theta) q(\theta)\right)\in \cE$. Hence, in order to prove Proposition~\ref{prop:drift_symmetric}, it suffices to prove that
\begin{equation}
\forall h\in\cE,\ \mathtt{p}(h)=0.
\end{equation}
This is indeed the case since one easily sees from the definition \eqref{eq:def_Lpsiq} of the operator $L=L_{ \psi, \delta}$ that $L(\cE)\subset \cE$ and $L(\cO)\subset \cO$ and since $\mathtt{p}$ is the projection on the eigenfunction $ \partial_{ \theta} q \in \cO$. Proposition~\ref{prop:drift_symmetric} is proved.
\subsection{Small $ \delta$ asymptotics of the drift}
\label{sec:asymptotic_drift}
Our aim here is to prove Proposition~\ref{prop:xpansion v} that gives the first order expansion of the drift $b(\xi)$ defined in \eqref{eq:drift_b} as $ \delta\to 0$. Due to the rotational invariance of the system, we can work with the stationary solution $q_{0,\gd}$ that we denote $q_\gd$ throughout this section. We denote $\mathtt{p}_\gd$ as $\mathtt{p}_{ \psi=0, \delta}$ (recall \eqref{eq:def p psi}) and $D_\gd(\xi)$ as $D_{N,0,\gd}$, (recall \eqref{eq:D}). With these notations the drift $b$ is given by
\begin{equation}
 b(\xi)\, =\, \mathtt{p}_\gd(D_\gd(\xi))\, .
\end{equation}
When $ \delta=0$, it is straightforward to see that $ q_{ \delta}= (q^{ -d}_{ \delta}, \ldots, q^{ d}_{ \delta})$ is equal to $(q_{ 0}, \ldots, q_{ 0})$, where $q_{ 0}$ is the stationary solution of the nonlinear Fokker-Planck equation without disorder \eqref{eq:FP_without_disorder}. We refer to Section~\ref{sec:case_delta_0} below for precise definitions (see in particular \eqref{eq:q_0_no_des} and \eqref{eq:fixed point without disorder} where the normalisation factor $\cZ_{ 0}$ and the fixed-point parameter $r_{ 0}$ are defined). The following result (proved in Appendix \ref{sec:appendix expansion delta}) provides the next order of the approximation of $q_{ \delta}$ as $ \delta\to 0$.
\begin{lemma}\label{lem:exp q}
For $i=-d,\ldots,d$ we have
 \begin{equation}
 \label{eq:expansion_q_delta}
 q^i_\gd(\theta)\, =\, q_0(\theta) + \gd\go^i \kappa(\theta) q_0(\theta)+O(\gd^2)\, ,
\end{equation}
where
\begin{multline}
 \kappa(\theta)\, =\,2\theta
 +4\pi\frac{\int_\theta^{2\pi}e^{-2Kr_0\cos u}\dd u}{\cZ_0}-2\frac{\int_0^{2\pi} e^{2Kr_0\cos u}u\dd u}{\cZ_0}\\
 -4\pi  \frac{\int_0^{2\pi}e^{2Kr_0\cos u}\int_u^{2\pi} e^{-2Kr_0\cos v}\dd v\dd u}{\cZ_0^2}\, ,
\end{multline}
and where the error $O(\delta^{ 2})$ is uniform in $ \theta\in\bbT$.
\end{lemma}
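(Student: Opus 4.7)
The plan is to attack the expansion directly from the explicit representations \eqref{eq:def_q}--\eqref{eq:fixed_point_Psi}. The quantity $q^i_\gd(\theta)$ depends on $\gd$ both explicitly, through the factors $e^{2\gd\go^i\theta}$, $e^{4\pi\gd\go^i}$ and $e^{-2\gd\go^i u}$ appearing in $S^i_\gd$ and $Z^i_\gd$, and implicitly through the fixed-point radius $r_\gd$. My first step is to show that the implicit dependence contributes only at order $\gd^2$, i.e.\ $r_\gd=r_0+O(\gd^2)$. Here the symmetry of $\lambda$ enters in a decisive way: a preliminary first-order expansion of $S^k_\gd/Z^k_\gd$ in $\gd$ produces a correction linear in $\go^k$, so after averaging against $\lambda^k$ in \eqref{eq:fixed_point_Psi} the first-order part of $\Psi_\gd(x)$ is proportional to $\sum_k\lambda^k\go^k=0$. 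Implicitly differentiating $r=\Psi_\gd(2Kr)$ at $\gd=0$ then yields $\partial_\gd r_\gd|_{\gd=0}=0$, and the smoothness of $r_\gd$ (from the implicit function theorem applied to the fixed-point equation) upgrades this to $r_\gd=r_0+O(\gd^2)$. Consequently, at first order in $\gd$, one may evaluate $S^i_\gd$ and $Z^i_\gd$ at $x=2Kr_0$ and forget the implicit dependence altogether.

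Next, I would carry out the Taylor expansions of the four explicit exponentials inside \eqref{eq:S_delta}, keeping only the linear term in $\gd$. Grouping pieces in the bracket and exploiting the identity $\int_0^{2\pi}e^{-2Kr_0\cos u}\,du=\int_0^{2\pi}e^{2Kr_0\cos u}\,du=\cZ_0$ (via $u\mapsto u+\pi$) one obtains
\begin{equation}
S^i_\gd(\theta,2Kr_0)\,=\,\cZ_0\,e^{2Kr_0\cos\theta}\bigl(1+\gd\go^i A(\theta)\bigr)+O(\gd^2),
\end{equation}
with $A(\theta)=2\theta+\frac{4\pi\int_\theta^{2\pi}e^{-2Kr_0\cos u}du}{\cZ_0}-\frac{2\int_0^{2\pi}e^{-2Kr_0\cos u}u\,du}{\cZ_0}$. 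Integrating in $\theta$ gives $Z^i_\gd(2Kr_0)=\cZ_0^2\bigl(1+\gd\go^i B\bigr)+O(\gd^2)$ with $B=\cZ_0^{-1}\int_0^{2\pi}e^{2Kr_0\cos\theta}A(\theta)\,d\theta$. Dividing and using the geometric series at order one then yields
\begin{equation}
q^i_\gd(\theta)\,=\,q_0(\theta)\bigl(1+\gd\go^i(A(\theta)-B)\bigr)+O(\gd^2),
\end{equation}
so that $\kappa(\theta)=A(\theta)-B$, with the $O(\gd^2)$ remainder uniform in $\theta$ because every integrand is smooth and bounded.

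It remains to recognise $A(\theta)-B$ as the explicit expression stated in the lemma. For this I would exploit the reflection $u\mapsto 2\pi-u$, under which $e^{\pm 2Kr_0\cos u}$ is invariant: the resulting identity $\int_0^{2\pi}e^{-2Kr_0\cos u}u\,du=\pi\cZ_0=\int_0^{2\pi}e^{2Kr_0\cos u}u\,du$ lets me rewrite the last term of $A(\theta)$ in the form displayed in the statement, and it also makes the $2\theta$ and $-2\pi$ pieces of $A(\theta)$ cancel when computing $B$, leaving exactly $B=\frac{4\pi}{\cZ_0^2}\int_0^{2\pi}e^{2Kr_0\cos\theta}\int_\theta^{2\pi}e^{-2Kr_0\cos v}dv\,d\theta$. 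The main obstacle in the whole argument is not analytic but combinatorial: keeping careful track of signs in the four first-order expansions, and using the two parity identities ($u\mapsto u+\pi$ and $u\mapsto 2\pi-u$) to reconcile the raw expansion with the more symmetric form of $\kappa(\theta)$ printed in the lemma.
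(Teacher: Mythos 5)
Your proposal is correct and follows essentially the same route as the paper's own proof: first one establishes $r_\gd = r_0 + O(\gd^2)$ (the paper's Lemma~\ref{lem:derivative_r}) by observing that the first-order correction to $\Psi_\gd$ is proportional to $\sum_k \lambda^k \go^k = 0$, and then one expands $S^i_\gd$ and $Z^i_\gd$ to first order in $\gd$ at $x=2Kr_0$ and forms the ratio. The cleanup you do via the two parity substitutions $u\mapsto u+\pi$ and $u\mapsto 2\pi-u$ reproduces exactly the simplifications the paper carries out when deriving \eqref{aux:derivative_S_delta}--\eqref{aux:derivative_Z_delta}.
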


The projection $\mathtt{p}_\gd$ also converges in some sense to the projection $\mathtt{p}_0$ on the tangent space of the stable circle of stationary profiles
of \eqref{eq:FP_without_disorder} at $q_0$. 
Moreover, the system given by \eqref{eq:FP_without_disorder} admits a nice Hilbertian structure, which allows to know
$\mathtt{p}_0$ explicitly. This allows us to obtain the following first order expansion of $\mathtt{p}_\gd$, whose proof is given in Appendix~\ref{sec:appendix expansion delta}.
\begin{lemma}\label{lem:exp proj p}For all coordinate by coordinate primitive $(\cU^{-d},\ldots, \cU^d)$ of $u$ smooth, we have
 \begin{equation}
 \label{eq:expand_p_delta}
  \mathtt{p}_\gd(u)\, =\, \frac{\cZ_0^2}{\cZ_0^2-4\pi^2}\sum_{k=-d}^d \gl^k \int_\bbT \left( 1-\frac{2\pi}{\cZ_0^2 q_0}  \right)\cU^k + O(\gd\Vert u\Vert_{-1,d})\, .
 \end{equation}
\end{lemma}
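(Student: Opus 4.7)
My plan is to realize $\mathtt{p}_\gd$ as a normalized pairing against a left eigenvector of $L_{0,\gd}^*$ spanning its kernel, compute this eigenvector explicitly at $\gd=0$, and absorb the $\gd$-correction via analytic perturbation theory into the error $O(\gd\|u\|_{-1,d})$. Since $0$ is a simple isolated eigenvalue of $L_{0,\gd}$ on $H^{-1}_d$ by the spectral analysis of Appendix~\ref{sec:appendix_spectral_semigroups}, standard Riesz projection theory gives a vector $\phi_\gd=(\phi_\gd^{-d},\ldots,\phi_\gd^d)$ spanning $\Ker(L_{0,\gd}^*)$ such that
\[
\mathtt{p}_\gd(u) \,=\, \frac{\sum_k \gl^k\int_\bbT u^k \phi_\gd^k}{\sum_k \gl^k\int_\bbT (\partial_\theta q_\gd^k)\phi_\gd^k}.
\]

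To identify $\phi_0$, I decompose $L_{0,0}$ as the sum of the diagonal part $\cA v := \tfrac12\partial_\theta^2 v - \partial_\theta(v\, J*q_0)$ and the rank-one coupling $u\mapsto -\partial_\theta\bigl(q_0 \sum_k \gl^k J*u^k\bigr)$. A short integration by parts shows that the system $L_{0,0}^*\phi=0$ reduces to $\cA^*\phi^i = J*(q_0\partial_\theta\phi_s)$ with $\phi_s := \sum_k \gl^k\phi^k$ and $\cA^*\psi = \tfrac12\partial_\theta^2\psi + (J*q_0)\partial_\theta\psi$. The right-hand side being independent of $i$ and $\cA^*$ having trivial kernel on zero-mean functions (since $\cA$ is symmetric in the weighted pairing $\langle\cdot,\cdot\rangle_{1/q_0}$) force all the $\phi^i$ to equal a common scalar $\phi$ solving $\cA^*\phi = J*(q_0\partial_\theta\phi)$. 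Using the stationarity identity $J*q_0 = \tfrac12\partial_\theta \log q_0$, I will verify by direct computation that a primitive of $-(1 - \tfrac{2\pi}{\cZ_0^2 q_0})$ solves this equation; equivalently, $-\partial_\theta\phi = 1 - \tfrac{2\pi}{\cZ_0^2 q_0}$.

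I then rewrite the pairings in the form of the statement. Integration by parts gives $\int_\bbT \phi\, u^k = -\int_\bbT \partial_\theta\phi\cdot \cU^k = \int_\bbT(1 - \tfrac{2\pi}{\cZ_0^2 q_0})\cU^k$, independently of the choice of primitive since $\int_\bbT (1-\tfrac{2\pi}{\cZ_0^2 q_0})\,\dd\theta =0$; the latter follows from the key identity $\int_\bbT q_0^{-1}\,\dd\theta = \cZ_0^2$, itself immediate from $q_0(\theta) = e^{2Kr_0\cos\theta}/\cZ_0$ combined with the substitution $\theta\mapsto \theta+\pi$. Applying this same identity with $u = \partial_\theta q_\gd|_{\gd=0}$ (so that $\cU^k=q_0$ for every $k$) yields $\sum_k \gl^k\int_\bbT(1-\tfrac{2\pi}{\cZ_0^2 q_0})q_0 = 1 - \tfrac{4\pi^2}{\cZ_0^2}$, producing the normalizing prefactor $\cZ_0^2/(\cZ_0^2 - 4\pi^2)$.

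It remains to handle the $O(\gd\|u\|_{-1,d})$ remainder. Since $L_{0,\gd}$ depends smoothly on $\gd$ through $q_\gd$ (Lemma~\ref{lem:exp q}) and through the drift $\gd\go^i\partial_\theta$, and its spectral gap is stable for small $\gd$ by Appendix~\ref{sec:appendix_spectral_semigroups}, classical analytic perturbation of isolated eigenprojectors gives that $\gd\mapsto \mathtt{p}_\gd$ is Lipschitz as an element of the dual of $H^{-1}_d$, and the claim follows. The main obstacle is to control this perturbation uniformly in the $H^{-1}_d$-norm in the non-self-adjoint setting; this is handled by working in the rigged-space framework of Appendix~\ref{sec:appendix_rigged_spaces} and by applying to $L_{0,\gd}^*$ the sectorial estimates developed there in parallel to those for $L_{0,\gd}$.
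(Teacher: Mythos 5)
Your proposal is correct and lands on the same final formula by essentially the same mechanism (pairing $u$ against the spanning vector of the zero eigenspace, plus analytic perturbation theory for the $O(\gd)$ remainder), but the route through the zero-mode differs from the paper's in a way worth flagging. The paper exploits the self-adjointness of $A=L_{0,0}$ in the weighted space $H^{-1}_{1/q_0,d}$ (Proposition~\ref{prop:A_self_adjoint}) to write $\mathtt{p}_0(u)=\langle \partial_\theta q_{0,nd},u\rangle_{-1,1/q_0,d}/\Vert\partial_\theta q_{0,nd}\Vert^2_{-1,1/q_0,d}$ and then computes the two weighted integrals directly; no ODE ever has to be solved. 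You instead work in the unweighted $\bL^2_{0,d}$ pairing, pass to the $\bL^2$-adjoint kernel $\Ker(A^*)$, reduce the coupled system to a scalar ODE via the ansatz $\phi^{-d}=\cdots=\phi^d$, and verify by hand that $-\partial_\theta\phi = 1-\tfrac{2\pi}{\cZ_0^2 q_0}$ solves it. The two computations are linked by the isometry $U f=-\partial_\theta(q_0\partial_\theta f)$ used in the proof of Proposition~\ref{prop:A_ast_sectorial 2}: your $\phi_0$ is precisely $U^{-1}\partial_\theta q_{0,nd}$. The paper's approach is thus a bit shorter because it offloads the ODE-solving to the explicit form of the weighted $H^{-1}$ inner product, and it avoids the small intermediate step you need (the claim that the scalar diagonal operator $\cA$ has trivial kernel on zero-mean functions, which you assert via symmetry but do not prove; it is a standard spectral-gap fact for the one-dimensional reversible diffusion with invariant density $q_0$, and is covered by Proposition~\ref{prop:A_self_adjoint} once one notes the kernel of the full coupled $A$ is one-dimensional). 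Your treatment of the remainder term and the key identities $J*q_0=\tfrac12\partial_\theta\log q_0$ and $\int_\bbT q_0^{-1}=\cZ_0^2$ agree with the paper's and are correct.
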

We have now the tools required to obtain the first order expansion of the drift $b(\xi)$. The result we want to prove is
\begin{proposition}\label{prop:exp b}For all $\xi$ such that $\sum_{k=-d}^d \xi^k=0$ we have
\begin{equation}
 b(\xi)\, =\, \gd \sum_{k=-d}^d \xi^k \go^k +O(\gd^2)\, .
\end{equation}
\end{proposition}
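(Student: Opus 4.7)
The plan is to Taylor-expand both factors in $b(\xi) = \mathtt{p}_\delta\bigl(D_\delta(\xi)\bigr)$ in $\delta$, using Lemma \ref{lem:exp q} for $q_\delta$ and Lemma \ref{lem:exp proj p} for $\mathtt{p}_\delta$, and to exploit the hypothesis $\sum_k \xi^k = 0$ to cancel what would otherwise be an order-one contribution. The key structural observation is that the constraint $\sum_k \xi^k = 0$ promotes the expansion of $D_\delta(\xi)$ by one order in $\delta$, so that the overall result comes out of order $\delta$ rather than order $1$.

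By linearity of $J\ast\cdot$ and the expansion $q_\delta^k = q_0 + \delta\omega^k\kappa q_0 + O(\delta^2)$ from Lemma \ref{lem:exp q},
\[
\sum_{k=-d}^d \xi^k\,(J\ast q_\delta^k) \;=\; \Bigl(\sum_k \xi^k\Bigr)\, J\ast q_0 \;+\; \delta\Bigl(\sum_k \xi^k\omega^k\Bigr)\, J\ast(\kappa q_0) \;+\; O(\delta^2),
\]
and the first sum vanishes by assumption. Expanding also the outer factor $q_\delta^i = q_0 + O(\delta)$ gives
\[
D_\delta(\xi)^i \;=\; -\delta\Bigl(\sum_k \xi^k\omega^k\Bigr)\,\partial_\theta\bigl(q_0\cdot J\ast(\kappa q_0)\bigr) \;+\; O(\delta^2),
\]
an expression independent of the coordinate $i$ at leading order and whose $H^{-1}_d$-norm is $O(\delta)$.

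Next, applying Lemma \ref{lem:exp proj p} with $u = D_\delta(\xi)$ turns the error $O(\delta\|u\|_{-1,d})$ into $O(\delta^2)$. One may choose the primitive $\mathcal{U}^k = -\delta(\sum_j \xi^j\omega^j)\,q_0\,J\ast(\kappa q_0)$ independently of $k$. Using $\sum_k\lambda^k = 1$ and the fact that $\int_\bbT J\ast(\kappa q_0)\,d\theta = 0$ (since $J$ has zero mean), the $2\pi/(\mathcal{Z}_0^2 q_0)$ term drops out, leaving
\[
b(\xi) \;=\; -\delta\Bigl(\sum_j \xi^j\omega^j\Bigr)\,\frac{\mathcal{Z}_0^2}{\mathcal{Z}_0^2 - 4\pi^2}\,\mathcal{I} \;+\; O(\delta^2),\qquad
\mathcal{I} \;:=\; \int_\bbT q_0\cdot J\ast(\kappa q_0)\,d\theta.
\]
The proposition then reduces to the scalar identity $\mathcal{I}\cdot\mathcal{Z}_0^2/(\mathcal{Z}_0^2 - 4\pi^2) = -1$. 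With $J(\theta) = -K\sin\theta$, the sine addition formula together with the symmetries $\int q_0\sin\theta\,d\theta = 0$ and $\int q_0\cos\theta\,d\theta = r_0$ collapses the double integral to $\mathcal{I} = K r_0\int_0^{2\pi}\sin u\,\kappa(u)\,q_0(u)\,du$; the two $\theta$-independent summands of $\kappa$ disappear by the same odd/even symmetry. For the two surviving pieces, the identity $\sin u\,e^{2Kr_0\cos u} = -(2Kr_0)^{-1}\partial_u e^{2Kr_0\cos u}$ turns each of them into an integration by parts, and together with $\int_0^{2\pi}e^{-2Kr_0\cos u}\,du = \mathcal{Z}_0$ (by the change of variables $u\mapsto u+\pi$) the boundary terms telescope to produce exactly the factor $\mathcal{Z}_0^2 - 4\pi^2$ needed to cancel the denominator.

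The main obstacle is precisely this last step. Every individual manipulation is routine, but the algebraic bookkeeping of the four summands of $\kappa$ recombining, via several integrations by parts and the pairing identity above, into the factor $\mathcal{Z}_0^2 - 4\pi^2$, is delicate; at its root it reflects the disorderless fixed-point equation $r_0 = \Psi_0(2Kr_0)$ satisfied by the parameter $r_0$ entering $q_0$. Everything else in the argument is a clean two-step substitution of the expansions provided by Lemmas \ref{lem:exp q} and \ref{lem:exp proj p} combined with the cancellation enforced by $\sum_k\xi^k = 0$.
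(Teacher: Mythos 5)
Your expansion strategy is exactly the one the paper follows: expand $D_\gd(\xi)$ via Lemma~\ref{lem:exp q} and use $\sum_k\xi^k=0$ to kill the $O(1)$ term, apply Lemma~\ref{lem:exp proj p} (observing that its error becomes $O(\gd^2)$ because $\Vert D_\gd(\xi)\Vert_{-1,d}=O(\gd)$), drop the $2\pi/(\cZ_0^2 q_0)$ piece since $\int_\bbT J*(\kappa q_0)=0$, reduce via the sine addition formula and the symmetries $\int q_0\sin=0$, $\int q_0\cos\theta=r_0$ to $Kr_0\int\sin\theta\,\kappa(\theta) q_0(\theta)\dd\theta$, discard the two constant summands of $\kappa$, and integrate by parts. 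However, there is a sign obstruction that your write-up does not resolve. Carrying the minus sign literally from \eqref{eq:D}/\eqref{eq:drift_b}, you correctly reduce to the requirement $\frac{\cZ_0^2}{\cZ_0^2-4\pi^2}\,\mathcal{I}=-1$, with $\mathcal{I}=Kr_0\int_0^{2\pi}\sin\theta\,\kappa(\theta)q_0(\theta)\dd\theta$. But the explicit integration by parts, which you only sketch, gives $\int_0^{2\pi}\theta\sin\theta\,e^{2Kr_0\cos\theta}\dd\theta=-\frac{\pi e^{2Kr_0}}{Kr_0}+\frac{\cZ_0}{2Kr_0}$ and $\int_0^{2\pi}\sin\theta\,e^{2Kr_0\cos\theta}\int_\theta^{2\pi}e^{-2Kr_0\cos u}\dd u\dd\theta=\frac{e^{2Kr_0}\cZ_0}{2Kr_0}-\frac{\pi}{Kr_0}$, from which $\mathcal{I}=\frac{\cZ_0^2-4\pi^2}{\cZ_0^2}$, i.e.\ the target quantity is $+1$, not $-1$. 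Followed to the end, your reduction therefore yields $b(\xi)=-\gd\sum_k\xi^k\go^k+O(\gd^2)$, the opposite sign from the statement.

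The paper's own proof in effect drops the minus sign from $D_\gd$ before applying Lemma~\ref{lem:exp proj p} (compare the displayed ``$D_0^i(\xi)=\partial_\theta[q_0\sum_k\xi^kJ*q_0]$'' in the proof of Proposition~\ref{prop:exp b} against the leading minus in \eqref{eq:D} and \eqref{eq:drift_b}); this appears to compensate a sign slip elsewhere in the chain, e.g.\ Lemma~\ref{lem:first order projM} asserts $\mathtt{p}_\psi(\partial_\psi q_\psi)=1$, yet $q_\psi(\cdot)=q_{0}(\cdot-\psi)$ gives $\partial_\psi q_\psi=-\partial_\theta q_\psi$ while \eqref{eq:def p psi} forces $\mathtt{p}_\psi(\partial_\theta q_\psi)=1$. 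So your proof does not close as written: either adopt the sign convention the proof of Proposition~\ref{prop:exp b} actually uses (define $D_\gd(\xi)^i=\partial_\theta(q_\gd^i\sum_k\xi^kJ*q_\gd^k)$, without the minus, and then $b(\xi)=\mathtt{p}_\gd(D_\gd(\xi))$) or carry the integration by parts to its conclusion and identify which of the paper's displayed signs has to flip. Asserting that the boundary terms ``telescope to produce exactly the factor $\cZ_0^2-4\pi^2$'' is not enough: that factor indeed appears, but as $+\frac{\cZ_0^2-4\pi^2}{\cZ_0^2}$, which is precisely where your version breaks.
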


\begin{proof}[Proof of Proposition~\ref{prop:exp b}]
First remark that when $\gd=0$, we obtain, using Lemma \ref{lem:exp q}, that for all $i=-d,\ldots,d$:
\begin{equation}
 D_0^i(\xi)\, =\,
 \partial_\theta \left[q_0\sum_{k=-d}^d \xi^k J*q_0\right]\, =\, 0\, ,
\end{equation}
since $\sum_{k=-d}^d \xi^k=0$. We deduce, using again Lemma \ref{lem:exp q}, the following expansion for
$D^i_\gd(\xi)$:
\begin{multline}
 D^i_\gd(\xi)\, =\, \gd \go^i  \partial_\theta \left[ \kappa q_0\sum_{k=-d}^d \xi^k J*q_0\right]
 + \gd  \partial_\theta \left[q_0\sum_{k=-d}^d \xi^k \go^k J*(\kappa q_0)\right]+O(\gd^2)\\
 =\, \gd  \partial_\theta \left[q_0\sum_{k=-d}^d \xi^k \go^k J*(\kappa q_0)\right]+O(\gd^2)\, ,
\end{multline}
where we have used again the fact that $\sum_{k=-d}^d \xi^k=0$. Applying Lemma \ref{lem:exp proj p}, we deduce
\begin{equation}
 b(\xi)\, =\, \gd \left[\frac{\cZ_0^2}{\cZ_0^2-4\pi^2}\int_\bbT \left( 1-\frac{2\pi}{\cZ_0^2 q_0}  \right) q_0 J*(\kappa q_0) \right]
 \sum_{i=-d}^d \sum_{k=-d}^d  \gl^i \xi^k \go^k
 +O(\gd^2)\, ,
\end{equation}
and recalling that $\sum_{i=-d}^d \gl^i=1$ and denoting
\begin{equation}
 c_b\, :=\,  \frac{\cZ_0^2}{\cZ_0^2-4\pi^2} \int_\bbT \left( 1-\frac{2\pi}{\cZ_0^2 q_0}  \right)q_0 J*(\kappa  q_0) \, ,
\end{equation}
we simply obtain
\begin{equation}
 b(\xi)\, =\, \gd c_b \sum_{k=-d}^d \xi^k \go^k +O(\gd^2)\, .
\end{equation}
It remains to show that $c_b=1$.
Now using the fact that $J(\theta-\theta')=-K\sin\theta \cos\theta'+K\cos\theta\sin\theta'$, $\int_0^{2\pi}\sin(\theta)q_0(\theta)=0$
and $\int_0^{2\pi}\cos(\theta)q_0(\theta)=r_0$, we obtain
\begin{equation}
 \int_0^{2\pi}q_0(\theta)J*(\kappa q_0)(\theta)\dd\theta\, =\, Kr_0 \int_0^{2\pi} \sin(\theta')\kappa(\theta')q_0(\theta')\dd\theta'\, ,
\end{equation}
and
\begin{equation}
 \int_0^{2\pi}J*(\kappa q_0)(\theta)\dd\theta\, =\, 0\, .
\end{equation}
So the constant $c_b$ can be simplified as follows
\begin{equation}
 c_b\, =\, \frac{K r_0\cZ_0^2}{\cZ_0^2-4\pi^2}\int_0^{2\pi} \sin(\theta)\kappa(\theta)q_0(\theta)\dd\theta\, ,
\end{equation}
which leads to
\begin{equation}
c_b\, =\, \frac{2Kr_0 \cZ_0^2}{ \cZ_0^2-4\pi^2}\Bigg[\int_0^{2\pi}\sin\theta\frac{e^{2Kr_0\cos\theta}}
 {\cZ_0}\bigg(
 \theta+2\pi \frac{\int_\theta^{2\pi} e^{-2Kr_0\cos u}\dd u}{\cZ_0}\bigg)\dd \theta\Bigg]\, .
\end{equation}
Integrating by parts and using the fact that $\partial_\theta[e^{2Kr_0\cos(\theta)}]
=-2Kr_0\sin\theta e^{2Kr_0\cos\theta}$,  we obtain
\begin{equation}
 \int_0^{2\pi}\theta \sin\theta e^{2Kr_0\cos\theta}\dd\theta\, =\, -\frac{\pi e^{2Kr_0}}{Kr_0}+\frac{\cZ_0}{2Kr_0}\, ,
\end{equation}
and
\begin{equation}
 \int_0^{2\pi}\sin\theta e^{2Kr_0\cos\theta}\int_\theta^{2\pi} e^{-2Kr_0\cos u}\dd u\, =\, \frac{e^{2Kr_0}\cZ_{ 0}}{2Kr_0}
 -\frac{\pi}{Kr_0}\, ,
\end{equation}
which implies that $c_b = \frac{2Kr_0 \cZ_0^2}{ \cZ_0^2-4\pi^2}\left(\frac{1}{2Kr_0}-\frac{4\pi^2}{2Kr_0 \cZ_0^2}\right)=1$. Proposition~\ref{prop:exp b} is proved.
\end{proof}
Using Proposition \ref{prop:exp b}, we can now compute the first order of the variance $v^2$ of the limiting normal distribution of $b(\xi_N)$ when the disorder is i.i.d:
\begin{proof}[Proof of Proposition \ref{prop:xpansion v}]
From the Central Limit Theorem, we know that $\xi_N$ converges as $N\to\infty$ to a Gaussian distribution with mean $0$ and covariance matrix $\Sigma$ satisfying
\begin{equation}
\left\{
\begin{array}{clc}
 \Sigma_{k,k}  \,= & \gl^k(1-\gl^k)   &    k\in \{-d,\ldots,d\}\, , \\
 \Sigma_{k,l}  \, = & -\gl^k\gl^l      & \, k,l\in \{-d,\ldots,d\}, \, k\neq l\, .
\end{array}
\right.
\end{equation}
Applying Proposition \ref{prop:exp b} we obtain
\begin{equation}
 v^2\, =\, \gd^2\left(\sum_{k\in \{-d,\ldots,d\}} \gl^k(1-\gl^k)(\go^k)^2-\sum_{ k,l\in \{-d,\ldots,d\}, \, k\neq l}\gl^k\gl^l \go^k\go^l \right)
 +O(\gd^3)\, ,
\end{equation}
and since $\gl^{-k}=\gl^k$ and $\go^{-k}=-\go^k$ the terms with $l\neq -k$ cancel in the second sum, which gives the result.

\end{proof}

\begin{appendix}

\section{Construction of rigged-spaces}
\label{sec:appendix_rigged_spaces}
We specify here the construction of the Hilbert distributions spaces we work with in this paper. It is based on the notion of \emph{rigged Hilbert spaces} (see \cite{MR697382}, p. 81).
\subsection{Functional spaces on $\bbT$}
Consider $\bL_{ 0}^{ 2}:= \left\lbrace u\in \bL^{ 2},\ \int_{ \bbT} u(\theta)\dd \theta=0\right\rbrace$, the space of square integrable functions with zero mean value, endowed with the norm $ \left\Vert u \right\Vert_{ 2}:= \left( \int_{ \bbT} u(\theta)^{ 2} \dd \theta\right)^{ \frac{ 1}{ 2}}$. We call a \emph{weight} any strictly positive function $ \theta \mapsto w(\theta)$ on $\bbT$. For any weight $w$ on $\bbT$, define $H_{w}^{ 1}$ as the closure of $ \left\lbrace u\in\cC^{ 1}(\bbT), \int_{ \bbT} u(\theta) \dd \theta=0\right\rbrace$ w.r.t. the norm
\[ \left\Vert u \right\Vert_{ 1, w}:= \left( \int_{ \bbT} \left( \partial_{ \theta} u(\theta)\right)^{ 2} w(\theta) \dd \theta\right)^{ \frac{ 1}{ 2}}.\]
There is a continuous and dense injection of $H_{ w}^{ 1}$ into $\bL_{ 0}^{ 2}$ and the corresponding dual space can be identified as $H^{ -1}_{ 1/w}$, that is the closure of $ \left\lbrace u\in\cC^{ 1}(\bbT),\ \int_{ \bbT}u(\theta)\dd \theta=0\right\rbrace$ under the norm
\[ \left\Vert u \right\Vert_{ -1, 1/w}:=  \left(\int_{ \bbT} \frac{ \cU(\theta)^{ 2}}{ w(\theta)} \dd \theta\right)^{ \frac{ 1}{ 2}},\]where $\cU$ is the primitive of $u$ such that $\int_{ \bbT} \frac{ \cU}{ w}=0$.
\subsection{Functional spaces on $\bbT\times \bbR$}
The correct set-up of the paper is to consider test functions of both oscillators and frequencies, that is $(\theta, \omega) \mapsto u(\theta, \omega)$, where $ \theta\in\bbT$ and $ \omega\in\bbR$. Since the disorder is assumed to take a finite number of values $ \left\lbrace \omega^{ -d}, \ldots, \omega^{ d}\right\rbrace$, it is equivalent to consider vector-valued test functions $ \theta \mapsto (u^{ -d}(\theta), \ldots, u^{ d}(\theta))$ and it is straightforward to define the counterparts of the norms defined in the last paragraph for these vector-valued functions: Consider $\bL_{ 0, d}^{ 2}:= \left(\bL_{ 0}^{ 2}\right)^{ d}$ endowed with the product norm
\[ \left\Vert u \right\Vert_{ 2, d}:= \left( \sum_{ k=-d}^{ d} \lambda^{ k}\left\Vert u^{ k} \right\Vert_{ 2}^{ 2}\right)^{ \frac{ 1}{ 2}}.\]
In the same way, consider the space $H_{ w, d}^{ 1}$, closure of $ \left\lbrace (u^{ -d}, \ldots, u^{ d})\in\cC^{ 1}(\bbT), \int_{ \bbT} u^{ k}(\theta)\dd \theta=0\right\rbrace$ under the norm
\begin{equation}
\label{eq:norm1w}
\left\Vert u \right\Vert_{ 1, w, d}\, :=\,  \left(\sum_{ k=-d}^{ d} \lambda^{ k}\left\Vert u^{ k} \right\Vert_{ 1, w}^{ 2}\right)^{ \frac{ 1}{ 2}},
\end{equation}
as well as the space $H_{ 1/w, d}^{ -1}$ endowed with the norm 
\begin{equation}
\label{eq:normHm1d}
\left\Vert u \right\Vert_{ -1, 1/w, d}\, :=\,  \left(\sum_{ k=-d}^{ d} \lambda^{ k}\left\Vert u^{ k} \right\Vert^{ 2}_{ -1, 1/w}\right)^{ \frac{ 1}{ 2}}.
\end{equation}
Note that if $w_{ 1}$ and $w_{ 2}$ are bounded weights, the norms $ \left\Vert \cdot \right\Vert_{ 1, w_{ 1}}$ and $ \left\Vert \cdot \right\Vert_{ 1, w_{ 2}}$ (resp. $ \left\Vert \cdot \right\Vert_{ -1, 1/w_{ 1}}$ and $ \left\Vert \cdot \right\Vert_{ -1, 1/w_{ 2}}$) are equivalent. The same holds for the $(2d+1)$-dimensional norms.

\subsection{Fractional spaces}
Define also the fractional norm $\Vert\cdot\Vert_{\alpha, d}$ (where $ \alpha\geq 0$): consider $\Delta_d$ the Laplacian operator on each coordinate, $\Vert\cdot\Vert_0$ the $L^2$-norm on $\bbT$ and $\Vert u \Vert_{0, d}^2=\sum_k \gl_k \Vert u^k\Vert_{0}^2$ and define
\begin{equation}
\label{eq:norm_laplacian}
 \Vert u\Vert_{\ga, d}^2\, =\, \Vert (1-\Delta_d)^{\ga/2} u\Vert^2_{0, d}\, =\, \sum_{ k=-d}^{ d} \gl_k \Vert (1-\Delta)^{\ga/2} u^k\Vert^2_0\, .
\end{equation}
We denote as $H^{ \alpha}_{ d}$ the closure of regular functions with zero mean-value on $\bbT$ under the previous norm and $H^{ - \alpha}_{ d}$ the corresponding dual space.
\section{Spectral estimates and regularity results on semigroups}
\label{sec:appendix_spectral_semigroups}
The purpose of this paragraph is to establish spectral estimates on $L_{ \psi, \delta}$ and its adjoint as well as regularity estimates on their semigroups $ e^{ tL_{ \psi, \delta}}$ and $ e^{ tL_{ \psi, \delta}^{ \ast}}$.
\subsection{The case $ \delta=0$}
\label{sec:case_delta_0}
The analysis of the dynamics of \eqref{eq:eds_Kur} and \eqref{eq:FP_Kur_finite} is based on perturbations argument on the mean-field plane rotators system \eqref{eq:Kur_without_disorder} and \eqref{eq:FP_without_disorder}. The proof relies in particular strongly on the fact that \eqref{eq:Kur_without_disorder} is reversible, with an explicit free energy \cite{BGP,dahms}. However, one should note that the limit as $\gd\rightarrow 0$ of \eqref{eq:eds_Kur} or \eqref{eq:FP_Kur_finite} is slightly different to the mean-field model \eqref{eq:Kur_without_disorder} - \eqref{eq:FP_without_disorder}. In particular, \eqref{eq:FP_Kur_finite} becomes as $ \delta\to 0$
\begin{equation}\label{eq:FP_non_disordered}
 \partial_t q^i_t(\theta)\, =\, \frac12 \partial^2_\theta q^i_t(\theta)- \partial_\theta\left(q^i_t(\theta)\left(\sum_{k=-d}^d p^k J*q^k_t (\theta)\right)\right)\, ,\ i=-d, \ldots, d\, ,
\end{equation}
which corresponds to the situation where the disorder is no longer present but where the rotators have been (artificially) separated in different subpopulations. Following the terminology of \cite{MR3207725} where \eqref{eq:FP_non_disordered} has been already encountered, we call this system the \emph{non-disordered system}. It is shown in \cite{MR3207725}, Section~2.1, that the non-disordered system \eqref{eq:FP_non_disordered} presents most of the properties of the mean field plane rotators model \eqref{eq:FP_without_disorder}. In particular, for all $K>1$, one can show that \eqref{eq:FP_non_disordered} admits a unique circle $M_{0,nd}$ of synchronized profiles, that is stable as $t\to\infty$. $M_{ 0, nd}$ is given by the translations of the profile $q_{0,nd}=(q_0,\ldots,q_0)$, where $q_0$ is the profile generating the stable circle $M_0$ of non trivial solutions of \eqref{eq:FP_without_disorder}, namely
\begin{equation}
\label{eq:q_0_no_des}
 q_0(\theta)\, :=\, \frac{e^{2Kr_0\cos\theta}}{\cZ_{ 0}(2Kr_{ 0})}\, ,
\end{equation}
where $\cZ_{ 0}(x)= 2\pi I_{ 0}(x)$, $I_{ 0}(x)= \frac{ 1}{ 2\pi} \int_{0}^{2\pi} e^{ x\cos(\theta)}\dd \theta$ is the standard modified Bessel function of order $0$ and $r_0$ is the unique positive solution of the fixed-point problem
\begin{equation}\label{eq:fixed point without disorder}
 r_0\, =\, \Psi_0(2K r_0)\, ,\quad \text{with} \quad \Psi_0(x)\, :=\, \frac{\int_0^{2\pi} \cos (\theta) e^{x\cos \theta}\dd \theta}{\cZ_0(x)}\, .
\end{equation}
The derivation of these stationary solutions is highly similar to the procedure described in Section~\ref{sec:stationary_solutions_disorder} and we refer to the aforementioned references for more details. Note that one can draw a simple correspondance between the present definitions and the definitions of Section~\ref{sec:stationary_solutions_disorder} in the case of $ \delta=0$: namely, one readily sees that, for any $i=-d, \ldots, d$, $S^{ i}_{ 0}(\theta, x)= e^{ x\cos(\theta)} \cZ_{ 0}(x)$ (recall \eqref{eq:S_delta}) and $Z^{ i}_{ 0}(x)= \cZ_{ 0}(x)^{ 2}$, so that the definition of $ \Psi_{ \delta}$ when $ \delta=0$ (recall \eqref{eq:fixed_point_Psi}) coincides with $ \Psi_{ 0}$ given in \eqref{eq:fixed point without disorder}.

\subsection{Spectral estimates when $ \delta=0$}

Define the linearized operator around any stationary solution $q_{ 0, nd}\in M_{ 0, nd}$:
\begin{equation}
\label{eq:A}
(Au)^{ i}\, =\,  \frac{ 1}{ 2} \partial_{ \theta}^{ 2} u^{ i} - \partial_{ \theta} \left((J\ast q_{ 0})u^{ i} + q_{ 0} \sum_{ k=1}^{ d} J\ast u^{ k}\right),\ i=-d, \ldots, d\, ,
\end{equation}
with domain $\cD(A)=\left\lbrace (u^{ -d}, \ldots, u^{ d})\in \cC^{ 2}(\bbT)^{ 2d+1},\ \int_{ \bbT} u^{ k}(\theta)\dd \theta=0, k=-d, \ldots, d\right\rbrace$.
We recall the following result (see \cite{MR3207725}, Proposition 2.1):
\begin{proposition}
\label{prop:A_self_adjoint}
A is essentially self-adjoint with compact resolvent in $H_{ 1/q_{ 0}, d}^{ -1}$. Its spectrum lies in $(-\infty, 0]$, $0$ is a simple eigenvalue, with eigenspace spanned by $ \partial_{ \theta} q_{ 0, nd}$. The spectral gap between $0$ and the rest of the spectrum is denoted as $ \gamma_{ A}$.
\end{proposition}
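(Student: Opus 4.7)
The plan is to exploit a block decomposition of $A$ that reduces its spectral analysis to two simpler operators: the classical single-population linearization $B$ of the non-disordered Kuramoto PDE \eqref{eq:FP_without_disorder} at $q_0$, whose spectral properties are already established in \cite{BGP,GPP2012,Bertini:2013aa}, and a one-dimensional Fokker-Planck operator $T$ that is explicitly reversible with respect to $q_0$.

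First I rewrite, introducing the weighted mean $\bar u := \sum_{k=-d}^d \lambda^k u^k$,
\begin{equation*}
(Au)^i\, =\, \tfrac{1}{2}\partial_\theta^2 u^i - \partial_\theta\bigl((J*q_0) u^i\bigr) - \partial_\theta\bigl(q_0 \, J*\bar u\bigr),
\end{equation*}
and introduce the two subspaces $E_s := \{(v,\ldots,v) : v\in H^{-1}_{1/q_0}\}$ and $E_a := \{u\in H^{-1}_{1/q_0,d}: \bar u = 0\}$, together with the natural projections $P_s u := (\bar u,\ldots,\bar u)$ and $P_a := I-P_s$. Since the ambient norm \eqref{eq:normHm1d} is the $\lambda$-weighted sum of single-variable $H^{-1}_{1/q_0}$-norms and $\sum_k \lambda^k = 1$, a direct computation shows that $P_s, P_a$ are orthogonal projections with $H^{-1}_{1/q_0,d} = E_s \oplus E_a$. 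From the rewritten form of $A$, invariance of both subspaces is immediate: on $E_s$, if $u=(v,\ldots,v)$ then $(Au)^i = B v$ for every $i$, where $Bv := \tfrac12 \partial_\theta^2 v - \partial_\theta((J*q_0)v) - \partial_\theta(q_0 J*v)$ is exactly the single-population linearization for \eqref{eq:FP_without_disorder} at $q_0$; on $E_a$ the coupling term $\partial_\theta(q_0 J*\bar u)$ vanishes and $A$ decouples componentwise as $(Tu^i)_{i=-d,\ldots,d}$, with $Tv := \tfrac12 \partial_\theta^2 v - \partial_\theta((J*q_0)v)$.

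For $B$, I would invoke the known analysis of \cite{BGP} (see also \cite{GPP2012,Bertini:2013aa}) which gives essential self-adjointness in $H^{-1}_{1/q_0}$, compact resolvent, non-positive spectrum and a simple zero eigenvalue with eigenvector $\partial_\theta q_0$, with spectral gap $\gamma_B>0$. For $T$ the key identity is
\begin{equation*}
(J*q_0)(\theta)\, =\, -Kr_0\sin\theta\, =\, \tfrac{1}{2}\partial_\theta \log q_0(\theta),
\end{equation*}
which follows directly from the explicit form \eqref{eq:q_0_no_des} of $q_0$ together with \eqref{eq:fixed point without disorder}; substituting gives $Tv = \tfrac{1}{2}\partial_\theta\bigl(q_0\,\partial_\theta(v/q_0)\bigr)$, which is manifestly self-adjoint and non-positive in $H^{-1}_{1/q_0}$ (as is seen by integrating by parts on primitives). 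Its kernel consists of multiples of $q_0$, which are excluded from the zero-mean ambient space, so $T$ has strictly negative spectrum on our space, with a gap $\gamma_T>0$ from a standard Poincaré-type estimate for the reversible generator; compactness of the resolvent follows from the compactness of the embedding $H^1_{q_0}\hookrightarrow H^{-1}_{1/q_0}$. Collecting, $A$ is block-diagonal, hence essentially self-adjoint with compact resolvent, with $\sigma(A) = \sigma(B)\cup\sigma(T|_{E_a})\subset (-\infty,0]$; the zero eigenvalue comes only from the $E_s$-block (since $T$ contributes strictly negative spectrum on $E_a$) and is simple with eigenvector $\partial_\theta q_{0,nd} = (\partial_\theta q_0,\ldots,\partial_\theta q_0)$, while $\gamma_A = \min(\gamma_B,\gamma_T)>0$.

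The main technical point, and the only real obstacle, is to make the decomposition rigorous in the distributional space $H^{-1}_{1/q_0,d}$: one has to verify that $P_s$ and $P_a$ extend as bounded operators on the full negative Sobolev space, and that $A$, initially defined on the smooth core $\cD(A)$, indeed preserves the projected subspaces when passing to its self-adjoint closure. This should follow routinely by density from the explicit action on $\cD(A)$ and the boundedness of the projections, but some care is required because of the $q_0$-weight and the negative regularity of the ambient space.
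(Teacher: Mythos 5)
The paper does not prove this proposition: it is simply recalled from \cite{MR3207725}, Proposition 2.1, so there is no in-paper argument to compare against. Your reconstruction is nevertheless correct and consistent with the functional-analytic framework the paper relies on.

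The orthogonal decomposition $H^{-1}_{1/q_0,d} = E_s\oplus E_a$ (symmetric direction and its $\lambda$-weighted complement $\{\bar u:=\sum_k\lambda^k u^k = 0\}$) is valid for the weighted norm \eqref{eq:normHm1d} precisely because $\sum_k\lambda^k=1$, and both subspaces are $A$-invariant: on $E_s$ the operator reduces to the single-population plane-rotator linearization $B$ treated in \cite{BGP,GPP2012,Bertini:2013aa}, while on $E_a$ the Hartree term $-\partial_\theta(q_0\,J*\bar u)$ vanishes and $A$ acts componentwise as $T$. The identity $J*q_0 = -Kr_0\sin\theta = \tfrac12\partial_\theta\log q_0$ (which follows from \eqref{eq:q_0_no_des} together with $\int\cos\theta\, q_0 = r_0$ from \eqref{eq:fixed point without disorder} and $\int\sin\theta\, q_0 = 0$ by evenness) gives the divergence form $Tv = \tfrac12\partial_\theta(q_0\,\partial_\theta(v/q_0))$. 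Working with primitives one then checks, for zero-mean $u$,
\begin{equation*}
\langle Tu,u\rangle_{-1,1/q_0}\, =\, -\tfrac12\int_\bbT u^2/q_0\, ,
\end{equation*}
which is symmetric and strictly negative, and a weighted Poincar\'e inequality (the weight $1/q_0$ is bounded above and below) plus the compactness of $H^{1}\hookrightarrow H^{-1}$ give the gap $\gamma_T$ and the compact resolvent. The zero eigenvalue therefore lives entirely in the $E_s$ block, where it is simple by \cite{BGP} with eigenvector $\partial_\theta q_0$, so $\ker A = \Span(\partial_\theta q_{0,nd})$ and $\gamma_A = \min(\gamma_B,\gamma_T)>0$. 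The technicalities you flag at the end (boundedness of $P_s$, $P_a$ as projections on the full negative-order space, invariance under the self-adjoint closure) are real but routine, and follow by density from the explicit action on the smooth core. A minor remark: since $q_0$ is bounded above and below, the spaces $H^1_{q_0}$ and $H^1_{1/q_0}$ coincide with equivalent norms, so your embedding claim for compactness is fine as written.
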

One can deduce from Proposition~\ref{prop:A_self_adjoint} similar spectral properties of its dual $A^{ \ast}$ in $\bL_{ 0, d}^{ 2}$:
\begin{equation}
\label{eq:A_ast}
(A^{ \ast}v)^{ i}\, :=\,  \frac{ 1}{ 2} \partial_{ \theta}^{ 2} v^{ i} + (J\ast q_{ 0})\partial_{ \theta} v^{ i} - \int_{ \bbT} \left((J\ast q_{ 0})\partial_{ \theta} v^{ i}\right)\dd \theta- \sum_{ k=1}^{ d} \lambda^{ k} J\ast (q_{ 0} \partial_{ \theta} v^{ k}),\ i=-d, \ldots, d\, ,
\end{equation}
with domain $\cD(A^{ \ast})= \cD(A)$.
\begin{proposition}
\label{prop:A_ast_sectorial 2}
$A^\ast$ is essentially self-adjoint with compact resolvent in $H_{ 1/q_{ 0}, d}^{ 1}$. Its spectrum lies in $(-\infty, 0]$,
and $0$ is a simple eigenvalue and its spectral gap $ \gamma_{ A^{ \ast}}$ is equal to $ \gamma_{ A}$.
\end{proposition}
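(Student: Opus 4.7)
The approach is to transfer the spectral properties of $A$ established in Proposition~\ref{prop:A_self_adjoint} to those of its $L^2_d$-adjoint $A^*$ through a duality argument. The key observation is that since $q_0$ is smooth and strictly positive on the compact torus $\bbT$, the quantities $q_0$ and $1/q_0$ are both bounded and bounded away from $0$; consequently the weighted spaces $H^1_{q_0,d}$ and $H^1_{1/q_0,d}$ coincide as sets and their norms are equivalent, and similarly for the corresponding negative-order spaces.

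First I would identify $H^1_{1/q_0,d}$ as the dual of $H^{-1}_{1/q_0,d}$ with respect to the $L^2_d$-pairing. By integration by parts and Cauchy--Schwarz,
\begin{equation*}
\bigl|\langle u,v\rangle_{2,d}\bigr|\, =\, \Bigl|\sum_{k=-d}^d \lambda^k\int_\bbT \mathcal{U}^k\,\partial_\theta v^k\,\dd\theta\Bigr|\, \leq\, \|u\|_{-1,1/q_0,d}\,\|v\|_{1,q_0,d}\, ,
\end{equation*}
so the pairing extends continuously to $H^{-1}_{1/q_0,d}\times H^1_{q_0,d}=H^{-1}_{1/q_0,d}\times H^1_{1/q_0,d}$. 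By the very definition of $A^*$ given in \eqref{eq:A_ast}, the identity $\langle Au,v\rangle_{2,d}=\langle u,A^*v\rangle_{2,d}$ holds on the common smooth core $\cD(A)=\cD(A^*)$, so $A^*$ is indeed the dual of $A$ with respect to this pairing.

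Next I would construct the intertwining isometry. Proposition~\ref{prop:A_self_adjoint} provides, via the Riesz representation theorem applied to the Hilbert space $H^{-1}_{1/q_0,d}$, an isomorphism $\Phi:H^{-1}_{1/q_0,d}\to (H^{-1}_{1/q_0,d})^*\simeq H^1_{1/q_0,d}$ turning the $H^{-1}_{1/q_0,d}$ inner product into the canonical pairing with the dual space. The self-adjointness of $A$ then translates, on the dual side, into the identity $A^*\Phi=\Phi A$ on the dense core $\cD(A)$, that is $\Phi$ conjugates $A$ into $A^*$. All the spectral features of $A$---essential self-adjointness, compactness of the resolvent, real non-positive spectrum, simplicity and isolation of the eigenvalue $0$, and the spectral gap $\gamma_L$---are preserved by such a conjugation by an isometric isomorphism, giving at once that $A^*$ is essentially self-adjoint with compact resolvent on $H^1_{1/q_0,d}$, that $\sigma(A^*)=\sigma(A)\subset(-\infty,0]$, that $0$ is a simple eigenvalue of $A^*$, and that $\gamma_{A^*}=\gamma_A$.

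The main obstacle lies in making the intertwining $\Phi A=A^*\Phi$ completely explicit. This requires identifying the inner product on $H^{-1}_{1/q_0,d}$ with respect to which Proposition~\ref{prop:A_self_adjoint} actually produces the self-adjointness of $A$ (in general distinct from the canonical inner product associated with $\|\cdot\|_{-1,1/q_0,d}$), and then computing the associated Riesz isomorphism. A cleaner but equivalent route is to check directly that $\Phi$ is defined (up to equivalence of norms) by $\Phi(u)=\mathcal{V}$ where $\mathcal{V}$ is the double primitive of $u$ properly normalized against the weight $q_0$, and to verify on smooth test vectors that differentiating twice in the relation $\langle Au,\Phi^{-1}w\rangle_{2,d}=\langle u,\Phi^{-1}A^*w\rangle_{2,d}$ indeed reproduces the explicit formula \eqref{eq:A_ast}; this calculation uses crucially the stationary identity $q_0'=2q_0(J\ast q_0)$ to reconcile the $v'$- and zeroth-order terms in $AKv$ and $KA^*v$. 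Once this is done, identification of the $0$-eigenvector of $A^*$ follows by transporting $\partial_\theta q_{0,\mathrm{nd}}$ through $\Phi$.
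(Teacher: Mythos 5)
Your proposal is correct and follows essentially the same route as the paper: both transfer the spectral properties of $A$ to $A^\ast$ by conjugation through an isometry between the $H^{-1}$- and $H^1$-type weighted spaces. The paper simply short-circuits the Riesz-representation detour by writing down the isometry explicitly, $Uf := -\partial_\theta(q_0\,\partial_\theta f)$ (which is exactly the inverse of your ``double primitive normalized against $q_0$'' map $\Phi$), and verifying directly that $U$ is an isometry from $H^1_{q_0,d}$ to $H^{-1}_{1/q_0,d}$ and that $A^\ast = U^{-1}AU$, from which all the claimed properties follow at once.
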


\begin{proof}[Proof of Proposition~\ref{prop:A_ast_sectorial 2}]
Let us introduce the operator $U$ defined from $H_{ q_{ 0}, d}^{ 1}$ to $H_{ 1/q_{ 0}, d}^{ -1}$ as
\[ Uf(\theta):=- \partial_{ \theta}( q_{ 0}(\theta) \partial_{ \theta} f(\theta)).\] $U$ is an isometry between $H_{ q_{ 0}, d}^{ 1}$ and $H_{ 1/q_{ 0}, d}^{ -1}$: $U$ realizes a bijection from $\lbrace  u\in\cC^{ \infty}(\bbT)^{ d}, \linebreak \int_{ \bbT} u^{ k}(\theta) \dd \theta=0, k=1, \ldots, d\rbrace$ into itself and for every $f,g \in H_{ q_{ 0}, d}^{ 1}$,
\begin{multline}
\left\langle Uf,Ug \right\rangle_{ -1, 1/q_{ 0}, d}\,= \, 
\sum_{ k}\int_{\bbT} \frac{ \left(q_{ 0}(\theta) \partial_{ \theta} f^{ k}(\theta)\right)\left(q_{ 0}(\theta) \partial_{ \theta} g^{ k}(\theta)\right)}
{ q_{ 0}(\theta)} \dd \theta
\\=\,   \sum_{ k}\int_{\bbT}  q_{ 0}(\theta) \partial_{ \theta} f^{ k}(\theta) \partial_{ \theta} g^{ k}(\theta)\dd \theta
\, =\,  \left\langle f ,g\right\rangle_{ 1, q_{ 0}, d}\, .
\end{multline}
Moreover, the following identity holds:
\begin{equation}
\label{eq:UA}
  A^{ \ast}\, =\,U^{-1} A U \, ,
\end{equation}
so the operators $A$ on $H^{-1}_{1/q_0,d}$ and $A^\ast$ on $H^1_{1/q_0,d}$ have the same structural and spectral properties.
\end{proof}

\subsection{Spectral estimates of $L_{ \psi, \delta}$ and its adjoint}
We are in position to deduce spectral estimates on the disordered operators $L_{ \delta}$ and its adjoint $L_{ \delta}^{ \ast}$ in $\bL_{ 0, d}^{ 2}$ (we drop the index $ \psi$ in this section for simplicity). 
\begin{proposition}
\label{prop:def_adjoint}
The adjoint $L_{ \delta}^{ \ast}$ of $L_{ \delta}$ in $\bL_{ 0, d}^{ 2}$ is given by
for all $i=1, \ldots, d$
\begin{multline}
(L^*_{ \delta} v)^i\,  =\, \frac12 \partial_{ \theta}^{ 2}v^i +\gd\go^i \partial_{ \theta}v^i+(\partial_{ \theta}v^i)\sum_{k=-d}^d \lambda^k J*q^k_{ \delta} - \int_{ \bbT}  \left((\partial_{ \theta} v^{ i}) \sum_{ k=-d}^{ d} \lambda^{ k}J\ast q_{ \delta}^{ k}\right)\dd \theta\\-\sum_{k=-d}^d \lambda^k J*(q^k_{ \delta} \partial_{ \theta}v^k)\, ,
\end{multline}
with domain $D(L_{ \delta}^{ \ast})= D(A)$.
\end{proposition}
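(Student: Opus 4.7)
The plan is a direct integration-by-parts computation: identify $L_\delta^\ast$ by computing $\langle L_\delta u, v\rangle_{2,d}$ for test functions $u,v \in D(A) = \{(w^{-d},\ldots,w^d)\in\cC^2(\bbT)^{2d+1} \,:\, \int_\bbT w^k = 0\}$ and rearranging the result into the form $\langle u, L_\delta^\ast v\rangle_{2,d}$. I will then verify that the operator obtained does map into the zero-mean space $\bL^2_{0,d}$, which accounts for the correction constant appearing in the statement.

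First I would unfold $\langle L_\delta u, v\rangle_{2,d} = \sum_{i=-d}^d \lambda^i \int_\bbT (L_\delta u)^i(\theta)\, v^i(\theta)\dd\theta$ into four pieces corresponding to the four terms of the expression \eqref{eq:def_Lpsiq} of $L_\delta$. The Laplacian term yields $\tfrac12 \int u^i \partial_\theta^2 v^i\dd\theta$ after two integrations by parts (boundary terms vanish by periodicity on $\bbT$). The drift term $-\delta\omega^i \partial_\theta u^i$ transforms, after one integration by parts, into $\delta\omega^i \int u^i \partial_\theta v^i\dd\theta$. The term $-\partial_\theta(u^i \sum_k \lambda^k J\ast q_\delta^k)$ yields $\int u^i (\partial_\theta v^i)\sum_k \lambda^k (J\ast q_\delta^k)\dd\theta$ by a single integration by parts, which already produces the third summand of $L_\delta^\ast v$ in the pairing.

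The cross-term $-\partial_\theta\bigl(q_\delta^i \sum_k \lambda^k J\ast u^k\bigr)$ is the only one requiring care. After one integration by parts it equals $\sum_k \lambda^k \int q_\delta^i (\partial_\theta v^i)(J\ast u^k)\dd\theta$. Using the oddness of the kernel $J(\cdot) = -K\sin(\cdot)$, so that $J(\theta-\psi) = -J(\psi-\theta)$, one has the ``duality'' identity $\int f(\theta)(J\ast g)(\theta)\dd\theta = -\int g(\psi)(J\ast f)(\psi)\dd\psi$, which converts the above into $-\sum_k \lambda^k \int u^k(\psi)\, J\ast(q_\delta^i \partial_\theta v^i)(\psi)\dd\psi$. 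Summing over $i$ with weights $\lambda^i$ and swapping the names of the indices $i\leftrightarrow k$ then reproduces exactly the last summand $-\sum_k \lambda^k J\ast(q_\delta^k \partial_\theta v^k)$ of the formula for $(L_\delta^\ast v)^i$ (which, being independent of $i$, appears uniformly in each component of the pairing).

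Finally, I will address the additive constant $-\int_\bbT (\partial_\theta v^i)\sum_k \lambda^k (J\ast q_\delta^k)\dd\theta$ in the statement. Since $u$ is zero-mean, adding any $i$-dependent constant $c_i$ to the $i$-th component of the putative adjoint does not change the pairing $\langle u, \cdot\rangle_{2,d}$, so the adjoint is only determined up to such a constant; the canonical choice that makes $(L_\delta^\ast v)^i$ lie in the zero-mean space $\bL^2_{0,d}$ is precisely to subtract the mean value of $(\partial_\theta v^i)\sum_k \lambda^k(J\ast q_\delta^k)$. The remaining three terms $\tfrac12 \partial_\theta^2 v^i$, $\delta\omega^i \partial_\theta v^i$ and $-\sum_k \lambda^k J\ast(q_\delta^k \partial_\theta v^k)$ already have zero mean on $\bbT$: the first two as exact derivatives, and the third because $\int_\bbT J\ast h\dd\theta = \bigl(\int_\bbT J\bigr)\bigl(\int_\bbT h\bigr) = 0$ as $J$ is odd. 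The domain equality $D(L_\delta^\ast) = D(A)$ then follows from the same smoothness and zero-mean conditions imposed on $v$. The only mildly subtle point is the index bookkeeping in the cross-term; everything else is routine integration by parts.
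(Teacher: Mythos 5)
Your proof is correct and follows essentially the same route as the paper: expand $\langle L_\delta u, v\rangle_{2,d}$ term by term, integrate by parts on $\bbT$, and use the oddness of $J$ to transpose the convolution. The one genuine addition is that you explain \emph{why} the correction constant $-\int_\bbT(\partial_\theta v^i)\sum_k\lambda^k (J\ast q_\delta^k)\,\dd\theta$ appears: since the pairing is against zero-mean functions the adjoint is only defined modulo componentwise constants, and this is the unique choice landing back in $\bL^2_{0,d}$. The paper simply inserts this constant into the chain of equalities without comment (which is legitimate precisely because it pairs to zero against $u$), so your remark fills in a small gap in readability rather than in logic.
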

\begin{proof}[Proof of Proposition~\ref{prop:def_adjoint}]
For all regular $u$ and $v$,
\begin{align*}
 \langle L_\delta^* v,u\rangle_{ 2, d}\, &=\, \langle v, L_\delta u\rangle_{ 2, d}\\
 &=\, \sum_{i=-d}^d \lambda^i \left\langle v^i, \frac12 \partial_{ \theta}^{ 2}u^i-\gd \go^i \partial_{ \theta}u^i -\partial_{ \theta}\left(u^i\sum_{k=-d}^d \lambda^k J*q_\delta^k+q_\delta^i\sum_{k=-d}^d \lambda^k J*u^k\right)\right\rangle_{ 2, d}\\
 &=\sum_{i=-d}^d\lambda^i\left \langle \frac12 \partial_{ \theta}^{ 2}v^i +\gd\go^i \partial_{ \theta}v^i+\partial_{ \theta}v^i\sum_{k=-d}^d \lambda^k J*q^k_\delta - \int_{ \bbT}  \left(\partial_{ \theta} v^{ i} \sum_{ k=-d}^{ d} J\ast q_{ \delta}^{ k}\right)\dd \theta, u^i \right\rangle_{ 2, d}\\ 
 &+\sum_{i=-d}^d \sum_{k=-d}^d\lambda^i\lambda^k \langle q^i_\delta \partial_{ \theta}v^i, J* u^k\rangle_{ 2, d}\\
& =\, \left\langle \frac12 \partial_{ \theta}^{ 2}v^i +\gd\go^i \partial_{ \theta}v^i+(\partial_{ \theta}v^i)\sum_{k=-d}^d \lambda^k J*q^k_\delta - \int_{ \bbT}  \left((\partial_{ \theta} v^{ i}) \sum_{ k=-d}^{ d} \lambda^{ k}J\ast q_{ \delta}^{ k}\right)\dd \theta, u^i\right\rangle_{ 2, d}\\ &- \left\langle \sum_{k=-d}^d 
 \lambda^k J*(q^k_\delta \partial_{ \theta}v^k), u^i\right\rangle_{ 2, d}\, ,
\end{align*}
which precisely gives \eqref{eq:L_delta_psi0_ast}.
\end{proof}
The main result of this section is the following
\begin{proposition}
\label{prop:L_ast_sectorial}
 There exists $\gd_{ 2}= \delta_{ 2}(K)>0$ such that for all $\gd\leq\gd_{ 2}$, everything that follows is true: the operator $L^*_\gd$ (resp. $L_{ \delta}$) is sectorial in $H^1_{q_0,d}$ (resp. $H_{ 1/q_{ 0}, d}^{ -1}$), its spectrum lies in a sector of the type $\{ \gl\in\bbC:\, |\arg(\gl)|>\pi/2+ \alpha\}$ for some $ \alpha>0$ and $0$ is an isolated eigenvalue for $L_{ \delta}^{ \ast}$ (resp. $L_{ \delta}$), at a distance from the rest of the spectrum denoted by $ \gamma_{L_{ \delta}^{ \ast}}$ (resp. $ \gamma_{ L_{ \delta}}$). Moreover, both $L_{ \delta}$ and $L_{ \delta}^{ \ast}$ generate a $\cC_{ 0}$-semigroup $t \mapsto e^{ tL_{ \delta}}$ (resp. $t \mapsto e^{ tL_{ \delta}^{ \ast}}) $in $\bL_{ 0, d}^{ 2}$ and $ e^{ tL_{ \delta}^{ \ast}}= \left(e^{ tL_{ \delta}}\right)^{ \ast}$.
 \end{proposition}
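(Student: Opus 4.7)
The natural plan is to treat $L_{\psi,\delta}$ (and its adjoint) as a perturbation of the reference operator $A$ introduced in \eqref{eq:A}--\eqref{eq:A_ast}, for which Propositions~\ref{prop:A_self_adjoint}--\ref{prop:A_ast_sectorial 2} already give essential self-adjointness, compact resolvent, simple eigenvalue $0$ and spectral gap $\gamma_A>0$. By the rotational invariance of \eqref{eq:FP_Kur_finite} it suffices to treat $\psi=0$; I drop the $\psi$ index. The proof has three main steps: (i) estimate $L_\delta - A$ as a perturbation; (ii) apply the perturbation theory of sectorial operators; (iii) locate the eigenvalue $0$ and show it is simple and isolated.

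\textbf{Step (i): estimating the perturbation.} A direct inspection gives
\begin{equation*}
((L_\delta - A)u)^i \, =\, -\delta\go^i\partial_\theta u^i - \partial_\theta\!\Bigl(u^i\sum_{k=-d}^d \lambda^k J\ast(q_\delta^k-q_0) + (q_\delta^i-q_0)\sum_{k=-d}^d \lambda^k J\ast u^k\Bigr),
\end{equation*}
up to a harmless reconciliation of the $\lambda^k$-weights between \eqref{eq:A} and \eqref{eq:def_Lpsiq}. Since $q_\delta\to q_{0,nd}$ in every smooth norm as $\delta\to 0$ (differentiate the fixed-point relation \eqref{eq:fixed_point}--\eqref{eq:fixed_point_Psi} in $\delta$, or invoke Lemma~\ref{lem:exp q}), and since $J\ast$ is bounded on every Sobolev space (as $J$ is smooth), the perturbation $L_\delta - A$ is a first-order differential operator with coefficients of size $O(\delta)$ in every $\cC^m$ norm.

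\textbf{Step (ii): sectoriality via perturbation.} On $H^{-1}_{1/q_0,d}$ the operator $A$ is self-adjoint and bounded above, hence sectorial in the sense of \cite{Henry1981}, Def.~1.3.1. The first-order part $-\delta\,\go\cdot\partial_\theta$ is $A$-bounded with relative bound $O(\delta)$ (the loss of one derivative is absorbed by the $\tfrac12\partial_\theta^2$ principal part of $A$), while the remaining terms of $L_\delta-A$ are in fact $A$-compact since they carry either the factor $q_\delta-q_0=O(\delta)$ or the smooth convolution $J\ast(q_\delta^k-q_0)=O(\delta)$. Theorem~1.3.2 in \cite{Henry1981} then ensures that for $\delta\leq\delta_2$ small enough, $L_\delta$ remains sectorial in $H^{-1}_{1/q_0,d}$ with a sector of the same opening as $A$, shifted by at most $O(\delta)$. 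The analogous statement for $L_\delta^\ast$ in $H^1_{q_0,d}$ is obtained by either repeating the argument directly on the adjoint expression given by Proposition~\ref{prop:def_adjoint}, or by transporting sectoriality through the isometry $U$ of \eqref{eq:UA}. Sectoriality then yields the $C_0$-semigroups $e^{tL_\delta}$ and $e^{tL_\delta^\ast}$, and duality in $\bL^2_{0,d}$ gives $(e^{tL_\delta})^\ast = e^{tL_\delta^\ast}$.

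\textbf{Step (iii): the eigenvalue $0$ and the spectral gap.} Differentiating the stationary equation $\partial_t q_\delta \equiv 0$ of \eqref{eq:FP_Kur_finite} in $\theta$ immediately shows $L_\delta\partial_\theta q_\delta=0$, so $0\in\sigma(L_\delta)$ with eigenvector $\partial_\theta q_\delta$. To show $0$ is isolated and simple, I invoke the Kato--Rellich continuity of Riesz projections \cite{MR697382}, Chapter IV: since $0$ is a simple, isolated eigenvalue of $A$ with rank-one Riesz projection, for $\delta$ small the Riesz projection $P^0_\delta$ of $L_\delta$ on a small neighbourhood of $0$ is well defined and has rank one. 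As $\partial_\theta q_\delta$ already provides one eigenvector in its range, $P^0_\delta$ must coincide with the rank-one projection on $\Span(\partial_\theta q_\delta)$. Consequently $\sigma(L_\delta)\setminus\{0\}$ stays within $O(\delta)$ of $\sigma(A)\setminus\{0\}$, and remains at distance at least $\gamma_A/2$ from $0$; this produces the gap $\gamma_{L_\delta}$. The same argument applied through \eqref{eq:UA} handles $L_\delta^\ast$.

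The main obstacle I anticipate is Step~(ii), namely checking that $L_\delta-A$ is genuinely a \emph{small} perturbation in the negative-index space $H^{-1}_{1/q_0,d}$: the term $\delta\,\go\cdot\partial_\theta$ is not bounded on $H^{-1}_{1/q_0,d}$, so one must work with relative bounds against the principal part of $A$, using interpolation between $H^{-1}_{1/q_0,d}$ and $\cD(A)$ to absorb the single derivative with a small constant proportional to $\delta$. This is routine once the correct intermediate fractional spaces are identified, but it is the technical heart of the argument.
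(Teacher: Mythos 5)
Your proposal follows essentially the same route as the paper: write the operator as a relatively bounded $O(\delta)$ perturbation of the non-disordered reference operator $A$ (or $A^*$), obtain sectoriality and a resolvent estimate away from a small disk around the origin, and then invoke Kato-type continuity of the Riesz projection (together with the a priori eigenvector $\partial_\theta q_\delta$) to conclude that $0$ remains a simple, isolated eigenvalue with a uniform spectral gap. Two small inaccuracies worth flagging: your reference to \cite{MR697382} for perturbation of isolated eigenvalues should be \cite{Kato1995} (Brezis does not contain the spectral-projection continuity you need), and the appeal to $A$-compactness of the non-derivative perturbation terms is both unnecessary and not quite right — what matters, and what suffices, is simply that these terms are $O(\delta)$-bounded relative to $A$, which you already have.
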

\begin{proof}[Proof of Proposition~\ref{prop:L_ast_sectorial}]
The result concerning the operator $L_{ \delta}$ has been proved in \cite{MR3207725}, Th. 2.5. For the sake of completeness, we recall here the main arguments concerning $L_{ \delta}^{ \ast}$ in $H_{ q_{ 0}, d}^{ 1}$ but we refer to \cite{MR3207725}, Section~6.2 for precise details.
Note that we need a precise control of the spectrum of $L_{ \delta}^{ \ast}$ around the origin. In particular, one has to ensure that the spectrum of $L_{ \delta}^{ \ast}$ remains in the negative part of the complex plane. We write $L^{ \ast}_{ \delta}$ as a perturbation for small disorder of the non-disordered case:
\begin{equation}
\label{eq:B}
L^{ \ast}_{ \delta} = A^{ \ast}+ B_{ \delta},
\end{equation}
where $A^{ \ast}$ is given in \eqref{eq:A_ast} and $B_{ \delta}$ is a small perturbation as $ \delta\to 0$. More precisely, following the exact same strategy as in \cite{MR3207725}, Proposition 6.5, p. 356, one obtains that the operator $B_{ \delta}$ is $A^{ \ast}$-bounded: there exist constants $a_{ \delta}$ and $b_{ \delta}$ (only depending on $ \delta$ and $K$) such that for all $u$ in the domain of (the closure of) $A^{ \ast}$
 \begin{equation}
 \label{eq:B_A_bounded}
 \left\Vert B_{ \delta}u \right\Vert_{ 1, q_{ 0}, d} \, \leq\,  a_{ \delta} \left\Vert u \right\Vert_{ 1, q_{ 0}, d} + b_{ \delta} \left\Vert A^{ \ast} u \right\Vert_{ 1, q_{ 0}, d},
 \end{equation}
 with $a_{ \delta}=O(\gd)$ and $b_{ \delta}=O(\gd)$, as $ \delta\to 0$. Note that the only things that differs between this result and \cite{MR3207725}, Proposition~6.5 is that we work here with an $H^{ 1}$-norm whereas the result in \cite{MR3207725} concerns an $H^{ -1}$-norm. 
 
Fix some $ \varepsilon>0$ (that will be specified later) and define $L_{ \delta, \varepsilon}^{ \ast}:= L_{ \delta}^{ \ast} - \varepsilon$ and $A_{ \varepsilon} := A- \varepsilon$, so that $L_{ \delta, \varepsilon}^{ \ast}= A_{ \varepsilon}^{ \ast} + B_{ \delta}$. Fix $ \alpha\in(0, \frac{ \pi}{ 2})$ and introduce the following subset of the complex plane
\[ \Sigma_{ \alpha}:= \left\lbrace \lambda\in\bbC,\ \left\vert \arg(\lambda) \right\vert< \frac{ \pi}{ 2} + \alpha\right\rbrace\cup\{0\}.\]
The operator $A_{ \varepsilon}$ (as $A$ itself) is self-adjoint in $H_{ -1, 1/q_{ 0}}$ and hence, sectorial. In particular, there exists $M>0$ such that $ \left\Vert R( \lambda, A_{ \varepsilon}) \right\Vert_{ H_{ 1/q_{ 0}, d}^{ -1}} \leq \frac{ M}{ \left\vert \lambda \right\vert}$, for all $ \lambda\in \Sigma_{ \alpha}$. Note that the constant $M$ is indeed independent of $ \varepsilon>0$ and that the previous inequality is also true for $A$ in place of $A_{ \varepsilon}$ (see \cite{MR3207725}, (6.12)). Using \eqref{eq:UA}, one obtains that $ \left\Vert R( \lambda, A_{ \varepsilon}^{ \ast}) \right\Vert_{ H_{ q_{ 0}, d}^{ 1}}\leq \frac{ M}{ \left\vert \lambda \right\vert}$. For $ \lambda\in \Sigma_{ \alpha}$, $u\in H_{ q_{ 0}, d}^{ 1}$,
\begin{align*}
\left\Vert B_{ \delta} R(\lambda, A^{ \ast} )u \right\Vert_{ 1, q_{ 0}, d} &\leq\,  a_{ \delta} \left\Vert R(\lambda, A^{ \ast}) u \right\Vert_{ 1, q_{ 0}, d} + b_{ \delta} \left\Vert A^{ \ast}R( \lambda, A^{ \ast})u \right\Vert_{ 1, q_{ 0}, d},\\
&\leq \, \frac{ M a_{ \delta}}{ \left\vert \lambda \right\vert} \left\Vert u \right\Vert_{ 1, q_{ 0}, d} + (M+1) b_{ \delta} \left\Vert u \right\Vert_{ 1, q_{ 0, d}},
\end{align*}
Choose $ \delta$ sufficiently small so that $b_{ \delta} (1+M)\leq \frac{ 1}{ 4}$ and $ \frac{ a_{ \delta} M}{ \varepsilon}\leq \frac{ 1}{ 4}$. Then for $ \left\vert \lambda \right\vert > \varepsilon\geq 4 M a_{ \delta}$, we have $ \left\Vert B_{ \delta}R( \lambda, A^{ \ast})u\right\Vert_{ 1}\leq \frac{ 1}{ 2} \left\Vert u \right\Vert_{ 1}$ so that the operator $1- B_{ \delta}R(\lambda, A^{ \ast})$ is invertible from $H^{ 1}_{ q_{ 0}, d}$ into itself, with norm smaller than $2$. A simple computation shows that in this case
\[(\lambda- (A^{ \ast}+B_{ \delta}))^{ -1}= R(\lambda, A^{ \ast}) (1- B_{ \delta} R(\lambda, A^{ \ast}))^{ -1},\]which gives that, for $ \lambda\in \Sigma_{ \alpha}$, $ \left\vert \lambda \right\vert> \varepsilon$, $\left\Vert R(\lambda, L_{ \delta}^{ \ast})\right\Vert_{ H_{ q_{ 0}, d}^{ 1}} \leq \frac{ 2M}{ \left\vert \lambda \right\vert}$.
Consequently, the spectrum of $L_{ \delta}^{ \ast}$ is contained in
\[ \Theta_{ \alpha, \varepsilon}:= \left\lbrace \lambda\in\bbC,\ \frac{ \pi}{ 2} + \alpha \leq \arg(\lambda) \leq \frac{ 3\pi}{ 2} - \alpha\right\rbrace \cup \left\lbrace \lambda\in\bbC,\ \left\vert \lambda \right\vert\leq \varepsilon\right\rbrace.\] In particular, $0\in \rho(L_{ \delta, 2\varepsilon}^{ \ast})$ and for all $ \lambda\in \bbC$ with $\Re(\lambda)>0$ (hence $ \left\vert \lambda \right\vert < \left\vert \lambda + 2 \varepsilon\right\vert$), $ \left\Vert R(\lambda, L_{ \delta, 2 \varepsilon}^{ \ast}) \right\Vert_{ H_{ q_{ 0}, d}^{ 1}}\leq \frac{ M}{ \left\vert \lambda+2 \varepsilon \right\vert}\leq \frac{ M}{ \left\vert \lambda\right\vert}$. The fact that this estimate can be extended to some $ \Sigma_{ \alpha^{ \prime}}$ for some $ \alpha^{ \prime}$ is a consequence of a Taylor's expansion argument (see \cite{MR3207725}, Proposition~6.2), so that $L_{ \delta, 2\varepsilon}^{ \ast}$ (and $L_{ \delta}^{ \ast}$) is indeed sectorial.

At this point, we cannot rule out the possibility that some elements of the spectrum of $L_{ \delta}^{ \ast}$ may lie in $ \Theta_{ \varepsilon, \alpha}\cap \left\lbrace \lambda\in\bbC,\ \Re(\lambda)>0\right\rbrace$. The last point of the proof is to show that one can choose $ \varepsilon$ and a smaller $ \delta$ such that this situation does not hold: choose $ \varepsilon= \frac{ \gamma_{A}}{ 2}>0$, where $ \gamma_{ A}$ is the spectral gap of $A$. In particular, the circle centered in $0$ with radius $ \varepsilon$ separates the eigenvalue $0$ (of multiplicity $1$) from the rest of the spectrum of $A^{ \ast}$. An application of \cite{Kato1995}, Theorem IV-3.18, p. 214, shows that one can choose $ \delta$ sufficiently small so that the spectrum of the perturbed operator $L_{ \delta}^{ \ast}$ is likewise separated by this circle: for such $ \delta$, there is a unique eigenvalue (with multiplicity $1$) within the boundary of this circle). But we know already that $0$ is an eigenvalue for the perturbed 
operator $L_{ \delta}^{ \ast}$. By uniqueness, we conclude that there is no eigenvalue in the positive part of the complex plane. We leave the details of this argument to \cite{MR3207725}, Section~6.2.5. 

Using \cite{Pazy1983}, Corollary 10.6, p. 41, $L_{ \delta}^{ \ast}$ is the generator of the adjoint of $t \mapsto e^{ t L_{ \delta}}$ in $\bL_{0, d}^{ 2}$, which is a $C_{ 0}$-semigroup. This concludes the proof of Proposition~\ref{prop:L_ast_sectorial}. 
\end{proof}

\subsection{Equivalence of norms}
For any $0\leq \beta\leq 1$, consider the interpolation norm $\Vert\cdot\Vert_{V^\beta}$ associated to the sectorial operator $1- L_{ \delta}^{ \ast}$ defined as
\begin{equation}
\label{eq:norm_V_beta}
 \Vert u\Vert_{V^\beta}\, =\, \Vert (1-L_{ \delta}^{ \ast})^\beta u\Vert_{1,q_{ 0}, d}\, .
\end{equation}
Recall also the definition of the fractional norm in \eqref{eq:norm_laplacian}.
\begin{lemma}
\label{lem:equiv_norm_beta}
Under the assumptions of Proposition~\ref{prop:L_ast_sectorial}, for any $0\leq \beta\leq 1$, there exists $c_{ 1}, C_{ 1}>0$ such that for all $u$,
\begin{equation}
\label{eq:equiv_norm_beta}
 c_1 \Vert u\Vert_{1+2\beta, d}  \, \leq\, \Vert u\Vert_{V^\beta}\, \leq\, C_1 \Vert u\Vert_{1+2\beta, d}\, .
\end{equation}
\end{lemma}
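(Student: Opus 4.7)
The plan is to establish \eqref{eq:equiv_norm_beta} at the two endpoints $\beta = 0$ and $\beta = 1$ and then obtain the intermediate range by interpolation of the fractional powers of the sectorial operator $1 - L_\delta^*$ provided by Proposition~\ref{prop:L_ast_sectorial}. The endpoint $\beta = 0$ reduces to comparing $\Vert u\Vert_{1, q_0, d}$ with $\Vert u\Vert_{1,d}$: since the stationary density $q_0$ in \eqref{eq:q_0_no_des} is smooth and uniformly bounded above and below on the compact torus $\bbT$ (as $e^{-2Kr_0}/\cZ_0 \leq q_0 \leq e^{2Kr_0}/\cZ_0$), both the weighted and unweighted $H^1$ norms are equivalent, and for mean-zero functions on $\bbT$ the latter coincides up to equivalence with the fractional-scale norm \eqref{eq:norm_laplacian} at index $1$ via the standard Fourier characterization.

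For the endpoint $\beta = 1$, I would split $L_\delta^*$ given by \eqref{eq:L_delta_psi0_ast} into its principal part $\tfrac{1}{2}\Delta_d$ (second-order, acting diagonally on each coordinate) and a remainder $B_\delta^{(1)}$ collecting the first-order multiplication terms $\delta \omega^i \partial_\theta v^i + (\partial_\theta v^i)\sum_k \lambda^k J*q_\delta^k$ (with smooth coefficients, since $q_\delta$ is smooth) together with the non-local zeroth-order term $\sum_k \lambda^k J*(q_\delta^k \partial_\theta v^k)$, which is in fact smoothing because the kernel $J$ in \eqref{eq:def_J} is smooth. The principal part $1 - \tfrac{1}{2}\Delta_d$ is uniformly elliptic of order $2$, hence realizes an isomorphism from the mean-zero subspace of $H^3_d$ onto that of $H^1_d$; the remainder $B_\delta^{(1)}$ is strictly of lower order, so a classical elliptic-regularity argument combined with the closed graph theorem yields $\Vert (1-L_\delta^*) u \Vert_{1,d} \asymp \Vert u \Vert_{3,d}$, and composing with the weight equivalence of the previous step gives the $\beta = 1$ case.

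For general $\beta \in (0,1)$, I would invoke the interpolation theorem for fractional powers of a sectorial operator (see \cite{Henry1981}, Theorem~1.4.2), which identifies $D((1-L_\delta^*)^\beta)$, with its graph norm $\Vert\cdot\Vert_{V^\beta}$, as an interpolation space between the base space $H^1_{q_0,d}$ and the domain $D(1-L_\delta^*)$. The endpoint identifications of the previous two steps are $H^1_d$ and the mean-zero subspace of $H^3_d$ respectively, and complex interpolation on the scale $(H^\alpha_d)_{\alpha\geq 0}$ then yields $H^{1+2\beta}_d$ at the intermediate index. The main technical obstacle is justifying that the interpolation of the domains of fractional powers of $1 - L_\delta^*$ coincides with the classical Sobolev interpolation scale, which in general requires $1 - L_\delta^*$ to have bounded imaginary powers. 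This is where the smallness of $\delta$ is exploited: the $A^*$-bounded perturbation estimate \eqref{eq:B_A_bounded} from the proof of Proposition~\ref{prop:L_ast_sectorial} allows one to transfer the property from the self-adjoint operator $1 - A^*$ (Proposition~\ref{prop:A_ast_sectorial 2}), for which the identification of fractional-power domains with interpolation spaces is automatic, to its small sectorial perturbation $1 - L_\delta^*$.
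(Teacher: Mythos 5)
Your proof captures the key structural observation underlying the paper's argument — decomposing $L_\delta^*$ as $\tfrac12\Delta_d + R$ with $R$ a first-order (hence relatively $\Delta_d^{1/2}$-bounded) perturbation — and your treatment of the two endpoints $\beta=0,1$ is correct. However, the passage from the endpoints to intermediate $\beta$ via complex interpolation has a genuine gap at exactly the point you flag. You assert that bounded imaginary powers (BIP) for $1-L_\delta^*$ can be obtained from the self-adjoint $1-A^*$ by a small relatively-bounded perturbation, citing \eqref{eq:B_A_bounded}. This is not a theorem: stability of BIP (and a fortiori of the identification $D(A^\beta)=[X,D(A)]_\beta$) under relatively bounded lower-order perturbations is a delicate matter and does not follow from a relative-bound estimate with small constants alone. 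Indeed, it is known that bounded $H^\infty$-calculus, and related properties, can fail under lower-order perturbations of operators that possess them. Unless you supply a specific perturbation theorem applicable here, this step is not justified.

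The paper avoids this issue entirely. After establishing the same bound $\Vert R u\Vert_{1,d}\leq C\Vert u\Vert_{2,d}$, it invokes \cite{Henry1981}, Theorem~1.4.8, which is tailored precisely to this situation: if two sectorial operators $A_1,A_2$ with $\mathrm{Re}\,\sigma(A_j)>0$ have the same domain and $(A_1-A_2)A_1^{-\alpha}$ is bounded for some $\alpha\in[0,1)$, then $A_1^\beta A_2^{-\beta}$ and $A_2^\beta A_1^{-\beta}$ are bounded for all $\beta\in[0,1]$. Applied with $A_1=1-\Delta_d$ (self-adjoint, for which $D(A_1^\beta)=H^{1+2\beta}_d$ is classical) and $A_2\propto 1-L_\delta^*$ and $\alpha=1/2$, this directly yields the equivalence $\Vert(1-L_\delta^*)^\beta u\Vert_{1,d}\asymp\Vert(1-\Delta_d)^\beta u\Vert_{1,d}$ uniformly in $\beta$, bypassing interpolation and BIP altogether. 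Note also that this route needs only that $R$ is of strictly lower order, not that $\delta$ is small (smallness enters only to guarantee sectoriality of $L_\delta^*$). I suggest replacing your interpolation/BIP argument with an appeal to Henry's Theorem~1.4.8, which turns your correct endpoint analysis into a complete proof.
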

\begin{proof}[Proof of Lemma~\ref{lem:equiv_norm_beta}]
We can decompose $L_{ \delta}^{ \ast}$ as follows:
\begin{equation}
 L_{ \delta}^{ \ast}\, =\, \frac{1}{2}\Delta_d + R\, ,
\end{equation}
where, for all $i=1, \ldots, d$
\begin{multline}
 (R v)^i\, =\, \gd\go^i \partial_{ \theta}v^i+\partial_{ \theta}v^i\sum_{k=-d}^d \lambda^k J*q^k_0-\sum_{k=-d}^d  \lambda^k J*(q^k_0 (\partial_{ \theta}v^k))\\
 - \int_\bbT \left(\partial_{ \theta}v^i(\theta)\sum_{k=-d}^d \lambda^k J*q^k_0(\theta)\right)\dd\theta\, .
\end{multline}
Since $R$ only contains first order derivatives and $J$ and $q^k_0$ are smooth, it is easy to see that for all $u\in H^{2}_d$, we have
\begin{equation}
 \Vert R u \Vert_{1, d}\, \leq\, C\Vert u\Vert_{2, d}\, .
\end{equation}
One deduces immediately from this estimate that there exists a constant $C>0$ such that for $u\in H_{ d}^{ 2}$, 
\begin{equation}
\label{eq:comp_L_Delta}
\left \Vert \left[2(1-L_{ \delta}^{ \ast})-(1-\Delta_d) \right] u\right\Vert_{1, d}\, \leq\, C\Vert u\Vert_{2, d}\, .  
\end{equation}
Consequently, the operator $[2(1-L_{ \delta}^{ \ast})-(1-\Delta_d)](1-\Delta_d)^{-1/2}$ is bounded in $H_{ d}^{ 1}$. Since $1-L_{ \delta}^{ \ast}$ is sectorial in $H_{ d}^{ 1}$ with the same domain as $ \Delta_{ d}$, an application of \cite{Henry1981}, Theorem 1.4.8 shows that the norms $ \left\Vert (1- L_{ \delta}^{ \ast})^{ \beta}\cdot \right\Vert_{ 1, d}$ and $ \left\Vert (1- \Delta_{ d})^{ \beta} \cdot \right\Vert_{ 1, d}$ are equivalent. The norm equivalence \eqref{eq:equiv_norm_beta} follows directly from the definitions \eqref{eq:norm_V_beta} and \eqref{eq:norm_laplacian}.
\end{proof}

\subsection{Regularity of semigroups}
Recall here the definition of the projection $P^{ 0}_{ \psi, \delta}$ on the kernel $ \Span(\partial_{ \theta} q_{ \psi, \delta})$ of $L_{ \psi, \delta}$ defined in Section~\ref{sec:lin stab}. We drop here the dependance on $ \psi$ for simplicity. The corresponding projection on the kernel of $L_{ \delta}^{ \ast}$ is given by $P_{ \delta}^{ 0, \ast}$. This kernel is one-dimensional, spanned by some $ \theta \mapsto v_{ 0}(\theta)$ and there exists a linear form $\tilde p$, bounded on $H_{ d}^{ 1}$ such that, for all $u\in H_{ d}^{ 1}$, $P^{ 0, \ast}_{ \delta}u= \tilde p(u) v_{ 0}$. Note that it is easy to see that $v_{ 0}$ is a regular ($\cC^{ \infty}$) function on $\bbT$.

\begin{proposition}
\label{prop:reg_semigroup_adjoint}
Suppose the assumptions of Proposition~\ref{prop:L_ast_sectorial} are true. For any $ \gamma\in [0, \gamma_{L_{ \delta}^{ \ast}})$, any $ \beta\in[0, 1]$ and all $t>0$, $u\in H_{ d}^{ 1}$,
\begin{equation}
\label{eq:reg_semigroup_adjoint_m1}
\left\Vert  e^{t L^*_{ \delta}} (1-P_{ \delta}^{ 0, \ast})u\right\Vert_{ 1+2 \beta, d}\, \leq\,  C  \frac{ e^{ - \gamma t}}{ t^{ \beta}}\Vert
(1-P_{ \delta}^{ 0, \ast})u\Vert_{ 1, d}\, ,
\end{equation}
and
\begin{equation}
\label{eq:reg_semigroup_adjoint}
\left\Vert  e^{t L^*_{ \delta}} u\right\Vert_{ 1+2 \beta, d}\, \leq\,  C \left(1 + \frac{ e^{ - \gamma t}}{ t^{ \beta}}\right)\Vert u\Vert_{ 1, d}\, ,
\end{equation}
and for all $\beta\geq 0$, $\beta'\geq 0$ such that $\beta+\beta'\leq 1$ and all $h\in H^{1+2\beta+ 2 \beta^{ \prime}}_d$,
\begin{equation}
\label{eq:reg_semigroup_adjoint_2}
\left\Vert \left( e^{t L^*_{ \delta}}-1\right) h\right\Vert_{ 1+2\beta', d} \, \leq\, t^\beta\Vert (1-P_{ \delta}^{ 0, \ast}) h\Vert_{ 1+2\beta'+2\beta, d}\, .
\end{equation}
\end{proposition}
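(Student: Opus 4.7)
All three estimates reduce to standard analytic-semigroup bounds once the spectral picture from Proposition~\ref{prop:L_ast_sectorial} and the norm equivalence from Lemma~\ref{lem:equiv_norm_beta} are in hand. The key observation is that on the closed invariant subspace $N^{*}:=\mathrm{Range}(1-P^{0,\ast}_\delta)$ the operator $L^{*}_\delta$ is sectorial in $H^{1}_{q_0,d}$ with spectrum entirely contained in $\{\lambda\in\bbC:\mathrm{Re}\,\lambda\leq -\gamma_{L^*_\delta}\}$, since the eigenvalue $0$ has been removed. Standard sectorial-semigroup estimates (e.g.\ \cite{Henry1981}, Theorem~1.4.3) then give, for any $0\leq\gamma<\gamma_{L^*_\delta}$ and any $\beta\in[0,1]$,
\[
\|(-L^{*}_\delta)^\beta e^{tL^{*}_\delta}u\|_{1,q_0,d}\leq C\,t^{-\beta}e^{-\gamma t}\|u\|_{1,d},\qquad u\in N^{*}.
\]
Because $L^{*}_\delta|_{N^*}$ is invertible with spectrum bounded away from~$0$, the fractional powers $(-L^{*}_\delta)^\beta$ and $(1-L^{*}_\delta)^\beta$ are comparable on $N^*$, so the same bound holds with $(1-L^{*}_\delta)^\beta$ in place of $(-L^{*}_\delta)^\beta$. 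Combined with Lemma~\ref{lem:equiv_norm_beta}, i.e.\ $\|v\|_{1+2\beta,d}\simeq\|(1-L^{*}_\delta)^\beta v\|_{1,q_0,d}$, this yields \eqref{eq:reg_semigroup_adjoint_m1}.

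\textbf{The full-space bound.} For \eqref{eq:reg_semigroup_adjoint} I decompose $u=P^{0,\ast}_\delta u+(1-P^{0,\ast}_\delta)u$. The projection onto the kernel equals $P^{0,\ast}_\delta u=\tilde p(u)v_0$ with $v_0\in\cC^\infty(\bbT)$ fixed and $\tilde p$ continuous on $H^1_d$, hence $e^{tL^{*}_\delta}P^{0,\ast}_\delta u=P^{0,\ast}_\delta u$ is time-independent and satisfies
\[
\|e^{tL^{*}_\delta}P^{0,\ast}_\delta u\|_{1+2\beta,d}\,\leq\,\|v_0\|_{1+2\beta,d}|\tilde p(u)|\,\leq\,C\|u\|_{1,d}.
\]
The complementary part is estimated by \eqref{eq:reg_semigroup_adjoint_m1} together with the continuity bound $\|(1-P^{0,\ast}_\delta)u\|_{1,d}\leq C\|u\|_{1,d}$; summing the two pieces produces the $1+e^{-\gamma t}/t^\beta$ factor.

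\textbf{Time regularity.} For \eqref{eq:reg_semigroup_adjoint_2}, observe first that $(e^{tL^{*}_\delta}-1)P^{0,\ast}_\delta h=0$, so $h$ may be replaced by $(1-P^{0,\ast}_\delta)h$ and the analysis is again confined to $N^*$. On $N^*$ the classical sectorial bound $\|(e^{tL}-I)x\|\leq C\,t^\beta\|(-L)^\beta x\|$ (valid for $0\leq\beta\leq 1$, \cite{Henry1981}, Theorem~1.4.3), combined with the commutation of $(1-L^{*}_\delta)^{\beta'}$ with the semigroup, yields
\[
\|(1-L^{*}_\delta)^{\beta'}(e^{tL^{*}_\delta}-1)(1-P^{0,\ast}_\delta)h\|_{1,q_0,d}\leq C\,t^\beta\|(1-L^{*}_\delta)^{\beta+\beta'}(1-P^{0,\ast}_\delta)h\|_{1,q_0,d}.
\]
The hypothesis $\beta+\beta'\leq 1$ is precisely what is needed to apply the fractional estimate. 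Invoking Lemma~\ref{lem:equiv_norm_beta} on both sides transforms this into \eqref{eq:reg_semigroup_adjoint_2}.

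\textbf{Main obstacle.} The genuinely delicate point is the transition between the natural fractional powers $(-L^{*}_\delta)^\beta$ of sectorial-semigroup theory and the powers $(1-L^{*}_\delta)^\beta$ appearing in Lemma~\ref{lem:equiv_norm_beta}. On the whole space $0\in\sigma(L^{*}_\delta)$, so $(-L^{*}_\delta)^\beta$ is not even well defined there; restricting to $N^*$ removes this obstruction, but one must then verify that the shift by $1$ is harmless at the level of fractional norms. This relies on the bounded invertibility of $(1-L^*_\delta)(-L^*_\delta)^{-1}$ on $N^*$ and on the standard moment inequality for fractional powers of sectorial operators, which together guarantee the equivalence of the two scales on the reduced subspace.
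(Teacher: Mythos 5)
Your proposal is correct and follows essentially the same route as the paper: isolate the kernel part via $P^{0,*}_\delta$ (on which the semigroup acts trivially and $v_0$ is smooth), apply Henry's fractional-power estimates for the sectorial operator $L^*_\delta$ restricted to $\mathrm{Range}(1-P^{0,*}_\delta)$, pass between $(-L^*_\delta)^\beta$ and $(1-L^*_\delta)^\beta$ on that subspace, and conclude with the norm equivalence of Lemma~\ref{lem:equiv_norm_beta}. Your explicit ``main obstacle'' remark usefully spells out the $(-L^*_\delta)^\beta$ versus $(1-L^*_\delta)^\beta$ comparability, which the paper dispatches with a terse reference to the proof of Lemma~\ref{lem:equiv_norm_beta}.
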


\begin{proof}[Proof of Proposition~\ref{prop:reg_semigroup_adjoint}]
Following Proposition~\ref{prop:L_ast_sectorial}, $L^*_\gd P_{ \delta}^{ 0, \ast}=0$ and $L^*_\gd (1-P_{ \delta}^{ 0, \ast})$ is sectorial in $H^1_d$, with spectrum lying in $\{ \gl\in\bbC:\, |\arg(\gl)|>\pi/2+\gep'\}- \gamma_{L_{ \delta}^{ \ast}}$ for some $\gep'>0$. 

Let us first prove \eqref{eq:reg_semigroup_adjoint_m1} and \eqref{eq:reg_semigroup_adjoint}.
Using \cite{Henry1981}, Theorem 1.4.3, (recall that $ \gamma< \gamma_{L_{ \delta}^{ \ast}}$), we obtain that for all $t>0$:
\begin{equation}
 \Vert (-L^*_\delta)^\beta e^{t L^*_\gd}(1-P_{ \delta}^{ 0, \ast})u\Vert_{1, d}\, \leq\, C_\beta t^{-\beta}e^{- \gamma t}\Vert u\Vert_{1,d}\, .
\end{equation}
Now as in the proof of Lemma~\ref{lem:equiv_norm_beta}, we can apply \cite{Henry1981}, Theorem 1.4.8 to show that the norms induced by $(-L^*_\gd)^\beta$ and $(1-L^*_\gd)^\beta$ are equivalent on the range of $(1-P_{ \delta}^{ 0, \ast})$ and we obtain for all $u\in H^1_d$
\begin{equation}
 \Vert e^{tL^*_\gd}u \Vert_{1+ 2\beta, d}\, \leq\, C\Vert (1-L^*_\gd)^{\beta}e^{tL^*_\gd}(P_{ \delta}^{ 0, \ast}u +(1-P_{ \delta}^{ 0, \ast})u)\Vert_{1, d} \, \leq\, C'_\beta (1+e^{- \gamma t}t^{-\beta})\Vert u\Vert_{1, d}\, .
\end{equation}
We have used here in particular the fact that for all $u\in H_{ d}^{ 1}$,
\[\left\Vert e^{ tL_{ \delta}^{ \ast}} P_{ \delta}^{ 0, \ast}u\right\Vert_{ 1+2 \beta, d} \leq  \left\vert \tilde p(u) \right\vert \left\Vert e^{ t L_{ \delta}^{ \ast}} v_{ 0}\right\Vert_{ 1+2 \beta, d} = \left\vert \tilde p(u) \right\vert \left\Vert v_{ 0}\right\Vert_{ 1+2 \beta, d}\leq C \left\Vert u \right\Vert_{ 1, d},\]
since $ \left\Vert v_{ 0} \right\Vert_{ 1+2 \beta}< +\infty$. Concerning \eqref{eq:reg_semigroup_adjoint_2}, remark that
\begin{equation}
  e^{t L^*_{ \delta}}-1 \, =\,   \left(e^{t L^*_{ \delta}}(1-P_{ \delta}^{ 0, \ast})-1\right) (1-P_{ \delta}^{ 0, \ast})\, ,
\end{equation}
so applying Theorem 1.8.4 of \cite{Henry1981}
we have
\begin{multline}
\left\Vert \left( e^{t L^*_{ \delta}}-1\right) h\right\Vert_{ 1+2\beta', d} \, \leq\,  C \left\Vert (1-L^*_\gd)^{\beta'}\left( e^{t L^*_{ \delta}}(1-P_{ \delta}^{ 0, \ast})-1\right) (1-P_{ \delta}^{ 0, \ast})h\right\Vert_{ 1, d} \\
\leq \, C''_\beta t^\beta \Vert (1-L^*_\gd)^{\beta'+\beta} (1-P_{ \delta}^{ 0, \ast})h\Vert_{1,d}
\, \leq \, C'''_\beta t^\beta \Vert (1-P_{ \delta}^{ 0, \ast})h\Vert_{1+2\beta'+2\beta,d}\, .
\end{multline}This concludes the proof of Proposition~\ref{prop:reg_semigroup_adjoint}.
\end{proof}
One can deduce from Proposition~\ref{prop:reg_semigroup_adjoint} a similar regularity result concerning the semigroup $t \mapsto e^{ tL_{ \delta}}$:
\begin{proposition}
\label{prop:reg_semigroup} For all $K\geq1$, all $ 0\leq \delta< \delta(K)$, the semigroup $t \mapsto e^{ t L_{ \delta}}$ is continuous from $H_{ d}^{ -2}$ to $H_{ d}^{ -1}$: for all $h\in H_{ d}^{ -2}$, $t>0$,
\begin{equation}
\label{eq:reg_semigroup_m1}
\left\Vert e^{ tL_{ \delta}} h\right\Vert_{ -1, d} \, \leq\,  C \left(1 + \frac{ 1}{ \sqrt{t}}\right) \left\Vert h \right\Vert_{ -2, d},
\end{equation}
and for all $ \varepsilon\in (0, 1/2)$, $t>0$, $ u\geq0$,
\begin{equation}
\label{eq:cont_semigroup_m1}
\left\Vert e^{ (t+u) L_{ \delta}}h - e^{ tL_{ \delta}}h\right\Vert_{ -1, d}\,  \leq\,  C u^{ \varepsilon} \left(1 + \frac{ 1}{ t^{ 1/2+ \varepsilon}}\right) \left\Vert h \right\Vert_{ -2, d}.
\end{equation}
\end{proposition}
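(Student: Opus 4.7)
The strategy is to deduce both estimates from the regularity of the adjoint semigroup $e^{tL_\delta^{\ast}}$ on positive-index Sobolev spaces, which is already available in Proposition~\ref{prop:reg_semigroup_adjoint}, by a duality argument with respect to the disorder-weighted pairing $\langle\cdot,\cdot\rangle_d$ introduced at the beginning of Section~\ref{sec:mild_formulation}. Note that Proposition~\ref{prop:L_ast_sectorial} guarantees $(e^{tL_\delta})^{\ast}=e^{tL_\delta^{\ast}}$ on $\bL^2_{0,d}$, and the spaces $H^{\pm\alpha}_d$ of Appendix~\ref{sec:appendix_rigged_spaces} are defined precisely so that $H^{-\alpha}_d$ is the topological dual of $H^\alpha_d$ via the pairing $\langle\cdot,\cdot\rangle_d$.

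First I would establish \eqref{eq:reg_semigroup_m1}. By density of $\bL^2_{0,d}$ in $H^{-2}_d$ and $H^1_d$, for $h\in H^{-2}_d$ and any regular test function $\phi$ with $\|\phi\|_{1,d}\leq 1$, the duality relation gives
\begin{equation*}
|\langle e^{tL_\delta}h,\phi\rangle_d|\,=\,|\langle h,e^{tL_\delta^{\ast}}\phi\rangle_d|\,\leq\,\|h\|_{-2,d}\,\|e^{tL_\delta^{\ast}}\phi\|_{2,d}.
\end{equation*}
Applying \eqref{eq:reg_semigroup_adjoint} with $\beta=1/2$ yields $\|e^{tL_\delta^{\ast}}\phi\|_{2,d}\leq C(1+e^{-\gamma t}/\sqrt{t})\|\phi\|_{1,d}$, and taking the supremum over $\phi$ gives exactly \eqref{eq:reg_semigroup_m1}.

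Next, for \eqref{eq:cont_semigroup_m1}, I would use the semigroup property $e^{(t+u)L_\delta}-e^{tL_\delta}=e^{tL_\delta}(e^{uL_\delta}-1)$, so that
\begin{equation*}
\|e^{(t+u)L_\delta}h-e^{tL_\delta}h\|_{-1,d}\,=\,\sup_{\|\phi\|_{1,d}\leq 1}|\langle h,(e^{uL_\delta^{\ast}}-1)e^{tL_\delta^{\ast}}\phi\rangle_d|.
\end{equation*}
The idea is to regularize first with the semigroup and then use the H\"older-in-time estimate \eqref{eq:reg_semigroup_adjoint_2}. Precisely, for fixed $\varepsilon\in(0,1/2)$, apply \eqref{eq:reg_semigroup_adjoint} with $\beta=\tfrac{1}{2}+\varepsilon\in(0,1]$ to obtain $\|e^{tL_\delta^{\ast}}\phi\|_{2+2\varepsilon,d}\leq C(1+e^{-\gamma t}/t^{1/2+\varepsilon})\|\phi\|_{1,d}$; then apply \eqref{eq:reg_semigroup_adjoint_2} with $\beta'=1/2$ and $\beta=\varepsilon$ (so $\beta+\beta'=1/2+\varepsilon\leq 1$) to the element $h=e^{tL_\delta^{\ast}}\phi$, which gives $\|(e^{uL_\delta^{\ast}}-1)e^{tL_\delta^{\ast}}\phi\|_{2,d}\leq u^\varepsilon\|(1-P^{0,\ast}_\delta)e^{tL_\delta^{\ast}}\phi\|_{2+2\varepsilon,d}$. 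Combining these with $\|h\|_{-2,d}\|\cdot\|_{2,d}$ in the duality bracket and taking the supremum over $\phi$ produces \eqref{eq:cont_semigroup_m1}.

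The only technical subtlety (and the main obstacle, though a mild one) is to ensure that the adjoint identity $(e^{tL_\delta})^{\ast}=e^{tL_\delta^{\ast}}$ extends from the $\bL^2_{0,d}$ setting of Proposition~\ref{prop:L_ast_sectorial} to the rigged-space pairing between $H^{-2}_d$ and $H^2_d$; this is done by approximating $h\in H^{-2}_d$ by smooth functions in $\bL^2_{0,d}$, using the $\bL^2$ adjoint identity for the approximants, and passing to the limit using the continuity of $e^{tL_\delta^{\ast}}$ on $H^\alpha_d$ (which follows from the sectoriality established in Proposition~\ref{prop:L_ast_sectorial} combined with the norm equivalence of Lemma~\ref{lem:equiv_norm_beta}). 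Also, \eqref{eq:reg_semigroup_adjoint_2} is stated involving $(1-P^{0,\ast}_\delta)$; since $P^{0,\ast}_\delta$ is bounded on every $H^\alpha_d$ with range spanned by the smooth function $v_0$, the factor $\|(1-P^{0,\ast}_\delta)e^{tL_\delta^{\ast}}\phi\|_{2+2\varepsilon,d}$ is controlled by $\|e^{tL_\delta^{\ast}}\phi\|_{2+2\varepsilon,d}$ up to a constant, closing the argument.
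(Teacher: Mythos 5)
The proposal is correct and follows essentially the same route as the paper: both deduce \eqref{eq:reg_semigroup_m1} and \eqref{eq:cont_semigroup_m1} from the adjoint estimates of Proposition~\ref{prop:reg_semigroup_adjoint} via the rigged-space duality, approximating $h$ by smooth functions and using that $1-P^{0,\ast}_{\delta}$ is bounded on $H^{\alpha}_d$. The only cosmetic difference is the order of operations for \eqref{eq:cont_semigroup_m1}: you chain \eqref{eq:reg_semigroup_adjoint} and \eqref{eq:reg_semigroup_adjoint_2} on the adjoint side before dualizing once at the end, whereas the paper first dualizes each estimate to get their negative-index counterparts \eqref{eq:reg_semigroup_mbeta} and \eqref{eq:reg_semigroup_2}, and then chains those.
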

\begin{proof}[Proof of Proposition~\ref{prop:reg_semigroup}]
 Let $ \beta\in[0, 1]$, $t>0$, $h\in H_{ d}^{ -1}$ and $v$ a regular test function. Consider $(h_{ l})_{ l\geq1}$ a sequence of elements of $\bL_{ 0, d}^{ 2}$ converging to $h$ in $H^{ -1}_{ d}$. For all $l\geq1$,
 \begin{align*}
 \left\vert \left\langle e^{ tL_{ \delta}}h_{ l}\, ,\, v\right\rangle_{ d} \right\vert &= \left\vert \left\langle e^{ tL_{ \delta}}h_{ l}\, ,\, v\right\rangle_{ 2, d} \right\vert= \left\vert \left\langle h_{ l}\, ,\, e^{ t L_{ \delta}^{ \ast}}v\right\rangle_{ 2, d} \right\vert,\\
 &\leq \left\Vert h_{ l} \right\Vert_{ -(1+2 \beta), d} \left\Vert e^{ tL_{ \delta}^{ \ast}}v \right\Vert_{ 1+2 \beta, d} \leq C\left\Vert h_{ l} \right\Vert_{ -(1+2 \beta), d} \left(1+ \frac{ 1}{ t^{ \beta}}\right) \left\Vert v \right\Vert_{ 1, d},
 \end{align*}
 where we used \eqref{eq:reg_semigroup_adjoint} in the last inequality. Since $h_{ l}$ converges to $h$ in $H^{ -1}$, one can make $l\to\infty$ in the previous inequality and obtain $\left\vert \left\langle e^{ tL_{ \delta}}h\, ,\, v\right\rangle_{ d} \right\vert \leq C\left\Vert h\right\Vert_{ -(1+2 \beta), d} \left(1+ \frac{ 1}{ t^{ \beta}}\right) \left\Vert v \right\Vert_{ 1, d}$ and since this is true for all regular $v$, one deduces that
 \begin{equation}
 \label{eq:reg_semigroup_mbeta}
 \left\Vert e^{ t L_{ \delta}}h \right\Vert_{ -1, d}\, \leq\,  C \left(1+ \frac{ 1}{ t^{ \beta}}\right) \left\Vert h \right\Vert_{ -(1+2 \beta), d},
 \end{equation}
 which gives \eqref{eq:reg_semigroup_adjoint_m1} when $ \beta= \frac{ 1}{ 2}$. In the same way, an immediate corollary of \eqref{eq:reg_semigroup_adjoint_2} is that for all $ \beta\geq0, \beta^{ \prime}\geq0$ such that $ \beta+ \beta^{ \prime}\leq 1$, for all $t>0$
 \begin{equation}
 \label{eq:reg_semigroup_2}
 \left\Vert \left(e^{ tL_{ \delta}} - 1\right)h \right\Vert_{ -(1 + 2 \beta + 2 \beta^{ \prime}), d}\, \leq \, t^{ \beta} \left\Vert h \right\Vert_{ -(1+ 2 \beta^{ \prime}), d}\, .
 \end{equation}
 We now turn to the proof of \eqref{eq:cont_semigroup_m1}. Fix $ \varepsilon\in(0, 1/2)$ and apply \eqref{eq:reg_semigroup_mbeta} for $ \beta= 1/2 + \varepsilon$ and \eqref{eq:reg_semigroup_2} for $ \beta= \varepsilon$ and $ \beta^{ \prime}= \frac{ 1}{ 2}$,
 \begin{align*}
 \left\Vert e^{ (t+u)L_{ \delta}}h - e^{ t L_{ \delta}}h \right\Vert_{ -1, d}&\leq \, C \left(1 + \frac{ 1}{ t^{ 1/2+ \varepsilon}}\right) \left\Vert \left(e^{ uL_{ \delta}}- 1\right)h \right\Vert_{ -(2+ 2 \varepsilon), d},\\
 &\leq \, C u^{ \varepsilon}\left(1 + \frac{ 1}{ t^{ 1/2+ \varepsilon}}\right) \left\Vert h \right\Vert_{ -2, d}.
 \end{align*}This concludes the proof of Proposition~\ref{prop:reg_semigroup}.
\end{proof}
\section{Projections}\label{sec:appendix projections}
The purpose of this section is to prove several regularity results concerning the projection $P^{ 0}_{ \psi, \delta}u= \mathtt{p}_{ \psi, \delta}(u) \partial_{ \theta} q_{ \psi, \delta}$ (recall Section~\ref{sec:lin stab} and \eqref{eq:def p psi}) and the projection on the manifold $M$ $\proj_{ M}(\cdot)$ defined in Lemma~\ref{lem:existence projM}.
\begin{proof}[Proof of Lemma \ref{lem:existence projM}]
We first prove that $\psi\mapsto \mathtt{p}_\psi$ is smooth. This follows from the fact that the whole operator $L_{ \psi}$ is regular in $ \psi\in \bbT$: we prove indeed that the mapping $ \psi \mapsto L_{ \psi}$ is in fact real holomorphic, in the sense of Kato \cite{Kato1995}, p.375. Since the problem is invariant by rotation, it suffices to study the regularity of $L_{ \psi}$ is a neighborhood of $ \psi=0$. From the definition of the stationary solution $q$ in \eqref{eq:def_q}, it is straightforward to see that one can expand $q_{ \psi}$ in series of $ \psi$ around $ \psi=0$:
\[q_{ \psi}(\theta) = q_{ 0}(\theta) + \sum_{ k\geq1} \frac{ \psi^{ k}}{ k!}\partial_{ \psi}^{ k} {q_{ \psi}}_{ \vert_{ \psi=0}}(\theta).\]From this expansion, one deduces a similar expansion for $L_{ \psi}$ around $ \psi=0$: for all $f$ regular
\[L_{ \psi}f= L_{ 0}f + \sum_{ k\geq 1} \psi^{ k} U_{ k} f,\] where each $U_{ k}$ is a differential operator of order $1$, so that each $U_{ k}$ is relatively-bounded w.r.t $L_{ 0}$. In particular the hypotheses of \cite{Kato1995}, Theorem~2.6, p. 377 are satisfied. In particular, $(L_{ \psi})_{ \psi}$ forms a real-holomorphic family.  In particular, the mapping $ \psi \mapsto P^{ 0}_{ \psi}$ is also regular (\cite{Kato1995}, Theorem~1.7, p. 368),
and so is the mapping $\psi \mapsto\mathtt{p}_\psi $. Then the mapping $f(\psi,h)=\mathtt{p}_\psi(h-q_\psi)$ satisfies for each fixed $\psi_0$, $f(\psi_0,q_{\psi_0})=0$ and $\partial_\psi f(\psi_0,q_{\psi_0})=-\mathtt{p}_{\psi_0}\partial_\psi q_{\psi_0}=-1$. So by the implicit function theorem, for all $h$ in a certain neighborhood of $q_{\psi_0}$, there exists a unique $\psi=:\proj_M(h)$ such that $f(\psi, h)=0$ and $h\mapsto \proj_M(h)$ is smooth.
\end{proof}
The next result states that the first order of the projection $\proj_{ M}$ around $q_{ \psi}$ is given by the linear form $\mathtt{p}_{ \psi}$ defined in \eqref{eq:def p psi}.
\begin{lemma}\label{lem:first order projM}
For $ \psi\in \bbT$, $h\in H^{-1}_d$ such that $\proj_M(q_\psi+h)$ is well-defined, we have
\begin{equation}
\proj_M(q_\psi+h)\, =\, \psi + \mathtt{p}_\psi(h) + O(\Vert h\Vert_{-1,d}^2)\, .
\end{equation}
\end{lemma}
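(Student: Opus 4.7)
The plan is to exploit the implicit characterization of $\proj_M$ obtained in the proof of Lemma~\ref{lem:existence projM}: $\phi := \proj_M(q_\psi + h)$ is, for $\|h\|_{-1,d}$ small, the unique solution near $\psi$ of
\[
\mathtt{p}_\phi\bigl(q_\psi + h - q_\phi\bigr)\, =\, 0.
\]
Setting $\eta := \phi - \psi$, the $\cC^\infty$ regularity of $\proj_M$ and the normalization $\proj_M(q_\psi) = \psi$ yield the a priori estimate $|\eta| \leq C\|h\|_{-1,d}$, which I will feed back into the error terms at the end of the argument.

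I would then Taylor expand each $\phi$-dependent object to first order in $\eta$. From the real-analytic dependence $\psi \mapsto q_\psi$ (essentially read off from the explicit formula \eqref{eq:def_q}) we get, in $H^{-1}_d$,
\[
q_\phi\, =\, q_\psi + \eta\,\partial_\theta q_\psi + O(\eta^2),
\]
so that $q_\psi + h - q_\phi = h - \eta\,\partial_\theta q_\psi + O(\eta^2)$. Similarly, the holomorphic family structure of $\psi \mapsto L_\psi$ recalled in the proof of Lemma~\ref{lem:existence projM}, combined with the Riesz-integral representation
\[
P^0_\psi\, =\, \frac{1}{2\pi i}\oint_\Gamma R(\zeta, L_\psi)\,\mathrm{d}\zeta
\]
along a small loop $\Gamma$ around $0$, shows that $\psi \mapsto \mathtt{p}_\psi$ is $\cC^1$ as an element of the topological dual of $H^{-1}_d$, whence $\mathtt{p}_\phi = \mathtt{p}_\psi + O(\eta)$ in this dual norm. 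Plugging both expansions into the defining equation and using $\mathtt{p}_\psi(\partial_\theta q_\psi) = 1$ (immediate from $P^0_\psi \partial_\theta q_\psi = \partial_\theta q_\psi$) then gives
\[
0\, =\, \mathtt{p}_\psi(h) - \eta + O\bigl(\eta\,\|h\|_{-1,d}\bigr) + O(\eta^2),
\]
and substituting the a priori bound $\eta = O(\|h\|_{-1,d})$ into the error terms closes the estimate and yields $\eta = \mathtt{p}_\psi(h) + O(\|h\|_{-1,d}^2)$, which is the claim.

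The one step demanding a little care is the uniform control of the Taylor remainders in the correct topology — in particular the Lipschitz bound $\mathtt{p}_\phi - \mathtt{p}_\psi = O(\eta)$ as a linear form on $H^{-1}_d$. This follows from the holomorphic dependence of $L_\psi$ on $\psi$, the uniform resolvent bound on $\Gamma$ in a neighborhood of $\psi$, and the bound $\|P^0_\psi u\|_{-1,d} \leq C_P\|u\|_{-1,d}$ from \S\ref{sec:lin stab}; beyond this observation the proof reduces to bookkeeping of the Taylor expansion.
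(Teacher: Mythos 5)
Your argument follows the paper's proof exactly: set $\phi := \proj_M(q_\psi + h)$, characterize it through $\mathtt{p}_\phi(q_\psi + h - q_\phi) = 0$, obtain the a priori bound $\phi - \psi = O(\Vert h\Vert_{-1,d})$ from the smoothness of $\proj_M$, Taylor-expand $q_\phi$ and $\mathtt{p}_\phi$ to first order in $\phi-\psi$, and feed the a priori bound back into the remainders. The only thing you add is an explicit justification of the Lipschitz dependence of $\mathtt{p}_\psi$ on $\psi$ (through the holomorphic family $\psi \mapsto L_\psi$ and the Riesz--Dunford representation of $P^0_\psi$), which the paper compresses into the single phrase ``a first order expansion leads to.''
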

\begin{proof}[Proof of Lemma \ref{lem:first order projM}]
Consider the real $u$ such that $\proj_M(q_\psi+h)=\psi +u$.
Due to the smoothness of $\proj_M$, we have $u=O(\Vert h\Vert_{-1,d})$. The real number $u$ satisfies
\begin{equation}
 \mathtt{p}_{\psi+u}(q_\psi+h-q_{\psi+u})\, =\, 0\, .
\end{equation}
A first order expansion leads to
\begin{equation}
 \mathtt{p}_\psi(h-u \partial_\psi q_\psi)\, =\, O(u^2)\, ,
\end{equation}
which gives the result, since $\mathtt{p}_\psi(\partial_\psi q_\psi)=1$.
\end{proof}
\section{Expansions in $\gd$}\label{sec:appendix expansion delta}
The aim of this section is to obtain first order asymptotic of the drift in Theorem~\ref{th:main} for small $\gd$. We use the notations $q_\gd$, $\mathtt{p}_\gd$ as in Section \ref{sec:asymptotic_drift}, putting the emphasis on the dependency of the different terms in $\gd$. We denote also as $r_{ \delta}>0$ the unique positive solution to the fixed point relation $r_{ \delta} = \Psi_{ \delta}(2Kr_{ \delta})$ (recall \eqref{eq:fixed_point}). We begin with a result concerning $r_{ \delta}$ as $ \delta\to 0$:
\begin{lemma}
\label{lem:derivative_r} The mapping $\gd \mapsto r_\gd$ is $C^\infty$ and its derivative $r^{ \prime}(0)$ at $ \delta=0$ is zero, so that as $ \delta\to 0$:
\begin{equation}
 r_\gd\, =\, r_0+O(\gd^2)\, ,
\end{equation}
where $r_{ 0}$ is the unique non-trivial solution of the fixed-point problem without disorder \eqref{eq:fixed point without disorder}.
\end{lemma}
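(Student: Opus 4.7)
The plan is to apply the implicit function theorem to the defining relation $r=\Psi_\delta(2Kr)$, and then obtain the vanishing of $r'(0)$ by a parity argument on the disorder.

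First I would set $F(\delta,r):=r-\Psi_\delta(2Kr)$, so that $F(0,r_0)=0$. From the explicit formulas \eqref{eq:S_delta}--\eqref{eq:fixed_point_Psi}, the functions $S^k_\delta(\theta,x)$ and $Z^k_\delta(x)$ depend on $\delta$ only through the product $\delta\omega^k$, and they are jointly smooth in $(\delta,x)$ on compact sets (they involve exponentials and integrals over $[0,2\pi]$ of jointly smooth integrands, and the denominators $Z^k_\delta(x)$ do not vanish since $S^k_\delta>0$). Hence $\Psi_\delta(x)$ is jointly $C^\infty$ in $(\delta,x)$, and so is $F$. To invoke the implicit function theorem I need $\partial_r F(0,r_0)\neq0$, i.e.\ $1-2K\Psi_0'(2Kr_0)\neq0$. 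This is a direct consequence of the fact that $r_0$ is the nondegenerate stable positive fixed-point of $r\mapsto\Psi_0(2Kr)$ in the non-disordered Kuramoto model (cf.\ the geometric picture in Figure~\ref{subfig:fonctionPsi} and the linear stability analysis recalled in Section~\ref{sec:case_delta_0}): the curve $r\mapsto\Psi_0(2Kr)$ crosses the diagonal from above at $r_0$, so $2K\Psi_0'(2Kr_0)<1$. This yields that $\delta\mapsto r_\delta$ is $C^\infty$ in a neighborhood of $\delta=0$.

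Next I would differentiate the identity $r_\delta=\Psi_\delta(2Kr_\delta)$ at $\delta=0$:
\begin{equation*}
\bigl(1-2K\Psi_0'(2Kr_0)\bigr)\, r'(0)\, =\, \partial_\delta\Psi_\delta(2Kr_0)\big|_{\delta=0}.
\end{equation*}
Since the factor in front of $r'(0)$ is nonzero by the previous step, the conclusion reduces to showing that the right-hand side vanishes. This is where the symmetry of $\lambda$ enters. Define
\begin{equation*}
\varphi(\eta,x)\, :=\, \frac{\int_0^{2\pi}\cos\theta\, S(\theta,x;\eta)\dd\theta}{Z(x;\eta)},
\end{equation*}
where $S(\theta,x;\eta)$ and $Z(x;\eta)$ denote the expressions obtained from \eqref{eq:S_delta} by replacing the product $\delta\omega^k$ by the scalar $\eta$. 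Then, by the chain rule,
\begin{equation*}
\partial_\delta\Psi_\delta(x)\big|_{\delta=0}\, =\, \sum_{k=-d}^d \lambda^k\,\omega^k\,\partial_\eta\varphi(0,x),
\end{equation*}
and the crucial point is that $\partial_\eta\varphi(0,x)$ is a scalar independent of $k$. Using the symmetry assumption $\lambda^{-k}=\lambda^k$ and $\omega^{-k}=-\omega^k$, the sum $\sum_k\lambda^k\omega^k$ vanishes, and so does $\partial_\delta\Psi_\delta(x)|_{\delta=0}$. Evaluating at $x=2Kr_0$ gives $r'(0)=0$, which, combined with the $C^\infty$ regularity established above and a Taylor expansion, yields $r_\delta=r_0+O(\delta^2)$.

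I do not foresee any serious obstacle here: the analytic expressions \eqref{eq:S_delta}--\eqref{eq:fixed_point_Psi} make the smoothness transparent, and the only mildly delicate point is the nondegeneracy of $\partial_r F(0,r_0)$, which follows from the well-documented linear stability of the synchronized fixed-point $r_0$ of the classical (non-disordered) Kuramoto model recalled in Section~\ref{sec:case_delta_0}.
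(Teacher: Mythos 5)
Your proposal follows the same overall strategy as the paper (implicit function theorem on the fixed-point relation, then differentiating and using the symmetry of $\lambda$ to kill the first-order term), so the argument is correct in substance. Two points of comparison are worth making. For the nondegeneracy $2K\Psi_0'(2Kr_0)<1$, you appeal to the "well-documented stability" of $r_0$ and the geometric picture; the paper actually proves this by citing the strict convexity of $\Psi_0$ (\cite{Pearce}, Lemma~4) together with $\Psi_0'(0)>1/(2K)$, which forces the graph to cross the diagonal from above at the unique positive fixed point. Your phrasing comes close to assuming the nondegeneracy you need, so in a final write-up you should ground it in the convexity fact (or another concrete argument) rather than in "stability." On the other hand, your treatment of the vanishing of $\partial_\delta\Psi_\delta|_{\delta=0}$ is cleaner than the paper's: you observe from \eqref{eq:S_delta} that $S^k_\delta$ and $Z^k_\delta$ depend on $\delta$ only through the combination $\delta\omega^k$, so by the chain rule $\partial_\delta\Psi_\delta(x)|_{\delta=0}=\partial_\eta\varphi(0,x)\sum_k\lambda^k\omega^k=0$ since $\lambda$ is symmetric. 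The paper instead computes $\partial_\delta S^k_\delta|_{\delta=0}$ and $\partial_\delta Z^k_\delta|_{\delta=0}$ explicitly and factors out $\omega^k$; your scaling argument reaches the same conclusion with less computation and makes the role of the symmetry transparent.
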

\begin{proof}[Proof of Lemma~\ref{lem:derivative_r}]
Consider the $C^\infty$ mapping $g(r,\gd)=\Psi_\gd(2Kr)-r$. This mapping satisfies
$\partial_r g(r_0,0)=2K\partial_x \Psi_0(2Kr_0)-1$. The fixed-point function $r \mapsto \Psi_{ 0}(2Kr_{ 0})$ is strictly convex when $K>1$ (\cite{Pearce}, Lemma~4), with derivative at the origin strictly greater than $1$. One concludes that the derivative at the fixed point $r_{ 0}>0$ is strictly smaller than $1$. Since this derivative is precisely equal to $2K \partial_{ x}\Psi_{ 0}(2Kr_{ 0})$, this shows that
$\partial_r g(r_0,0)<0$. So the implicit function Theorem implies that $\gd \mapsto r_\gd$ is $C^\infty$. Using \eqref{eq:fixed_point}, one obtains that 
\begin{equation}
r^{ \prime}(0)\, =\,  \partial_{ \delta} \Psi_{ \delta}\vert_{ \delta=0}(2Kr_{ 0}) + r^{ \prime}(0) 2K \partial_{ x}\Psi_{ 0}(2Kr_{ 0}).
\end{equation}
Since $2K\partial_{ x}\Psi_{ 0}(2Kr_{ 0})<1$, the proof of Lemma~\ref{lem:derivative_r} will be finished once we have proved that $\partial_{ \delta} \Psi_{ \delta}\vert_{ \delta=0}(2Kr_{ 0}) = 0$. One has (recall the definition of $\cZ_{ 0}$ in Section~\ref{sec:case_delta_0})
\begin{multline}
\label{aux:derivative_Psi_delta}
\partial_{ \delta}\Psi_{ \delta}\vert_{ \delta=0}(2Kr_{ 0})\, =\,  \sum_{ k=-d}^{ d} \lambda^{ k} \Bigg( \frac{ \int_{0}^{2\pi} \cos(\theta) \partial_{ \delta} S_{ \delta}^{ k}\vert_{ \delta=0}(\theta, 2Kr_{ 0}) \dd \theta}{ \cZ_{ 0}(2Kr_{ 0})^{ 2}} \\ - \frac{ \int_{0}^{2\pi} \cos(\theta) S_{ 0}(\theta, 2Kr_{ 0})}{ \cZ_{ 0}(2Kr_{ 0})^{ 4}} \partial_{ \delta} Z_{ \delta}^{ k}\vert_{ \delta=0}(2Kr_{ 0})\Bigg)\, .
\end{multline}
Some straightforward calculations show that, for all $k=-d, \ldots, d$, $ \theta\in \bbT$
\begin{multline}
\label{aux:derivative_S_delta}
\partial_{ \delta}S^{ k}_{ \delta}\vert_{ \delta=0}(\theta, 2Kr_{ 0})\, =\,  2 \omega^{ k} e^{ 2Kr_{ 0}\cos(\theta)} \Bigg(\theta \int_{0}^{2\pi} e^{ 2Kr_{ 0}\cos(u)}\dd u + 2\pi \int_{ \theta}^{2\pi} e^{ -2Kr_{ 0} \cos(u)}\dd u\\ - \int_{0}^{2\pi} u e^{ -2Kr_{ 0}\cos(u)}\dd u\Bigg)
\end{multline}
and
\begin{align}
\label{aux:derivative_Z_delta}
\partial_{ \delta} Z_{ \delta}^{ k}\vert_{ \delta=0}(2Kr_{ 0})&=\,  2 \omega^{ k} \Bigg( 2\pi \int_{0}^{2\pi} e^{ 2Kr_{ 0}\cos(\theta)} \int_{ \theta}^{2\pi} e^{ -2Kr_{ 0}\cos(u)}\dd u \dd \theta \nonumber\\ &\qquad\qquad +\cZ_{ 0}(2Kr_{ 0}) \int_{0}^{2\pi} u \left( e^{ 2Kr_{ 0}\cos(u)} - e^{ -2Kr_{ 0}\cos(u)}\right)\dd u \Bigg), \nonumber\\
&= \, 4\pi \omega^{ k} \int_{0}^{2\pi} e^{ 2Kr_{ 0}\cos(\theta)} \int_{ \theta}^{2\pi} e^{ -2Kr_{ 0}\cos(u)}\dd u \dd \theta\, .
\end{align}
Since $ \sum_{ k=-d}^{ d} \lambda^{ k} \omega^{ k}=0$, one obtains from \eqref{aux:derivative_Psi_delta}, \eqref{aux:derivative_S_delta} and \eqref{aux:derivative_Z_delta} that $ \partial_{ \delta} \Psi_{ \delta}\vert_{ \delta=0}(2Kr_{ 0})=0$. This concludes the proof of Lemma~\ref{lem:derivative_r}.
\end{proof}
We now turn to the proof of Lemma \ref{lem:exp q}:
\begin{proof}[Proof of Lemma \ref{lem:exp q}]
Obviously, for $\theta\in \bbT$,
\begin{align*}
q^{ i}_{ \delta}(\theta) &\, =\,  q_{ 0}(\theta) + \delta \partial_{ \delta}q_{ \delta}\vert_{ \delta=0}(\theta) + O(\delta^{ 2}),
\end{align*}
where the error $O(\delta^{ 2})$ does not depend on $ \theta\in\bbT$. The fact that $r^{ \prime}(0)=0$ (Lemma~\ref{lem:derivative_r}) implies that $\partial_{ \delta}q_{ \delta}\vert_{ \delta=0}(\theta)$ only depends on the derivatives of $S_{ \delta}$ and $Z_{ \delta}$ w.r.t. $ \delta$, not w.r.t. $x$. Namely,
\begin{align*}
\partial_{ \delta}q^{ i}_{ \delta}\vert_{ \delta=0}(\theta)\, =\,  \frac{ \partial_{ \delta}S^{ i}_{ \delta}\vert_{ \delta=0}(\theta, 2Kr_{ 0})}{ \cZ_{ 0}(2Kr_{ 0})^{ 2}} - \frac{ \partial_{ \delta}Z^{ i}_{ \delta}\vert_{ \delta=0}(2Kr_{ 0})S^{ i}_{ 0}(\theta, 2Kr_{ 0})}{ \cZ_{ 0}(2Kr_{ 0})^{ 4}}\, .
\end{align*}
The expansion found in \eqref{eq:expansion_q_delta} is a simple consequence of \eqref{aux:derivative_S_delta}, \eqref{aux:derivative_Z_delta} and the expression of $\cZ_{ 0}$ in Section~\ref{sec:case_delta_0}.
\end{proof}

\begin{proof}[Proof of Lemma \ref{lem:exp proj p}]
In the case $ \delta=0$, the projection $\mathtt{p}_{ 0}$ defined in \eqref{eq:def p psi} is given by $P^{ 0}_{ 0}(u)= \mathtt{p}_{ 0}(u) (\partial_{ \theta}q_{ 0}, \ldots, \partial_{ \theta}q_{ 0})= \mathtt{p}_{ 0}(u) \partial_{ \theta}q_{ 0, nd}$. Since in this case, the operator $L_{ 0}= A$ defined in \eqref{eq:A} is essentially self-adjoint in $H_{ 1/q_{ 0}, d}^{ -1}$ (Proposition~\ref{prop:A_self_adjoint}), the projection $\mathtt{p}_{ 0}$ as a natural representation in terms of the scalar product $ \left\langle \cdot\, ,\, \cdot\right\rangle_{ -1, 1/q_{ 0}, d}$ associated to the norm defined in \eqref{eq:normHm1d}, namely
\begin{equation}
\mathtt{p}_{ 0}(u)\, =\,  \frac{ \left\langle \partial_{ \theta}q_{ 0, nd}\, ,\, u\right\rangle_{ -1, 1/q_{ 0}, d}}{ \left\Vert \partial_{ \theta} q_{ 0, nd} \right\Vert_{ -1, 1/q_{ 0}, d}^{ 2}}\, .
\end{equation}
Using the notations of Section~\ref{sec:appendix_rigged_spaces}, we deduce that 
\begin{align*}
\left\Vert \partial_{ \theta} q_{ 0, nd} \right\Vert_{ -1, 1/q_{ 0}, d}^{ 2}\, = \, \int_{0}^{2\pi} \frac{ \left(q_{ 0}(\theta) - \frac{ 2\pi}{ \cZ_{ 0}^{ 2}}\right)^{ 2}}{ q_{ 0}} \dd \theta \, =\,  1- \frac{ 4\pi^{ 2}}{ \cZ_{ 0}^{ 2}}\, ,
\end{align*}
and
\begin{align*}
\left\langle \partial_{ \theta}q_{ 0, nd}\, ,\, u\right\rangle_{ -1, 1/q_{ 0}, d}&=\,  \sum_{ k=-d}^{ d} \lambda^{ k} \int_{0}^{2\pi} \frac{ \cU^{ k}(\theta)\left(q_{ 0}(\theta) - \frac{ 2\pi}{ \cZ_{ 0}^{ 2}}\right)}{ q_{ 0}(\theta)} \dd \theta\\ &= \, \sum_{ k=-d}^{ d} \lambda^{ k} \int_{0}^{2\pi} \cU^{ k}(\theta)\left(1 - \frac{ 2\pi}{ \cZ_{ 0}^{ 2}q_{ 0}(\theta)}\right) \dd \theta\, ,
\end{align*}
which precisely gives the first order of \eqref{eq:expand_p_delta}. The validity of \eqref{eq:expand_p_delta} comes from the definition of the projection $P^{ 0}_{ \delta}$ in \eqref{eq:def p psi} and the fact that $L_{ \delta}$ is a relatively bounded perturbation of order $\gd$ of the operator $L_{ 0}= A$.
\end{proof}

\end{appendix}

\section*{Acknowledgments}

C.P. acknowledges the support of the ERC Advanced Grant ``Malady'' (246953).
We thank D. Bl\"omker and G. Giacomin for fruitful discussions and valuable advice.

\def\cprime{$'$}


\begin{thebibliography}{10}

\bibitem{Acebron2005}
{\sc J.~A. Acebr\'on, L.~L. Bonilla, C.~J. P\'erez~Vicente, F.~Ritort, and
  R.~Spigler}, {\em The {K}uramoto model: A simple paradigm for synchronization
  phenomena}, Rev. Mod. Phys., 77 (2005), pp.~137--185.
  
  
\bibitem{Antonopoulou}
{\sc D. Antonopoulou, D. Bl\"omker, and G. Karali},
{\em Front Motion in the One-Dimensional Stochastic Cahn-Hilliard Equation},
SIAM J. Math. Anal., 44(5) (2012), pp.~3242-3280. 
  

\bibitem{22657695}
{\sc J.~Baladron, D.~Fasoli, O.~Faugeras, and J.~Touboul}, {\em Mean-field
  description and propagation of chaos in networks of {H}odgkin-{H}uxley and
  {F}itz{H}ugh-{N}agumo neurons}, The Journal of Mathematical Neuroscience, 2
  (2012), p.~10.

\bibitem{Balmforth2000}
{\sc N.~J. Balmforth and R.~Sassi}, {\em A shocking display of synchrony},
  Physica D: Nonlinear Phenomena, 143 (2000), pp.~21--55.
  
  
\bibitem{cf:BBB}
{\sc L. Bertini, S. Brassesco and  P. Butt\`a}, \emph{Soft and hard wall in a stochastic reaction diffusion equation}, Arch. Ration. Mech. Anal. {\bf 190} (2008), pp.~307-345.
  

\bibitem{BGP}
{\sc L.~Bertini, G.~Giacomin, and K.~Pakdaman}, {\em Dynamical aspects of mean
  field plane rotators and the {K}uramoto model}, J. Statist. Phys., 138
  (2010), pp.~270--290.

\bibitem{Bertini:2013aa}
{\sc L.~Bertini, G.~Giacomin, and C.~Poquet}, {\em Synchronization and random
  long time dynamics for mean-field plane rotators}, Probability Theory and
  Related Fields,  (2013), pp.~1--61.

\bibitem{MR2731396}
{\sc F.~Bolley, A.~Guillin, and F.~Malrieu}, 
\newblock Trend to equilibrium and particle approximation for a weakly
  selfconsistent {V}lasov-{F}okker-{P}lanck equation.
\newblock {\em M2AN Math. Model. Numer. Anal.}, 44(5):867--884, 2010.

\bibitem{Bossy:2014fk}
{\sc M.~Bossy, O.~Faugeras, and D.~Talay}, {\em Clarification and complement to
  "Mean-field description and propagation of chaos in networks of
  Hodgkin-Huxley and FitzHugh-Nagumo neurons"}, arXiv:1412.7728, (2014).

\bibitem{MR1410117}
{\sc M.~Bossy and D.~Talay}, {\em Convergence rate for the approximation of the
  limit law of weakly interacting particles: application to the {B}urgers
  equation}, Ann. Appl. Probab., 6 (1996), pp.~818--861.

\bibitem{cf:BBjsp}
{\sc S. Brassesco and P. Butt\`a}, \emph{Interface fluctuations for the $d=1$ stochastic Ginzburg-Landau equation with non-symmetric reaction term}, J. Statist. Phys. {\bf 93} (1998), pp.~1111-1142.
  
  
\bibitem{cf:BDMP}
{\sc S. Brassesco, A. De Masi and E. Presutti}, \emph{Brownian fluctuations of the interface in the $d=1$ Ginzburg-Landau equation with noise}, Annal. Inst. H. Poincar\'e {\bf 31} (1995), 81-118.


\bibitem{MR697382}
{\sc H.~Brezis}, {\em Analyse fonctionnelle}, Collection Math{\'e}matiques
  Appliqu{\'e}es pour la Ma\^\i trise. [Collection of Applied Mathematics for
  the Master's Degree], Masson, Paris, 1983.
\newblock Th{\'e}orie et applications. [Theory and applications].

\bibitem{collet}
{\sc F.~Collet and P.~Dai Pra}, {\em The role of disorder in the dynamics of critical fluctuations of mean field models},
Electron. J. Probab., 17 (2012), no. 26: pp.~1--40.

\bibitem{dahms}
{\sc R. Dahms}, {\em Long time behavior of a spherical mean field model}, PhD Thesis, Technische Universit\"{a}t Berlin, 2002.


\bibitem{daiPra96}
{\sc P.~{Dai Pra} and F.~den Hollander}, {\em Mc{K}ean-{V}lasov limit for
  interacting random processes in random media}, J. Statist. Phys., 84 (1996),
  pp.~735--772.

  
\bibitem{DawGar}  
{\sc D. A.~Dawson and J.~G{\"a}rtner},
{\em Long time behaviour of interacting diffusions}, Stochastic calculus in application (Cambridge, 1987), 29–54, 
Pitman Res. Notes Math. Ser., 197, Longman Sci. Tech., Harlow, 1988.
  
\bibitem{Fernandez1997}
{\sc B.~Fernandez and S.~M{\'e}l{\'e}ard}, {\em A {H}ilbertian approach for
  fluctuations on the {M}c{K}ean-{V}lasov model}, Stochastic Process. Appl., 71
  (1997), pp.~33--53.

  
\bibitem{cf:Funaki} {\sc T.~Funaki}, \emph{The scaling limit for a stochastic PDE and the separation of phases}, Probab. Theory Relat. Fields {\bf 102} (1995), 221-288.
  
  
  
\bibitem{Gartner}
{\sc J.~G{\"a}rtner}, {\em On the {M}c{K}ean-{V}lasov limit for interacting
  diffusions}, Math. Nachr., 137 (1988), pp.~197--248.

\bibitem{MR3207725}
{\sc G.~Giacomin, E.~Lu{\c{c}}on, and C.~Poquet}, {\em Coherence {S}tability
  and {E}ffect of {R}andom {N}atural {F}requencies in {P}opulations of
  {C}oupled {O}scillators}, J. Dynam. Differential Equations, 26 (2014),
  pp.~333--367.

\bibitem{GPP2012}
{\sc G.~Giacomin, K.~Pakdaman, and X.~Pellegrin}, {\em Global attractor and
  asymptotic dynamics in the {K}uramoto model for coupled noisy phase
  oscillators}, Nonlinearity, 25 (2012), p.~1247.

\bibitem{doi:10.1137/110846452}
{\sc G.~Giacomin, K.~Pakdaman, X.~Pellegrin, and C.~Poquet}, {\em Transitions
  in active rotator systems: Invariant hyperbolic manifold approach}, SIAM
  Journal on Mathematical Analysis, 44 (2012), pp.~4165--4194.

\bibitem{Henry1981}
{\sc D.~Henry}, {\em Geometric theory of semilinear parabolic equations},
  vol.~840 of Lecture Notes in Mathematics, Springer-Verlag, Berlin, 1981.

\bibitem{Jourdain:2013aa}
{\sc B.~Jourdain and J.~Reygner},
\newblock Propagation of chaos for rank-based interacting diffusions and long
  time behaviour of a scalar quasilinear parabolic equation.
\newblock {\em Stochastic Partial Differential Equations: Analysis and
  Computations}, 1(3):455--506, 2013.


\bibitem{Kato1995}
{\sc T.~Kato}, {\em Perturbation theory for linear operators}, Classics in
  Mathematics, Springer-Verlag, Berlin, 1995.
\newblock Reprint of the 1980 edition.

\bibitem{Kuramoto1975}
{\sc Y.~Kuramoto}, {\em Self-entrainment of a population of coupled non-linear
  oscillators}, in International {S}ymposium on {M}athematical {P}roblems in
  {T}heoretical {P}hysics ({K}yoto {U}niv., {K}yoto, 1975), Springer, Berlin,
  1975, pp.~420--422. Lecture Notes in Phys., 39.

\bibitem{Lucon2011}
{\sc E.~Lu\c{c}on}, {\em Quenched limits and fluctuations of the empirical
  measure for plane rotators in random media.}, Electronic Journal of
  Probability, 16 (2011), pp.~792--829.

\bibitem{Luconthesis}
{\sc E.~Lu{\c c}on}, {\em Coupled oscillators, disorder and synchronization},
  PhD thesis, Universit{\'e} Pierre et Marie Curie,
  \url{http://tel.archives-ouvertes.fr/tel-00709998}, 2012.

\bibitem{Lucon20146372}
{\sc E.~Lu{\c c}on}, {\em Large time
  asymptotics for the fluctuation {SPDE} in the {K}uramoto synchronization
  model}, Journal of Functional Analysis, 266 (2014), pp.~6372 -- 6417.

\bibitem{Malrieu2003}
{\sc F.~Malrieu}, {\em Convergence to equilibrium for granular media equations
  and their {E}uler schemes}, Ann. Appl. Probab., 13 (2003), pp.~540--560.

\bibitem{McKean1967}
{\sc H.~P. McKean, Jr.}, {\em Propagation of chaos for a class of non-linear
  parabolic equations.}, in Stochastic {D}ifferential {E}quations ({L}ecture
  {S}eries in {D}ifferential {E}quations, {S}ession 7, {C}atholic {U}niv.,
  1967), Air Force Office Sci. Res., Arlington, Va., 1967, pp.~41--57.

\bibitem{Oelsch1984}
{\sc K.~Oelschl{\"a}ger}, {\em A martingale approach to the law of large
  numbers for weakly interacting stochastic processes}, Ann. Probab., 12
  (1984), pp.~458--479.

\bibitem{MR876258}
{\sc K.~Oelschl{{\"a}}ger}, {\em A fluctuation theorem for moderately
  interacting diffusion processes}, Probab. Theory Related Fields, 74 (1987),
  pp.~591--616.

  
\bibitem{cf:OV} {\sc E. Olivieri and M. E. Vares}, {Large Deviations and Metastability}, Encyclopedia of Mathematics and its Applications {\bf 100},
 Cambridge University Press, 2005.
  
  
\bibitem{Pazy1983}
{\sc A.~Pazy}, {\em Semigroups of linear operators and applications to partial
  differential equations}, vol.~44 of Applied Mathematical Sciences,
  Springer-Verlag, New York, 1983.

\bibitem{Pearce}
{\sc P.~A. Pearce}, {\em Mean-field bounds on the magnetization for
  ferromagnetic spin models}, J. Statist. Phys., 25 (1981), pp.~309--320.

  
\bibitem{cf:Errico}
 {\sc E.~Presutti}, \emph{Scaling limits in statistical mechanics and microstructures in continuum mechanics}, Theoretical and Mathematical Physics, Springer 2009.   
  
  
\bibitem{Reygner:2014ta}
{\sc J.~Reygner}, 
\newblock Chaoticity of the stationary distribution of rank-based interacting
  diffusions.
\newblock arXiv:1408.4103, 2014.


\bibitem{Sakaguchi1988}
{\sc H.~Sakaguchi}, {\em Cooperative phenomena in coupled oscillator systems
  under external fields}, Progr. Theoret. Phys., 79 (1988), pp.~39--46.

\bibitem{Strogatz2000}
{\sc S.~H. Strogatz}, {\em From {K}uramoto to {C}rawford: exploring the onset
  of synchronization in populations of coupled oscillators}, Phys. D, 143
  (2000), pp.~1--20.
\newblock Bifurcations, patterns and symmetry.

\bibitem{StrookVaradhan1979}
{\sc D.~W. Stroock and S.~R.~S. Varadhan}, {\em Multidimensional diffusion
  processes}, Classics in Mathematics, Springer-Verlag, Berlin, 2006.
\newblock Reprint of the 1997 edition.

\bibitem{MR865013}
{\sc A.-S. Sznitman}, {\em A fluctuation result for nonlinear diffusions}, in
  Infinite-dimensional analysis and stochastic processes, S.~Albeverio, ed.,
  vol.~124 of Research Notes in Mathematics, Pitman (Advanced Publishing
  Program), Boston, MA, 1985, pp.~145--160.
\newblock Papers from the meeting held at the University of Bielefeld,
  Bielefeld, 1983.

\bibitem{MR617913}
{\sc K.~Yosida}, {\em Functional analysis}, vol.~123 of Grundlehren der
  Mathematischen Wissenschaften [Fundamental Principles of Mathematical
  Sciences], Springer-Verlag, Berlin-New York, sixth~ed., 1980.

\end{thebibliography}
\end{document}